\DeclareSymbolFont{extraup}{U}{zavm}{m}{n}
\DeclareMathSymbol{\varheart}{\mathalpha}{extraup}{86}
\newcommand{\R}             {\mathbb R} 
\newcommand{\N}             {\mathbb N}
\newcommand{\Z}             {\mathbb Z}
\newcommand{\Q}             {\mathbb Q}
\newcommand{\cat}[1]{\pmb{\mathrm{#1}}}
\renewcommand{\mathbf}[1]{\ensuremath{\pmb{\mathrm{#1}}}}
\DeclareDocumentCommand{\Ddeath}{ g }{
  \ensuremath{
    \mathrm{D}^{\dagger\IfNoValueF{#1}{,#1}}
  }
}
\DeclareDocumentCommand{\Bdeath}{ g }{
  \ensuremath{
    \mathbf{B}_{\IfNoValueF{#1}{#1}}^{\dagger}
  }
}
\DeclareDocumentCommand{\Brigdeath}{ m g }{
  \ensuremath{
    \mathbf{B}_{\mathrm{rig}, #1}^{\dagger\IfNoValueF{#2}{,#2}}
  }
}
\DeclareDocumentCommand{\Drigdeath}{ g }{
  \ensuremath{
    \mathrm{D}_{\mathrm{rig}}^{\dagger\IfNoValueF{#1}{,#1}}
  }
}
\DeclareDocumentCommand{\Ars}{ g g }{
  \ensuremath{
    \mathbf{A}_{\IfNoValueF{#1}{#1}}^{\IfNoValueF{#2}{#2}}
  }
}
\DeclareDocumentCommand{\Brs}{ g g }{
  \ensuremath{
    \mathbf{B}_{\IfNoValueF{#1}{#1}}^{\IfNoValueF{#2}{#2}}
  }
}
\DeclareDocumentCommand{\Drs}{ g }{
  \ensuremath{
    \mathrm{D}^{\IfNoValueF{#1}{#1}}
  }
}
\tikzset{sra/.code=
  {\pgfkeysalso{
      /tikz/decorate,
      /tikz/decoration={snake,amplitude=1pt,
        segment length=5pt, pre=lineto,
        pre length=4pt, post=lineto, post length=8pt}
    }}
}
\newcommand*{\ra}[1][]{\raisebox{-1pt}{\begin{tikzpicture}%
        \draw[xscale=1,thin,shorten >=3pt, >=stealth, ->,font=\scriptsize] (0,0)%
                 node{\hspace*{-2pt}} -- (0.5,0) node[above] {#1}%
                  -- node{} (1,0);\end{tikzpicture}}\penalty1000\relax}
\newcommand*{\sra}[1][]{\raisebox{-1pt}{\begin{tikzpicture}%
      \draw[->, 
      xscale=1,thin,shorten >=3pt, >=stealth, font=\scriptsize, sra](0,0)%
                 node{\hspace*{-2pt}} -- (0.5,0) node[above] {#1}%
                  -- node{} (1,0);\end{tikzpicture}}\penalty1000\relax}
\renewcommand*{\mapsto}{\raisebox{-1pt}{\begin{tikzpicture}%
       \draw[xscale=1,thin,shorten >=3pt, >=stealth, |->] (0,0)%
                node{\hspace*{0pt}}%
                -- node{} (1,0);\end{tikzpicture}}\penalty1000\relax}
\newcommand*{\inj}[1][]{\raisebox{-1pt}{\begin{tikzpicture}%
        \draw[xscale=1,thin,shorten >=3pt, >=stealth, right hook->,%
                 font=\scriptsize] (0,0)%
                 node{\hspace*{0pt}} -- (0.5,0) node[above] {#1} --%
                 node{} (1,0);\end{tikzpicture}}\penalty1000\relax}
\newcommand*{\surj}[1][]{\raisebox{-1pt}{\begin{tikzpicture}%
       \draw[xscale=1,thin,shorten >=3pt, >=stealth, ->>,font=\scriptsize] (0,0)%
                node{\hspace*{0pt}} -- (0.5,0) node[above] {#1}%
                -- node{} (1,0);\end{tikzpicture}}\penalty1000\relax}
\newcommand*{\isom}{\raisebox{-1pt}{\begin{tikzpicture}%
       \draw[xscale=1,thin,shorten >=3pt, >=stealth, ->] %
                (0,0) node{\hspace*{0pt}} -- node{} (1,0);%
                \draw[xscale=1] (0.4,0.1) node {$\sim$};\end{tikzpicture}}%
        \penalty1000\relax}
\newcommand*{\lra}[1][]{\raisebox{-1pt}{\begin{tikzpicture}%
       \draw[xscale=1,thin,shorten >=3pt, >=stealth, <->,font=\scriptsize] (0,0) %
                node{\hspace*{-2pt}} -- (0.5,0) node[above] {#1} --%
                node{} (1,0);\end{tikzpicture}}\penalty1000\relax}
\newcommand{\osetwv}[2]{
  {\mathop{#2}\limits^{\vbox to -4\ex@{\kern-\tw@\ex@
   \hbox{\hspace{-0.2em}\scriptsize #1}\vss}}}}
\newcommand{\oset}[2]{
  {\mathop{#2}\limits^{\vbox to -5\ex@{\kern-\tw@\ex@
   \hbox{\hspace{-0.25em}\scriptsize$#1$}\vss}}}}
\tikzset{commutative diagrams/.cd,
                     arrow style=tikz,
                     diagrams={>=stealth', line width=0.5pt}}
\DeclareRobustCommand{\gobblefive}[5]{}
\DeclarePairedDelimiter\abs{\lvert}{\rvert}%
\DeclarePairedDelimiter\norm{\lVert}{\rVert}%
\DeclarePairedDelimiter{\ceil}{\lceil}{\rceil}
\let\oldabs\abs
\def\abs{\@ifstar{\oldabs}{\oldabs*}}
\let\oldnorm\norm
\def\norm{\@ifstar{\oldnorm}{\oldnorm*}}
\DeclareMathOperator{\Hom}{Hom}
\DeclareMathOperator{\Ext}{Ext}
\DeclareMathOperator{\Ind}{Ind}
\DeclareMathOperator{\image}{im}
\DeclareMathOperator{\colim}{colim}
\DeclareMathOperator{\id}{id}
\DeclareMathOperator{\ext}{ext}
\DeclareMathOperator{\sgn}{sgn}
\DeclareMathOperator{\shuffle}{shuffle}
\DeclareMathOperator{\total}{tot}
\newcommand{\forget }{\cat{\text{\textquestiondown}}}
\newcommand{\mbof}[2]{\overset{\scriptscriptstyle #2}{#1}}
\crefname{none}{}{}
\crefname{(none)}{}{}
\crefname{diagram}{diagram}{diagrams}
\newtheoremstyle{mythm}
  {}
  {}
  {\itshape}
  {}
  {\bfseries\scshape}
  {.}
  { }
  {}
\newtheoremstyle{myprop}
  {}
  {}
  {\itshape}
  {}
  {\bfseries}
  {.}
  { }
  {}
\newtheoremstyle{myrmk}
  {}
  {}
  {}
  {}
  {\bfseries}
  {.}
  { }
  {}
\theoremstyle{mythm}
\newtheorem{theorem}{Theorem}[section]
\newtheorem*{theorem*}{Theorem}
\theoremstyle{myprop}
\newtheorem{lemma}[theorem]{Lemma}
\newtheorem{corollary}[theorem]{Corollary}
\newtheorem{proposition}[theorem]{Proposition}
\newtheorem{prototheorem}[theorem]{Prototheorem}
\theoremstyle{myrmk}
\newtheorem{remark}[theorem]{Remark}
\newtheorem{example}[theorem]{Example}
\newtheorem{definition}[theorem]{Definition}
\newcommand{\rom}[1]{\uppercase\expandafter{\romannumeral #1\relax}}
\author{Oliver Thomas}
\address{Mathematisches Institut, Im Neuenheimer Feld 205, D-69120 Heidelberg}
\email{othomas@mathi.uni-heidelberg.de}
\title{Cohomology of Topologised Monoids}
\begin{document}
\renewcommand{\ref}[1]{\cref{#1}}
\begin{abstract}
  We prove standard results of group cohomology -- namely, existence of a long
  exact sequence, classification of torsors via the first cohomology group,
  Shapiro's lemma, the Hochschild-Serre spectral sequence, a decomposition of
  the cochain complex in the direct product case, and Jannsen's result on the
  recovery problem -- for cohomology theories such as continuous, analytic,
  bounded, and pro-analytic cohomology. We also prove these results for certain
  monoids.

  The cohomology groups considered here all have very concrete interpretations
  by means of a cochain complex. Therefore, we do not use methods of homological
  algebra, but explicit calculations on the level of cochains, using techniques
  dating back to Hochschild and Serre.
\end{abstract}
\maketitle
\tableofcontents
\section*{Introduction}

There are numerous variants of group cohomology defined via cochains: They can
be assumed to be continuous, analytic, bounded etc. In these cases, they however
lose their functorial properties: The obvious coefficient categories rarely
admit quotient objects. The most elegant way of fixing this issue is to admit a
larger category of coefficients by taking a sheaf-theoretic point of view
(cf.~\cite{MR2422342}). For many applications though, this point of view simply
shifts the issue: Considering only cohomology groups defined via cochains, it is
unclear whether something like a Hochschild-Serre spectral sequence exists. If
we admit a larger category, the existence of such a spectral sequence is clear,
but it is unclear what the objects appearing in said spectral sequence look
like.

Sometimes, these objects are however of importance. In this article we try to
push the cochain point of view as far as we can. On the one hand, this means that
the results we will prove hold for all aforementioned topologised group
cohomology theories. On the other hand, this means that we generally follow the
\emph{direct method} of Hochschild and Serre without appealing to homological
arguments. In contrast to arguments from homological algebra, ours will be
ad-hoc, combinatorial in nature and very calculation intensive.

While the direct method is ridiculously flexible, it is quite hard to present
one streamlined proof that shows off many of its features. For this reason, we
will introduce an axiomatic framework that allows us to deal with all of
these variants in one go.

One application we have in mind is analytic cohomology of
$(\varphi,\Gamma)$-modules in the sense of \cite{MR3444228}. For this reason, we
are strictly speaking not setting up a framework for topologised groups, but for
topologised monoids. This presents additional difficulties. 

\subsection*{Organisation}

We start by introducing a framework of \emph{topologised categories} and how to
define group (and monoid) actions for them in \ref{sec:topological-categories,sec:topologised-groups-and-monoids,sec:rigidified-g-modules,sec:induced-module}. While the examples we have in mind
are very concrete, the framework itself is rather abstract -- especially for the
later topic of rigidifications. We encourage the reader to mainly think about
the examples listed in \ref{ex-tbl:rigidifications}. After a brief discussion of
abstract monoid cohomology in \ref{sec:abstract-monoid-cohomology}, we fix the setup for
the following sections in \ref{sec:setup-topological-cochains}. Afterwards we
show how both the existence of a long exact sequence and the classification of
torsors hold in our setting
(cf.~\ref{sec:cohomology-of-topologised-monoids,sec:cohomology-and-extensions}).
The first glimpse at how much we should appreciate arguments of homological
algebra is then given in \ref{sec:shapiro-top-grps}, where we prove Shapiro's
lemma for topologised groups. (Shapiro's lemma for topologised monoids will have
to wait until \ref{sec:shapiro-for-monoids}.) One of the main motivations to
introduce the framework however was the spectral sequence of Hochschild and
Serre. In \ref{sec:hochschild-serre} we follow the arguments of Hochschild's and 
Serre's original article to prove it in our setting. As for applications not
just the existence of the spectral sequence, but also the existence of a
quasi-isomorphism of complexes is of interest, we also prove a corresponding
result in \ref{sec:double-complex}.

\subsection*{Acknowledgements}

This article is based upon parts of its author's PhD thesis
(cf.~\cite{thomas:on-analytic-and-iwasawa-cohomology}). We express our sincerest
gratitude to Otmar Venjakob for many fruitful discussions.

\section{Topological Categories}
\label{sec:topological-categories}
There are a number of notions of topological categories in the literature, none
of which is standard. For our purposes it is sufficient to have a good notion of
discrete spaces.
\begin{definition}
  A \emph{concrete category} is a faithful functor
  $\forget\colon\cat{C}\ra\cat{Set}$. One often only says that a category
  $\cat{C}$ is a concrete category, even though the forgetful functor $\forget$
  is an essential part of the datum.
\end{definition}
\begin{definition}
  A concrete category $\forget \colon\cat{C}\ra\cat{Set}$ is called a
  \emph{category admitting discrete objects}, if $\cat{C}$ has finite
  limits and $\forget$ admits a fully faithful left adjoint $\cat{F}.$
 
  We will denote by \[\cat{C}^\delta = \{X\in \cat{C}\mid
    \forget(\Hom_{\cat{C}}(X,-))=\Hom_{\cat{Set}}(\forget X,\forget (-))\}\] the
  \emph{discrete objects} in $\cat{C}$. This terminology is justified as all
  objects in $\cat{Set}$ give rise to discrete objects,
  cf.~\ref{prop:discrete-objs-are-sets}. By abuse of notation, we will often
  only say that a category admits discrete objects without specifying the
  forgetful functor.
\end{definition}

We will always denote by $\bullet$ a singleton set.

\begin{remark}
  Note that for a category admitting discrete objects, the forgetful functor is
  represented by $\cat{F}\bullet$.
\end{remark}
\begin{proposition}
  \label{prop:cat-w-discrete-objs-has-right-inverse-to-forgetting}
  Let $\cat{C}$ be a category admitting discrete objects. Then
  $\forget\circ\cat{F}\simeq \id_{\cat{Set}}.$
  \begin{proof}
    The isomorphism $\Hom_{\cat{Set}}(X,Y)\isom \Hom_{\cat{C}}(\cat{F}X,\cat{F}Y) =
    \Hom_{\cat{Set}}(X,\forget \cat{F}Y)$ shows that
    $Y\cong\forget\cat{F}Y$.
  \end{proof}
\end{proposition}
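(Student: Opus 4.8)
The plan is to read the statement off from the Yoneda lemma. By hypothesis $\cat{F}$ is left adjoint to $\forget$, so for every set $X$ and every object $Y$ of $\cat{C}$ there is an adjunction isomorphism $\Hom_{\cat{C}}(\cat{F}X,Y)\cong\Hom_{\cat{Set}}(X,\forget Y)$, natural in $X$ and $Y$. Specialising to $Y=\cat{F}Z$ for a set $Z$ and using that $\cat{F}$ is fully faithful — so that $f\mapsto\cat{F}f$ is a bijection $\Hom_{\cat{Set}}(X,Z)\to\Hom_{\cat{C}}(\cat{F}X,\cat{F}Z)$ — we obtain a composite isomorphism
\[
  \Hom_{\cat{Set}}(X,Z)\;\xrightarrow{\ \sim\ }\;\Hom_{\cat{C}}(\cat{F}X,\cat{F}Z)\;\xrightarrow{\ \sim\ }\;\Hom_{\cat{Set}}(X,\forget\cat{F}Z),
\]
natural in $X$ (and in $Z$).

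First I would fix $Z$ and observe that the displayed composite is an isomorphism of presheaves $\Hom_{\cat{Set}}(-,Z)\cong\Hom_{\cat{Set}}(-,\forget\cat{F}Z)$ on $\cat{Set}$; by the Yoneda lemma it is therefore induced by a unique isomorphism $\eta_Z\colon Z\xrightarrow{\ \sim\ }\forget\cat{F}Z$ (which is just the unit of the adjunction evaluated at $Z$). Then I would let $Z$ vary: naturality of the two isomorphisms above in $Z$ shows that the $\eta_Z$ assemble into a natural transformation, which is then automatically a natural isomorphism $\id_{\cat{Set}}\simeq\forget\circ\cat{F}$. Equivalently, one may simply invoke the standard fact that a left adjoint is fully faithful precisely when its unit is a natural isomorphism; the argument above is just a direct proof of that fact in the case at hand. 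Note that finite limits in $\cat{C}$ play no role here.

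There is no serious obstacle; the only thing requiring care is the bookkeeping of naturality. One must check that the composite isomorphism is natural in $Z$ as well as in $X$ — this is where full faithfulness of $\cat{F}$, rather than mere faithfulness, enters — so that the Yoneda argument yields a natural isomorphism of endofunctors of $\cat{Set}$ and not merely an unrelated family of bijections. Unwinding the definitions of the adjunction isomorphism and of the map $f\mapsto\cat{F}f$ makes this immediate.
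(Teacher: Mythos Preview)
Your proof is correct and follows exactly the same route as the paper: compose the full faithfulness isomorphism $\Hom_{\cat{Set}}(X,Z)\cong\Hom_{\cat{C}}(\cat{F}X,\cat{F}Z)$ with the adjunction isomorphism and read off $Z\cong\forget\cat{F}Z$ via Yoneda. The only difference is that you spell out the naturality bookkeeping and identify $\eta_Z$ as the unit, whereas the paper compresses the whole argument into a single line and leaves naturality implicit.
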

\begin{proposition}
  \label{prop:cat-w-discrete-objs-forgets-monos-to-monos}
 Let $\cat{C}$ be a category admitting discrete objects. Then
 $\forget $ maps monomorphisms to monomorphisms.
 \begin{proof}
   Let $X\ra Y$ be a monomorphism in $\cat{C}.$ It suffices to show that for
   $\alpha\neq\beta\in\Hom_{\cat{Set}}(\bullet, \forget X)$ also
   their induced maps in $\Hom_{\cat{Set}}(\bullet, \forget Y)$
   differ. Assume they did not.
   $\alpha$ and $\beta$ correspond to $\alpha',
   \beta'\in\Hom_{\cat{C}}(\cat{F}\bullet, X)$. As $\forget\cat{
     F}\simeq \id_{\cat{Set}}$ by \cref{prop:cat-w-discrete-objs-has-right-inverse-to-forgetting}, this implies
   that \[\forget \left( \cat{F}\bullet \oset{\alpha'}{\ra} X \ra
     Y\right) = \forget \left( \cat{F}\bullet\oset{\beta'}{\ra} X
   \ra Y\right),\] so $\alpha'=\beta'$ as $X\ra Y$ was assumed to be mono.
 \end{proof}
\end{proposition}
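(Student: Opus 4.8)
The plan is to observe that $\forget$ is a right adjoint — it is right adjoint to $\cat{F}$ — and that right adjoints preserve all limits that exist in the source category. Since $\cat{C}$ is assumed to have finite limits, $\forget$ in particular preserves fibre products (and, trivially, isomorphisms), and I would exploit the well-known characterisation of monomorphisms by a limit condition.

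Recall that, in any category in which the fibre product $X\times_Y X$ exists, a morphism $m\colon X\ra Y$ is a monomorphism if and only if the diagonal $\Delta_m\colon X\ra X\times_Y X$ — the unique map whose two components are both $\id_X$ — is an isomorphism. Given a monomorphism $m$ in $\cat{C}$, I would then apply $\forget$: it carries the pullback $X\times_Y X$ to a pullback $\forget X\times_{\forget Y}\forget X$ in $\cat{Set}$ and carries the isomorphism $\Delta_m$ to an isomorphism, which is visibly the diagonal associated with $\forget m$. By the same criterion, now read inside $\cat{Set}$, this is exactly the statement that $\forget m$ is a monomorphism (equivalently, an injection).

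An alternative, more in keeping with the concrete style of this article, is to argue elementwise using that $\forget$ is represented by $\cat{F}\bullet$ (as recorded in the Remark above): for every object $Z$ one has a bijection $\forget Z\cong\Hom_{\cat{C}}(\cat{F}\bullet,Z)$, natural in $Z$, coming from the adjunction together with the fact that $\bullet$ is a singleton. Under this identification $\forget m$ corresponds to post-composition with $m$, which is injective because $m$ is a monomorphism; hence $\forget m$ is injective. This amounts to testing monicity of $\forget m$ against the single object $\cat{F}\bullet$.

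I do not anticipate any real obstacle. The only points that want a little care are that the fibre product $X\times_Y X$ must be taken inside $\cat{C}$ — which is precisely what the finite-limits hypothesis guarantees — and that one uses naturality of the adjunction bijection, so that $\forget$ applied to a morphism becomes post-composition; everything else is formal.
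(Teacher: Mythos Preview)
Your second, elementwise approach is precisely the paper's proof: the paper tests $\forget m$ against maps from $\bullet$, transports them across the adjunction to maps $\cat{F}\bullet\ra X$, and uses naturality plus the monomorphism hypothesis to derive a contradiction. Your first approach via preservation of limits by the right adjoint $\forget$ is a correct and genuinely different route: it trades the explicit element-chase for the standard criterion that $m$ is mono iff $\Delta_m\colon X\ra X\times_Y X$ is an isomorphism, together with the fact that right adjoints preserve pullbacks and isomorphisms. This is more conceptual and arguably cleaner, and it makes the role of the finite-limits hypothesis on $\cat{C}$ transparent; the paper's approach, by contrast, never invokes that hypothesis and works as soon as $\forget$ is faithful and representable (which follows from the adjunction alone), so it is slightly more economical in its assumptions even if less slick.
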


\begin{proposition}
  \label{prop:subobjs-of-discretes-are-discrete}
  Subobjects of discrete objects are discrete.
  \begin{proof}
    Let $X$ be a discrete object in a category admitting discrete objects $\cat{C}$,
    $D$ a subobject and $Y$ an arbitrary object in $\cat{C}$. We have the
    following commutative diagram:
    \[\begin{tikzcd}
        \Hom_{\cat{C}}(X,Y) \arrow{r}{\cong}\arrow{d} &
        \Hom_{\cat{Set}}(\forget  X, \forget  Y)\arrow{d}\\
        \Hom_{\cat{C}}(D,Y) \arrow[hook]{r}{\forget } &
        \Hom_{\cat{Set}}(\forget  D, \forget  Y)
      \end{tikzcd}\]
   As $\forget D\ra\forget X$ is again a mono by \cref{prop:cat-w-discrete-objs-forgets-monos-to-monos}, the
   map on the right is surjective and hence so is the bottom one, i.\,e., $D$ is discrete. 
  \end{proof}
\end{proposition}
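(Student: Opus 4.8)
The plan is to unwind the definition of $\cat{C}^\delta$ and reduce everything to a short diagram chase, in the same spirit as the proof of \cref{prop:cat-w-discrete-objs-forgets-monos-to-monos}. Let $D\hookrightarrow X$ be the given subobject, with $X$ discrete, and fix an arbitrary object $Y$ of $\cat{C}$. I have to show that the comparison map $\Hom_{\cat{C}}(D,Y)\to\Hom_{\cat{Set}}(\forget D,\forget Y)$ is a bijection. Injectivity is automatic since $\forget$ is faithful, so the whole issue is surjectivity.

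To get surjectivity I would write down the commutative square
\[\begin{tikzcd}
  \Hom_{\cat{C}}(X,Y)\arrow{r}{\cong}\arrow{d} & \Hom_{\cat{Set}}(\forget X,\forget Y)\arrow{d}\\
  \Hom_{\cat{C}}(D,Y)\arrow{r} & \Hom_{\cat{Set}}(\forget D,\forget Y)
\end{tikzcd}\]
whose horizontal maps are the comparison maps and whose vertical maps are restriction along $D\hookrightarrow X$, respectively along $\forget D\to\forget X$. The top map is an isomorphism exactly because $X\in\cat{C}^\delta$. By \cref{prop:cat-w-discrete-objs-forgets-monos-to-monos}, $\forget D\to\forget X$ is a monomorphism in $\cat{Set}$, hence restriction along it is surjective, so the right-hand vertical map is surjective. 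Chasing the square, the composite top-then-right is surjective, therefore the composite left-then-bottom is surjective, and therefore the bottom comparison map is surjective. As $Y$ was arbitrary, this shows $D\in\cat{C}^\delta$.

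I do not expect a real obstacle here: once \cref{prop:cat-w-discrete-objs-forgets-monos-to-monos} is in hand the argument is purely formal, the only concrete input being the elementary fact that precomposition with an injection of sets induces a surjection of the relevant hom-sets. The closest thing to a subtlety is that discreteness is a condition quantified over all test objects $Y$, so the diagram chase must be carried out with $Y$ arbitrary; this is precisely why \cref{prop:cat-w-discrete-objs-forgets-monos-to-monos} — and not any special feature of $D$ itself — is the decisive ingredient.
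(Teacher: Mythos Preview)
Your proposal is correct and follows essentially the same argument as the paper: the identical commutative square, the appeal to \cref{prop:cat-w-discrete-objs-forgets-monos-to-monos} to make the right vertical map surjective, and the resulting surjectivity of the bottom comparison map. The only difference is that you spell out the faithfulness-gives-injectivity remark and the diagram chase a bit more explicitly.
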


\begin{proposition}
  \label{prop:discrete-objs-are-sets}
  Let $\cat{C}$ be a category admitting discrete objects. Then $\cat{F}S$ is
  discrete for every set $S$. $\cat{F}$ is essentially
  surjective onto the discrete objects, so
  $\cat{F}\colon\cat{Set}\ra\cat{C}^\delta$ is an equivalence of categories.
  \begin{proof}
    Let $S$ be a set. Then
    \[\Hom_{\cat{C}}(\cat{F}S, Y) \oset{\forget }{\ra} \Hom_{\cat{Set}}(\forget\cat{
      F}S, \forget  Y) = \Hom_{\cat{C}}(\cat{F}\forget\cat{
      F}S, Y)=\Hom_{\cat{C}}(\cat{F}S,Y),\] and the identifications imply that
    the first map is an isomorphism.

    On the other hand, if $X$ is discrete, i.\,e., if
    \[\Hom_{\cat{C}}(X,Y)=\Hom_{\cat{Set}}(\forget X,\forget 
    Y) = \Hom_{\cat{C}}(\cat{F}\forget X, Y) \text{ for all } Y,\] then
    Yoneda implies $X\cong\cat{F}\forget X,$ so $\cat{F}$ is
    essentially surjective onto discrete objects.
  \end{proof}
\end{proposition}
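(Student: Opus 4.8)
The plan is to prove the two assertions separately, each time reducing to the adjunction $\cat{F}\dashv\forget$. The only extra ingredient I need is that, since $\cat{F}$ is fully faithful, the unit $\eta\colon\id_{\cat{Set}}\Rightarrow\forget\cat{F}$ of this adjunction is a natural isomorphism; this sharpens \cref{prop:cat-w-discrete-objs-has-right-inverse-to-forgetting} and can equally be read off from its proof.

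First I would show that $\cat{F}S$ is discrete for every set $S$, that is, that the map $\forget\colon\Hom_{\cat{C}}(\cat{F}S,Y)\ra\Hom_{\cat{Set}}(\forget\cat{F}S,\forget Y)$ is a bijection for every $Y\in\cat{C}$. The adjunction supplies a bijection $\Phi\colon\Hom_{\cat{C}}(\cat{F}S,Y)\isom\Hom_{\cat{Set}}(S,\forget Y)$ sending $f$ to $\forget(f)\circ\eta_S$, and unwinding this formula shows that the map induced by $\forget$ is nothing but $\Phi$ followed by the bijection $\Hom_{\cat{Set}}(S,\forget Y)\isom\Hom_{\cat{Set}}(\forget\cat{F}S,\forget Y)$ given by precomposition with $\eta_S^{-1}$. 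A composite of bijections is a bijection, so $\cat{F}S\in\cat{C}^\delta$.

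Next I would prove that $\cat{F}$ is essentially surjective onto $\cat{C}^\delta$. Given a discrete object $X$, discreteness says that the map $\Hom_{\cat{C}}(X,Y)\ra\Hom_{\cat{Set}}(\forget X,\forget Y)$ induced by $\forget$ is a bijection, and it is visibly natural in $Y$; the adjunction supplies a second natural bijection $\Hom_{\cat{Set}}(\forget X,\forget Y)\isom\Hom_{\cat{C}}(\cat{F}\forget X,Y)$. Composing these, the functors $\Hom_{\cat{C}}(X,-)$ and $\Hom_{\cat{C}}(\cat{F}\forget X,-)$ are naturally isomorphic, so the Yoneda lemma yields $X\cong\cat{F}\forget X$. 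Together with the first part this concludes the proof: $\cat{F}$ factors through the full subcategory $\cat{C}^\delta$, is still fully faithful as a functor into it, and is essentially surjective, hence is an equivalence $\cat{F}\colon\cat{Set}\ra\cat{C}^\delta$.

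There is no real obstacle here; all the effort is bookkeeping around the adjunction. The two points deserving (routine) care are: checking that the explicitly written isomorphisms above really are the maps \emph{induced by} $\forget$, so that discreteness holds on the nose rather than merely up to an abstract identification; and checking that the isomorphism fed into the Yoneda lemma is natural in $Y$. Both follow by unwinding the definition of the adjunction bijection together with the naturality of the unit (and, for the second point, of the counit).
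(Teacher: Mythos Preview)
Your proposal is correct and follows essentially the same approach as the paper: both parts reduce to the adjunction $\cat{F}\dashv\forget$ together with the fact that the unit is an isomorphism, and the second part concludes via Yoneda. Your version is more explicit about why the displayed composite really is the map induced by $\forget$ (rather than just some bijection), which the paper leaves implicit, but this is added care rather than a different argument.
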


\begin{example}
  Categories admitting discrete objects don't quite behave like topological
  spaces, as for example singleton objects need not be isomorphic. Consider the following category $\cat{C}$ where the objects are tuples
  $(A,\tau_A)$ with $A$ any set and $\tau_A$ \emph{any} subset of the power set $2^A$
  of $A$. A morphism $(A,\tau_A)\ra(B,\tau_B)$ in $\cat{C}$ is defined as a map
  $f\colon A\ra B$ subject to the condition that for $b\in\tau_B$,
  $f^{-1}(b)\in\tau_A.$
  
  The forgetful functor has the obvious left adjoint $A\mapsto (A,2^A)$, but
  singleton sets need not be isomorphic:
  $(\bullet,\varnothing)\not\cong(\bullet, 2^\bullet)$ and only the latter
  object is discrete while only $(\bullet,\varnothing)$ is final in $\cat{C}.$
\end{example}

\begin{definition}
  We say that a category $\cat{C}$ admitting discrete objects is \emph{topological}, if
  the functor $\cat{F}\colon \cat{S}\ra\cat{C}$ commutes with finite limits and
  if for every discrete object $D$ and all objects $X,Y$ the natural map \[
    \Hom_{\cat{C}}(D\times X,Y) \ra \Hom_{\cat{Set}}(\forget D,
    \Hom_{\cat{C}}(X,Y))\] is a bijection.
\end{definition}
\begin{remark}
  If $\cat{F}$ commutes with finite limits, it especially maps a final object to
  a final object.
\end{remark}
\begin{remark}
  \label{rmk:internal-hom-in-cgwh-and-discrete-objects}
  The isomorphism
\[
    \Hom_{\cat{C}}(D\times X,Y) \ra \Hom_{\cat{Set}}(\forget D,
    \Hom_{\cat{C}}(X,Y))\] replaces some kind of internal Hom-functor:
  In the category of compactly generated weakly Hausdorff spaces admits we endow
  for spaces $X,Y$ the set $\Hom_{\cat{CGWH}}(X,Y)$ with the compact open
  topology and call the resulting object $[X,Y]$. This results in a pair of
  adjoint functors: \[\Hom_{\cat{CGWH}}(Z\times X,Y)\cong
    \Hom_{\cat{CGWH}}(Z,[X,Y]).\] If $Z$ is furthermore discrete, this reads as
\[\Hom_{\cat{CGWH}}(Z\times X,Y)\cong
    \Hom_{\cat{CGWH}}(Z,[X,Y]) = \Hom_{\cat{Set}}(\forget Z,
    \Hom_{\cat{CGWH}}(X,Y)),\] which is precisely the second requirement we
  posed for a category admitting discrete objects to be topological. 
\end{remark}

\begin{example}
  Examples of topological categories are: the category of topological spaces, of
  Hausdorff topological spaces, of metric spaces -- all with continuous maps.
  The category of analytic manifolds (over some arbitrary base) is also
  topological. In all cases, $\forget$ is the obvious forgetful functor to
  $\cat{Set}$ and $\cat{F}$ maps a set to the same set with the discrete
  topology. Here we regard discrete sets as zero-dimensional manifolds.
\end{example}

\begin{proposition}
  \label{prop:top-cat-has-const-morph}
  In a topological category, every constant map of sets lifts to a morphism.
  \begin{proof}
    A constant map of sets factors as \[
      \forget X\ra\bullet\ra\forget  Y.\] As
    $\cat{F}\bullet$ is terminal in a topological category, this factorisation
    lifts to morphisms in the topological category.
  \end{proof}
\end{proposition}

\section{Topologised Groups and Monoids}
\label{sec:topologised-groups-and-monoids}
\begin{definition}
  A \emph{topologised group} is a group object in a topological category.
  Similarly, a \emph{topologised monoid} will mean a monoid object in a
  topological category. A morphism of topologised groups is a morphism
  $\phi\colon G\ra H$ in the ambient category such that the
  diagram \[\begin{tikzcd}
      G\times G\arrow{r}{\text{mul}}\arrow{d}{(\phi,\phi)} & G\arrow{d}{\phi}\\
      H\times H\arrow{r}{\text{mul}} & H
    \end{tikzcd}\]
  commutes. For a morphism of topologised monoids we furthermore require the
  commutativity of the following diagram: \[\begin{tikzcd} G\arrow{rr}{\phi} &
      & H\\ & 1 \arrow{ur}\arrow{ul} & \end{tikzcd}.\] Here $1$ is the trivial
  group structure on a final object of the ambient category and the morphisms
  $1\ra H, 1\ra G$ are the inclusion of identity elements.
\end{definition}

\begin{remark}
  Note that if $\cat{C}$ is a topological category, $\forget $
  maps topologised groups to groups and $\cat{F}$ maps groups to (discrete) group
  objects in $\cat{C}$. We again have an equivalence between the category of
  (abstract) groups and discrete topologised groups via $\cat{F}$.
\end{remark}

\begin{remark}
  If this topological category is the category of (Hausdorff) topological
  spaces, our notions of topologised groups and monoids coincide with the
  standard ones of topological groups and monoids. Other important examples are
  the categories of $L$-analytic manifolds where $L$ is a local field.
\end{remark}

\begin{definition}
  A morphism $N\ra G$ of topologised groups is called \emph{normal}, if its
  cokernel exists in the category of topologised groups with kernel exactly
  $N\ra G$. The cokernel $G\ra C$ will also be called the \emph{quotient} of $G$
  by $N$ and we will simply write $C\cong G/N$. A morphism $U\ra G$ is called
  an open normal subgroup, if it is normal and $G/U$ is discrete.
\end{definition}
\begin{remark}
  Note that this definition allows us to avoid the notion of \emph{strictness},
  which is rather cumbersome, cf.~\cref{rmk:image-coimage-are-horrible-defs}.
  Indeed, consider the bijective morphism $\R^\delta \ra \R$ in the category of
  locally compact groups, where $\R^\delta$ carries the discrete topology and
  $\R$ the usual one. It is easy to see that the cokernel of this morphism is
  the trivial morphism $\R\ra 1$, which has kernel $\R\oset{\id}{\ra}\R$, so
  $\R^\delta\ra\R$ is not normal.
\end{remark}
\begin{proposition}\label{prop:discrete-quotient-is-set-quotient}
  Let $U\ra G$ be an open normal subgroup of topologised groups. Then \[
    \forget (G/U) \cong \forget G /
    \forget  U.\]
    \begin{proof}
      Note that $\forget $ commutes with arbitrary limits and
      especially with taking kernels. Therefore
      \begin{align*}
        \Hom_{\cat{Grp}}(\forget (G/U), H) & = \forget  \Hom_{\cat{Grp_C}}(G/U,\cat{F}H)\\
                                                  &= \forget 
                                                    \left\{f\in \Hom_{\cat{Grp_C}}(G,\cat{F}H)\mid \ker f \supseteq U \right\} \\
                                                  &\subseteq \left\{ f \in \Hom_{\cat{Grp}}(\forget G,H) \mid \ker f \supseteq \forget  U \right\} \\
                                                &=
                                                  \Hom_{\cat{Grp}}(\forget 
                                                  G/\forget U,
                                                  H),
      \end{align*}
      which yields a surjection \[\forget G/\forget U \surj\forget (G/U),\] as
      epimorphisms in the category of groups are exactly the surjective group
      homomorphisms. On the other hand we also have a natural injection $\forget
      G/\forget U \inj\forget (G/U)$ as $\forget $ preserves kernels. As both
      maps clearly coincide, this proves the proposition.
    \end{proof}
\end{proposition}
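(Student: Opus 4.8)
The plan is to produce the asserted isomorphism as the canonical homomorphism $\iota\colon\forget G/\forget U\to\forget(G/U)$ induced by the quotient morphism $q\colon G\to G/U$, and then to check separately that $\iota$ is injective and surjective. (One should first confirm that $\forget U$ is a normal subgroup of $\forget G$, so that the target even makes sense; this is a byproduct of the injectivity step.)

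For injectivity the relevant input is that $\forget$, being right adjoint to $\cat F$, preserves all limits — in particular kernels — and that this persists after passage to group objects, the kernel of a morphism of topologised groups being computed as a finite limit which $\forget$ preserves. Since by definition of an \emph{open normal subgroup} the morphism $U\to G$ is the kernel of $q$, applying $\forget$ shows that $\forget U\to\forget G$ is the kernel of $\forget q\colon\forget G\to\forget(G/U)$ in $\cat{Grp}$. In particular $\forget U$ is normal in $\forget G$, and the first isomorphism theorem factors $\forget q$ through an injection $\iota\colon\forget G/\forget U\hookrightarrow\forget(G/U)$.

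The substance is surjectivity of $\forget q$ (equivalently of $\iota$), and this is the one place where I would use the full hypothesis that $G/U$ is discrete. Since surjections of groups are exactly the epimorphisms of $\cat{Grp}$, it suffices to show that $g\circ\forget q=g'\circ\forget q$ forces $g=g'$ for any pair of group homomorphisms $g,g'\colon\forget(G/U)\rightrightarrows H$. As $\cat F$ commutes with finite products it sends groups to topologised groups, and the identification $G/U\cong\cat F\forget(G/U)$ furnished by discreteness (cf.\ \ref{prop:discrete-objs-are-sets}, in its version for group objects) upgrades $g$ and $g'$ to morphisms of topologised groups $\tilde g,\tilde g'\colon G/U\to\cat F H$ whose underlying maps are $g$ and $g'$ — using here that $\forget\cat F\simeq\id$ (\ref{prop:cat-w-discrete-objs-has-right-inverse-to-forgetting}). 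Then $\tilde g\circ q$ and $\tilde g'\circ q$ are morphisms of topologised groups $G\to\cat F H$ with the same underlying map, hence equal by faithfulness of $\forget$; and since $q$, being a cokernel, is an epimorphism of topologised groups, it follows that $\tilde g=\tilde g'$ and therefore $g=g'$. Combined with the previous paragraph, $\iota$ is a bijective homomorphism, i.e.\ the desired isomorphism, and it is canonical since it is induced by $q$.

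I expect the surjectivity step to be the main obstacle, and within it the lifting of the abstract homomorphisms $g,g'$ to morphisms in the ambient category: an abstract homomorphism out of $\forget(G/U)$ has no reason to be ``continuous'', and it is precisely discreteness of $G/U$ that removes this difficulty and lets one transport the equation $g\circ\forget q=g'\circ\forget q$ back along the \emph{categorical} epimorphism $q$. The remaining ingredients — that $\forget$ preserves kernels, that $\cat F$ preserves finite products, and that cokernels are epimorphisms — are formal.
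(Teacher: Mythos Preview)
Your proof is correct and follows essentially the same approach as the paper: injectivity from $\forget$ preserving kernels, and surjectivity by showing that $\forget q$ is an epimorphism in $\cat{Grp}$ via lifting test homomorphisms to $\cat{C}$ using discreteness of $G/U$ and the cokernel property of $q$. The only difference is cosmetic --- the paper packages the surjectivity step as a natural inclusion of Hom-sets $\Hom_{\cat{Grp}}(\forget(G/U),H)\hookrightarrow\Hom_{\cat{Grp}}(\forget G/\forget U,H)$, whereas you verify the epi condition directly.
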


\begin{proposition}\label{prop:discrete-quotient-has-section}
  Let $G$ be a topologised group with open normal subgroup $U$. Then $G\ra G/U$
  admits a section in $\cat{C}.$
  \begin{proof}
    Consider any section $\forget G/\forget U \ra
    \forget  G$, which by
    \cref{prop:discrete-quotient-is-set-quotient} is a section
    $\forget (G/U)\ra\forget G$. As $G/U$ is
    discrete, this lifts to a section in $\cat{C}.$
  \end{proof}
\end{proposition}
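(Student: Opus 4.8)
The plan is to build the section first on underlying sets and then transport it into $\cat{C}$ using that $G/U$ is discrete. First I would apply \ref{prop:discrete-quotient-is-set-quotient} to identify $\forget(G/U)$ with the set-theoretic quotient $\forget G/\forget U$; under this identification the underlying map of the quotient morphism $G\ra G/U$ is just the canonical surjection of sets. Since every surjection of sets splits, I fix a set-theoretic section $s_0\colon \forget(G/U)\ra\forget G$ of it.

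Next I would use discreteness of $G/U$, which holds by hypothesis since $U\ra G$ is an \emph{open} normal subgroup. By the defining property of $\cat{C}^\delta$ (equivalently, by \ref{prop:discrete-objs-are-sets}, which says $G/U\cong\cat{F}\forget(G/U)$), the forgetful functor induces a bijection $\Hom_{\cat{C}}(G/U,G)\isom\Hom_{\cat{Set}}(\forget(G/U),\forget G)$. Hence $s_0$ is $\forget$ applied to a unique morphism $s\colon G/U\ra G$ in $\cat{C}$.

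Finally I would check that $s$ is a section of $G\ra G/U$ in $\cat{C}$. Applying $\forget$ to the composite $(G\ra G/U)\circ s$ gives $\forget(G\ra G/U)\circ s_0 = \id_{\forget(G/U)}$ by the choice of $s_0$; since $\forget$ is faithful, this forces $(G\ra G/U)\circ s=\id_{G/U}$ in $\cat{C}$. There is essentially no obstacle here: the only points requiring care are that ``section in $\cat{C}$'' means a one-sided inverse in the ambient category—so faithfulness of $\forget$ is exactly what promotes the set-level identity to an identity in $\cat{C}$—and that $s$ is merely a morphism of $\cat{C}$, not a homomorphism of topologised groups.
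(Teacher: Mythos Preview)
Your proof is correct and follows essentially the same approach as the paper's: construct a set-theoretic section via \ref{prop:discrete-quotient-is-set-quotient} and then lift it to $\cat{C}$ using discreteness of $G/U$. You spell out more carefully why the lifted morphism is indeed a section (via faithfulness of $\forget$), which the paper leaves implicit.
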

\begin{proposition}\label{prop:section-implies-quotient-is-set-quotient}
  Let $G$ be a topologised group with normal subgroup $N.$ If $G\ra G/N$ admits
  a section in $\cat{C}$, then $\forget(G/N)\cong\forget G/\forget N.$
  \begin{proof}
    As $\forget$ preserves kernel, we always have an injection $\forget
    G/\forget N\ra \forget(G/N).$ The existence of a section implies that it is
    also surjective and hence an isomorphism of groups.
  \end{proof}
\end{proposition}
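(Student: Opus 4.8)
The plan is to exhibit the desired isomorphism as a bijective homomorphism of abstract groups, by separating off an injection that holds unconditionally from a surjection that consumes the section hypothesis.

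First I would recall that $\forget$, being a right adjoint to $\cat{F}$, preserves all limits and in particular kernels, and that it sends the topologised group $G$, the normal subgroup $N\ra G$ and the quotient morphism $\pi\colon G\ra G/N$ to a group, a subgroup and a group homomorphism respectively. By the definition of a normal morphism, $N\ra G$ is the kernel of $\pi$ in the category of topologised groups; applying $\forget$ therefore identifies $\forget N$ with the kernel of the group homomorphism $\forget\pi\colon\forget G\ra\forget(G/N)$. The first isomorphism theorem for ordinary groups then yields an injective homomorphism $\forget G/\forget N\inj\forget(G/N)$ onto the image of $\forget\pi$, exactly as in the first half of \ref{prop:discrete-quotient-is-set-quotient} and without using any hypothesis on $G\ra G/N$.

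Next I would use the section to upgrade this injection to a bijection. If $s\colon G/N\ra G$ is a section of $\pi$ in $\cat{C}$, then functoriality of $\forget$ gives $\forget\pi\circ\forget s=\id_{\forget(G/N)}$, so the underlying map of sets of $\forget\pi$ admits a set-theoretic section and is hence surjective. Thus the image of $\forget\pi$ is all of $\forget(G/N)$, the injection $\forget G/\forget N\inj\forget(G/N)$ from the previous step is also surjective, and a bijective group homomorphism is an isomorphism of groups, which is the claim.

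I do not anticipate a genuine obstacle here; the only points requiring care are that $\forget$ really does preserve the kernel in question and that the maps involved are group homomorphisms rather than mere maps of sets, both of which follow from the adjunction $\cat{F}\dashv\forget$ together with the fact that $\forget$ preserves finite products. One could instead run the $\Hom$-computation of \ref{prop:discrete-quotient-is-set-quotient} almost verbatim, but the direct injection/surjection argument is shorter and pinpoints exactly where the section enters. Finally, note that this proposition subsumes \ref{prop:discrete-quotient-is-set-quotient}, since \ref{prop:discrete-quotient-has-section} supplies a section in $\cat{C}$ for every open normal subgroup.
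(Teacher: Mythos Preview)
Your proposal is correct and follows essentially the same approach as the paper: establish the injection $\forget G/\forget N\inj\forget(G/N)$ from $\forget$ preserving kernels, then use the section to conclude that $\forget\pi$ is surjective. The paper's own proof is just a two-line compression of exactly this argument.
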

\begin{proposition}\label{prop:quotient-well-behaved-after-product-with-monoid}
  Let $G'$ be a topologised group, $M$ a discrete topologised monoid and $U$ an
  open normal subgroup of $G'$. Then $U\times 1\ra G'\times M$ is the kernel of
  $G'\times M\ra G'/U\times M$ and the latter map is the cokernel of the former
  map in the category of topologised monoids.
  \begin{proof}
    That $U\times 1\ra G'\times M$ is the kernel is clear, as kernels are stable
    under taking products. For $G'\times M\ra
    G'/U\times M$ being its cokernel, note that in the diagram \[\begin{tikzcd}
        U\times 1\arrow{r} & G'\times M\arrow{rd}\arrow{r} & G'/U\times
        M\arrow[dashed]{d}\\
& & D
      \end{tikzcd}\]
    the object $G'/U\times M$ is discrete as $\cat{F}$ commutes with finite limits, so
    the corresponding proposition in the category of (abstract) monoids yields
    the proposition by \cref{prop:discrete-quotient-is-set-quotient}.
  \end{proof}
\end{proposition}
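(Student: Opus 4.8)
The plan is to push the statement through the forgetful functor $\forget$ to the category of abstract monoids, where it is elementary, the decisive point being that $G'/U\times M$ is a discrete object. I would settle the kernel claim immediately: $U\ra G'$ is the kernel of $G'\ra G'/U$ by the definition of an open normal subgroup, $1\ra M$ is the kernel of $\id_M$, and kernels are stable under taking products, so $U\times 1\ra G'\times M$ is the kernel of $\pi\colon G'\times M\ra G'/U\times M$.

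For the cokernel claim I would first record that $G'/U\times M$ is discrete: $G'/U$ is discrete because $U$ is open normal, $M$ is discrete by hypothesis, and $\cat F$ commutes with finite limits in a topological category, so a finite product of discrete objects is discrete (cf.\ \ref{prop:discrete-objs-are-sets}). Now let $D$ be a topologised monoid and $f\colon G'\times M\ra D$ a morphism of topologised monoids whose restriction along $U\times 1\ra G'\times M$ is the composite $U\times 1\ra 1\ra D$. Applying $\forget$ and using \ref{prop:discrete-quotient-is-set-quotient} to identify $\forget(G'/U\times M)$ with $\forget G'/\forget U\times\forget M$, I would invoke the corresponding statement for abstract monoids — that $\forget G'/\forget U\times\forget M$ is the cokernel of $\forget U\times 1\ra\forget G'\times\forget M$, which one checks by splitting a homomorphism out of a direct product into its two coordinate homomorphisms and noting that a homomorphism out of the \emph{group} $\forget G'$ that kills $\forget U$ factors uniquely through $\forget G'/\forget U$ — to obtain a unique monoid homomorphism $\bar g\colon\forget(G'/U\times M)\ra\forget D$ with $\bar g\circ\forget\pi=\forget f$. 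Since $G'/U\times M$ is discrete, this map of sets lifts uniquely to a morphism $\tilde f\colon G'/U\times M\ra D$ in the ambient category.

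It then remains to upgrade $\tilde f$ to a morphism of topologised monoids that factors $f$, and to verify uniqueness. The identity $\tilde f\circ\pi=f$ holds because both sides are morphisms in the ambient category with the same underlying map of sets and $\forget$ is faithful; similarly, $\tilde f$ is compatible with the multiplication and unit maps because the two composites $(G'/U\times M)\times(G'/U\times M)\rightrightarrows D$, respectively the two morphisms from the final object to $D$, have equal underlying set maps, $\bar g$ being an abstract monoid homomorphism. Uniqueness of the factorisation is then the assertion that $\pi$ is an epimorphism of topologised monoids, which holds because $\pi$ admits a section in the ambient category — take $\id_M$ times a section of $G'\ra G'/U$ furnished by \ref{prop:discrete-quotient-has-section} — and split epimorphisms are epimorphisms. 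I expect the only point requiring care to be the constant shuttling between the ambient category, topologised monoids, and $\cat{Set}$: each arrow is first built on underlying sets and must be promoted, but every such promotion is automatic, either from discreteness of the source or from faithfulness of $\forget$, so the real content is just the abstract-monoid cokernel computation together with \ref{prop:discrete-quotient-is-set-quotient}.
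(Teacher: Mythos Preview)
Your proposal is correct and follows essentially the same approach as the paper: both reduce the cokernel verification to the category of abstract monoids by observing that $G'/U\times M$ is discrete (since $\cat{F}$ commutes with finite products) and invoking \ref{prop:discrete-quotient-is-set-quotient}. You simply spell out in detail what the paper leaves implicit; your uniqueness argument via the section of $\pi$ is more than strictly needed (faithfulness of $\forget$ together with uniqueness in abstract monoids already suffices), but it is not wrong.
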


\begin{remark}\label{rmk:image-coimage-are-horrible-defs}
  A morphism is called \emph{strict}, if its image and coimage
  coincide.
  
  Consider the following notion, which we will call the \emph{classical image},
  which is often simply called the image of a morphism, cf.~\cite[I.10]{MR0202787}:
  The classical image of a morphism $f\colon X\ra Y$ is a monomorphism $CI\inj Y$
  and a morphism $X\ra CI$ such that $f = X\ra CI\inj Y$ and for
  every other factorisation $f = X\ra D\inj Y$ there is a unique
  morphism $CI\ra D$ such that the obvious diagrams commute. We can analogously define the
  \emph{classical coimage} of a morphism.

  Note that it is easy to see that in the category of topological spaces, the
  classical image is the set theoretic image with the quotient topology (i.\,e.,
  $V\subseteq f(X)$ is open if and only if $f^{-1}(V)$ is), while the classical
  coimage is the set theoretic image with the subspace topology of the codomain.

  These notions have to be strictly differentiated from the notions of
  \emph{regular images} and \emph{regular coimages}, which are often simply
  called the image and coimage of a morphism, cf.~\cite[definition
  5.1.1]{MR2182076}: The regular image of a morphism $f\colon X\ra Y$ is defined
  as the equaliser $\lim Y\rightrightarrows Y\sqcup_X Y$ and its regular coimage
  as the coequaliser $\colim X \times_Y X \rightrightarrows X$.

  It is again easy to see that in the category of topological spaces, the
  regular image of a morphism is the set theoretic image with the subspace
  topology, and that the regular coimage is given by the set theoretic image
  with the quotient topology, i.\,e., in the category of topological spaces the
  classical image is the regular coimage and the classical coimage is the
  regular image!

  Indeed, a number of sources simply call regular coimages \emph{images} to make
  the confusion complete. For this reason we decided to avoid the notion
  altogether.
\end{remark}
\section{Rigidified $G$-Modules}
\label{sec:rigidified-g-modules}
Let $\cat{C}$ be a topological category and $G$ a topologised monoid in
$\cat{C}$. Then we can define a $G$-module as an abelian group object $A$ in
$\cat{C}$ together with a morphism $G\times A\ra A$ subject to the usual
diagrams. Regrettably this definition is too restrictive for our applications.

\begin{definition}
  Let $\cat{C}$ be a topological category and $\cat{D}$ a concrete category. A
  rigidification from $\cat{C}$ to $\cat{D}$ is a bifunctor \[h\colon
    \cat{C}^\circ\times\cat{D}\ra\cat{Set}\] such that functorially in $X$ and
  $Y$, \[ h(X,Y)\subseteq \Hom_{\cat{Set}}(\forget X,\forget Y)\]
  and \[h(\cat{F}\bullet,-)\cong\forget.\]
\end{definition}
\begin{example}\label{ex-tbl:rigidifications}
  Even though we haven't yet defined the notion of $h$-pliant objects, we want
  to give an overview of the most important examples of rigidifications.

\begin{tabular}{p{0.24\textwidth}lp{0.24\textwidth}p{0.24\textwidth}}
    \toprule
    $\cat{C}$ & $\cat{D}$ & $h(X,Y)$ & $h$-pliant objects \\
    \midrule
    any topological category & $\cat{C}$ & $\Hom_{\cat{C}}(X,Y)$ & all discrete objects\\
    analytic manifolds & LF-spaces & locally analytic maps\par in the sense of \cite[section 5]{MR3522263} & all discrete spaces (considered as zero-dimensional manifolds) \\
    topological spaces & metric spaces & bounded continuous maps & finite discrete spaces\\
    \bottomrule
  \end{tabular}
\end{example}
But even this notion of rigidifications is in some cases to restrictive.

\begin{definition}\label{def:set-w-c-rigidification}
  Let $\cat{C}$ be a topological category. A \emph{set with
    $\cat{C}$-rigidification} is a set $Y$ and a contravariant functor
  $h_Y\colon \cat{C}^\circ\ra\cat{Set}$ such that functorially in $X\in\cat{C}$, \[
    h_Y(X)\subseteq \Hom_{\cat{Set}}(\forget X,Y).\]
  We furthermore require that $h_Y(\cat{F}\bullet)=Y.$
  
  For $f\in h_Y(X)$ we will also write $f\colon X\sra Y$. If for
  discrete $D$ and all $X$ we have an equality
  $h_Y(D\times X)=\Hom_{\cat{Set}}(\forget D,h_Y(X)),$ we say that $D$ is
  $Y$-pliant. It follows that if $D$ is $Y$-pliant, then
  $h_Y(D)=\Hom_{\cat{Set}}(\forget D, Y)$.
\end{definition}
\begin{remark}\label{rmk:fck-lf}
  LF-spaces and induced modules are the main reason we have to consider sets
  with rigidifications and not just rigidifications: Assume a group $G$ with
  normal subgroup $N$ was to act on an LF-space $A$ in a suitable sense. Then
  $A^N,$ being a kernel, need not be an LF-space itself, cf.~\cite{MR0075542}.
  But we still have an object with $\cat{C}$-rigidification in the sense of
  \ref{def:set-w-c-rigidification}.
\end{remark}

\begin{remark}
  Let $h$ be a rigidification from $\cat{C}$ to $\cat{D}$.
   Any object in $\cat{D}$ then gives rise to an object with
   $\cat{C}$-rigidification via $Y\mapsto(\forget Y,h(-,Y))$.
\end{remark}
\begin{definition}
  Let $h$ be a rigidification from $\cat{C}$ to $\cat{D}.$ A discrete object $D$
  in $\cat{C}$ is called $h$-pliant, if for all $Y\in\cat{D}$, $D$ is
  $(Y,h(-,Y))$-pliant.
\end{definition}
\begin{definition}\label{def:module-w-rigidification}
  Let $\cat{C}$ be a topological category and $G$ a topologised monoid in
  $\cat{C}$. A $G$-module with $\cat{C}$-rigidification is a set with
  $\cat{C}$-rigidification  $(A,h_A)$ together with a $\forget G$-module
  structure on $A$ such that functorially in $X\in\cat{C}$
  \begin{itemize}
  \item $h_A(X)$ is a subgroup of $\Hom_{\cat{Set}}(\forget X,A)$
  \item for $f\in h_A(X)$ the induced map \[\begin{tikzcd}
        \forget G\times \forget X\arrow{r}{(\id, f)} & \forget G\times
        A\arrow{r}{\mu} & A\end{tikzcd}\] 
    lies in $h_A(G\times X).$
  \end{itemize}

  A morphism of $G$-modules with $\cat{C}$-rigidification $(A,h_A)\ra (B,h_B)$
  is a morphism of functors $h_A\ra h_B$ such that the induced map
  $A = h_A(\cat{F}\bullet) \ra h_B(\cat{F}\bullet) = B$ is a morphism of
  $\forget G$-modules. A sequence \[(A,h_A) \ra (B,h_B)\ra (C,h_C) \] is called
  a short exact sequence of $G$-modules with $\cat{C}$-rigidification, if for
  all $X\in\cat{C}$ the sequence of abelian groups \[ 0 \ra h_A(X) \ra h_B(X)
    \ra h_C(X) \ra 0\] is exact.
\end{definition}

\begin{remark}
  Let $G$ be a topologised group in a topological category $\cat{C}$ and $A$ a
  $G$-module, i.\,e., an abelian group object in $\cat{C}$ with a morphism
  $G\times A\ra A$ subject to the usual diagrams. Then $(\forget A,
  \Hom_{\cat{C}}(-,A))$ is a $G$-module with $\cat{C}$-rigidification.

  If conversely $A$ is an object in $\cat{C}$ and $(\forget A, h_A)$ a
  $G$-module with $\cat{C}$-rigidification, then we can in general only recover
  a morphism $G\times A\ra A$ in $\cat{C}$ if $h_A =
  \forget(\Hom_{\cat{C}}(-,A))$: In this case, the map \[
    \begin{tikzcd}
      \forget G \times \forget A \arrow{r}{(\id,\forget\id)} & \forget
      G\times\forget A\arrow{r}{\mu}& \forget A
    \end{tikzcd}
\]
lies in $\forget(\Hom_{\cat{C}}(G\times A,A)).$
\end{remark}
\begin{remark}
  \label{rmk:strictness-exactness-rig-g-modules}
 The definition of an exact sequence of rigidified $G$-modules has concrete
interpretations in practice, as the following proposition shows. Indeed they
boil down to the usual requirements as for example in \cite[(2.7.2)]{MR2392026}.

The same arguments also show that in a topological category, a sequence of
$G$-modules is exact if it is strict
(cf.~\ref{rmk:image-coimage-are-horrible-defs}) and the last morphism admits a
section in $\cat{C}$.
\end{remark}
\begin{proposition}
  Let $\cat{C}$ be the category of compactly generated weakly Hausdorff spaces
  and $G$ a group object in $\cat{C}$. We fix $G$-modules $A,B,C$ with
  corresponding rigidifications $h_A,h_B,$ and $h_C$. Then the short exact
  sequences \[(A,h_A) \ra (B,h_B)\ra (C,h_C) \] are in one-to-one correspondence
  with exact sequences \[ 0 \ra A \ra B \ra C \ra 0,\] where all maps are
  continuous, $A$ carries the subspace topology of $B$ and there is a continuous
  section $C\ra B.$
  \begin{proof}
    Let us start with a short exact sequences \[(A,h_A) \ra (B,h_B)\ra
      (C,h_C). \] By Yoneda, morphisms $h_A\ra h_B$ are given by morphisms $A\ra
    B$ etc. Evaluating at $\cat{F}\bullet$ hence gives a short exact sequence\[
      0 \ra A \ra B \ra C \ra 0\] with continuous maps. As by assumption \[
      \Hom_{\cat{C}}(C,B) = h_B(C)\ra h_C(C) =\Hom_{\cat{C}}(C,C)\] is
    surjective and the latter includes the identity, this yields a section.

    We also clearly have a continuous bijective map $\iota\colon A\ra \iota(A)$,
    where the latter carries the subspace topology. The inclusion
    $\iota(A)\subseteq A$ clearly get mapped to zero in $h_C(\iota(A))$, so has
    to come from an element in $h_A(\iota(A)),$ which is the (continuous)
    inverse to $\iota.$

    Conversely, if we start with a short exact sequence \[ 0 \ra A \ra B \ra C
      \ra 0\] with all maps continuous, $B\ra C$ admitting a section and $A$
    carrying the subspace topology, it is easy to see that indeed all
    sequences \[ 0 \ra h_A(X) \ra h_B(X) \ra h_C(X) \ra 0\] are exact.
  \end{proof}
\end{proposition}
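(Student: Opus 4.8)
The statement is a bijection between two kinds of data, so the plan is to write down a construction in each direction and then observe that they are mutually inverse. The only real tool needed is the remark preceding the proposition: since each of $A,B,C$ is a $G$-module object in $\cat{C}$, its rigidification is $h_A=\Hom_{\cat{C}}(-,A)$ (and likewise for $B,C$), so by Yoneda a morphism of functors $h_A\ra h_B$ is the same thing as a morphism $A\ra B$ in $\cat{C}$, and evaluating it at $\cat{F}\bullet$ recovers the underlying continuous map (for $G$-modules, the underlying $\forget G$-equivariant homomorphism). I will also repeatedly use that, by definition, exactness of a sequence of rigidified $G$-modules means that $0\ra h_A(X)\ra h_B(X)\ra h_C(X)\ra 0$ is exact for \emph{every} test object $X\in\cat{C}$, and the trick is to feed in well-chosen $X$.

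Starting from a short exact sequence $(A,h_A)\ra(B,h_B)\ra(C,h_C)$: evaluating at $X=\cat{F}\bullet$ gives a short exact sequence $0\ra A\ra B\ra C\ra 0$ of abelian groups with continuous (and $\forget G$-equivariant) maps; in particular $A\ra B$ is injective, hence a monomorphism in $\cat{C}$, and $B\ra C$ is surjective on underlying sets. Evaluating exactness at $X=C$ gives a surjection $h_B(C)=\Hom_{\cat{C}}(C,B)\surj\Hom_{\cat{C}}(C,C)=h_C(C)$, so $\id_C$ lifts to a continuous section $C\ra B$. Finally, to see that $A\ra B$ is a topological embedding, write $\iota(A)\subseteq B$ for the set-theoretic image with the subspace topology and $\iota\colon A\ra\iota(A)$ for the tautological continuous bijection; evaluating exactness at $X=\iota(A)$, the inclusion $\iota(A)\inj B$ is an element of $h_B(\iota(A))$ that dies in $h_C(\iota(A))$ (it lands in the kernel of $B\ra C$), hence lifts along $h_A(\iota(A))\ra h_B(\iota(A))$ to a continuous map $\iota(A)\ra A$; set-theoretically this map is forced to be $\iota^{-1}$, so $\iota$ is a homeomorphism.

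Conversely, starting from $0\ra A\ra B\ra C\ra 0$ with continuous ($\forget G$-equivariant) maps, with $A$ carrying the subspace topology of $B$, and with a continuous section $s\colon C\ra B$: I claim $0\ra\Hom_{\cat{C}}(X,A)\ra\Hom_{\cat{C}}(X,B)\ra\Hom_{\cat{C}}(X,C)\ra 0$ is exact for every $X$. Left exactness is immediate since $A\ra B$ is monic. Exactness in the middle: a continuous $f\colon X\ra B$ whose composite with $B\ra C$ vanishes factors set-theoretically through $A\subseteq B$, and this factorisation is continuous precisely because $A$ carries the subspace topology. Right exactness: $g\mapsto s\circ g$ is a set-theoretic section of $\Hom_{\cat{C}}(X,B)\ra\Hom_{\cat{C}}(X,C)$. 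Repackaging $A\ra B\ra C$ as morphisms of rigidified $G$-modules (via Yoneda, noting that they respect the abelian-group and $G$-module structure because the original maps do) then yields the short exact sequence of rigidified $G$-modules; alternatively one may invoke \ref{rmk:strictness-exactness-rig-g-modules}.

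The two constructions are visibly inverse to one another: both a short exact sequence of rigidified $G$-modules and the corresponding topological one are, by Yoneda, encoded by the same pair of morphisms $A\ra B$, $B\ra C$ in $\cat{C}$, and the conditions ``$A$ has the subspace topology'' and ``$B\ra C$ admits a section'' are exactly what one direction produces and the other consumes. The step I expect to need the most care is the embedding statement in the first direction: testing exactness against the image object $\iota(A)$ is the one genuinely non-formal move, and one should also check that the $\cat{CGWH}$ notion of a subspace (the $k$-ification of the naive subspace topology) causes no trouble here — it does not, since the continuous bijection $\iota^{-1}\colon\iota(A)\ra A$ produced is a two-sided inverse and hence forces the two $\cat{CGWH}$-structures to coincide.
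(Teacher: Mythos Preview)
Your proposal is correct and follows essentially the same approach as the paper: both directions use Yoneda to identify morphisms of rigidifications with morphisms in $\cat{C}$, test exactness at $X=\cat{F}\bullet$, $X=C$, and $X=\iota(A)$ in the forward direction, and verify levelwise exactness in the converse direction. Your version is in fact slightly more explicit than the paper's (which dismisses the converse as ``easy to see'') and you correctly flag the $k$-ification subtlety in $\cat{CGWH}$, which the paper does not mention.
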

\begin{definition}\label{def:invariants-of-rigidified-module}
  Let $\cat{C}$ be a topological category, $G$ a topologised monoid in $\cat{C}$
  and $(A,h_A)$ a $G$-module with $\cat{C}$-rigidification. For a normal
  subgroup $N\leq G$ we define $A^N=(A^{\forget N}, h_{A^N})$ by \[ h_{A^N}(X) =
    \left\{ f\in h_A(X)\mid f(\forget X)\subseteq A^{\forget N} \right\}.\] We
  immediately see that this is again a $G$-module with $\cat{C}$-rigidification.
\end{definition}
\begin{remark}
  Let $\cat{C}$ be a topological category and $G$ a topologised group with
  normal subgroup $N$. Let $A$
  be a $G$-module in the sense that $A$ is an abelian group object in $\cat{C}$
  together with a morphism $G\times A\ra A$ subject to the usual diagrams.

  Then there is a slightly more natural notion of the invariants $A^N$: Let
  $g\in\forget G$. Then \ref{prop:top-cat-has-const-morph} yields a
  morphism \[m_g\colon A \ra \cat{F}\bullet\times A\ra G\times A\ra A\] that we
  call multiplication by $g$. We will also denote
  $m_g-\id_A\in\Hom_{\cat{C}}(A,A)$ by $g-1.$

  For a finite set $R\subseteq \forget G,$ we denote by \[A^R =
    \ker\begin{tikzcd}A\arrow{rr}{(g-1)_{g\in R}} && \prod_{g\in R}
      A.\end{tikzcd} \] and clearly \[\forget (A^R)=\forget(A)^R.\]

  If $\cat{C}$ admits arbitrary limits, we can analogously define $A^G.$ If
  there is a finite set $R\subseteq \forget G$ such that $\forget(A)^{\forget
    G}=\forget(A)^R,$ then we will also call $A^G=A^R$ and it is an easy
  exercise that both definitions of $A^G$ (when applicable) coincide. In this
  case, the universal property of the kernel yields an action of $G$ on $A^N.$
  In the presence of a section $G/N\ra G$ in $\cat{C}$, we also get a morphism
  $G/N\times A^N\ra A^N$ and we can check on the level of sets that this gives
  $A^N$ the structure of a $G/N$-module.

  It is easy to check that both definitions of invariants coincide, i.\,e., \[
    (\forget (A^N), \Hom_{\cat{C}}(-,A^N)) = ( (\forget A)^{\forget N}, h_{A^N}),\]
  where $h_{A^N}$ is defined as in \cref{def:invariants-of-rigidified-module}.
\end{remark}

\section{The Induced Module}
\label{sec:induced-module}
Let $\cat{C}$ be a topologised category, $G$ a topologised monoid in $\cat{C}$
and $H$ a submonoid of $G$. Let $(A,h_A)$ be an $H$-module with
$\cat{C}$-rigidification.

\begin{definition}
  \begin{gather*}\Ind_G^H(A)\colon\cat{C}^\circ\ra\cat{Set}\\ X\mapsto
    \left\{f\in h_A(X\times G)\mid f(x,hg)=h.f(x,g) \text{ for all } x\in\forget
    X, h\in\forget H,g\in\forget G\right\}\end{gather*}
  is called the induced module of $A$ from $H$ to $G$.
\end{definition}

\begin{proposition}
  Set $I=\Ind_G^H(A)$, then
  $I$ is a $G$-module with $\cat{C}$-rigidification, if we give the set
  $I(\cat{F}\bullet)\subseteq h_A(G)$ the $\forget G$-module action of right
  translation: \[ (gf)(\sigma) = f(\sigma g) \text{ for } f\in
    I(\cat{F}\bullet), g,\sigma\in\forget G.\]
  \begin{proof}
    The only difficulty lies in the formalism.

    Note first that for $g\in\forget G$ and $f\in I(\cat{F}\bullet),$ $gf$
    indeed lies in $h_A(G),$ as right-multiplication by $g$ is a morphism in
    $\cat{C}.$ It then follows immediately that $gf\in I(\cat{F}\bullet).$

    We have to show that for $f\in I(X)\subseteq \Hom_{\cat{Set}}(\forget
    X, I(\cat{F}\bullet))\subseteq \Hom_{\cat{Set}}(\forget X, h_A(G))$ the induced
    map \[\begin{tikzcd}
        \forget G\times \forget X\arrow{r}{(\id, f)} & \forget G\times
        I(\cat{F}\bullet)\arrow{r}{\mu} &
        I(\cat{F}\bullet)\end{tikzcd}\]
    lies in $I(G\times X)\subseteq h_A(G\times X \times G)$.
    
    For this note that there is a morphism
    $G\times X\times G\ra X\times G$ in $\cat{C}$, which on the level of
    elements is given by $(g,x,g')\mapsto (x,g'g)$.
    Precomposing with this morphism yields a map \[ h_A(X\times G) \ra h_A(G
      \times X \times G)\] and it is evident that under this map, the subset
    $I(X)$ gets sent into $I(G\times X)$, which is precisely the map we need.
  \end{proof}
\end{proposition}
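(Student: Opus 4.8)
The plan is to check, in order, that $I$ is a contravariant functor landing in $\Hom_{\cat{Set}}(\forget(-),I(\cat{F}\bullet))$, that $I(\cat{F}\bullet)$ is a $\forget G$-module under right translation, and that this action is compatible with the rigidification in the sense of \cref{def:module-w-rigidification}. Functoriality is automatic: $I$ is the subfunctor of $X\mapsto h_A(X\times G)$ cut out by the conditions $f(x,hg)=h.f(x,g)$, which only constrain the $G$-coordinate, so for any $\phi\colon X'\to X$ the map $h_A(\phi\times\id_G)$ carries $I(X)$ into $I(X')$. The one genuine point is the inclusion $I(X)\subseteq\Hom_{\cat{Set}}(\forget X,I(\cat{F}\bullet))$: for $f\in I(X)$ and $x\in\forget X$, \cref{prop:top-cat-has-const-morph} gives a morphism $\cat{F}\bullet\to X$ realising the constant map at $x$; its product with $\id_G$, composed with the isomorphism $\cat{F}\bullet\times G\cong G$ (valid since $\cat{F}\bullet$ is terminal), lets us precompose $f$ to obtain $\sigma\mapsto f(x,\sigma)$ in $h_A(G)$, and the defining equivariance of $I(X)$ puts this element into $I(\cat{F}\bullet)$. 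Since $\forget$ commutes with finite limits, $\forget(X\times G)=\forget X\times\forget G$, and after currying this says exactly that $f$ factors through $I(\cat{F}\bullet)$.

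For the module structure, I would note that right translation by $g\in\forget G$ is the morphism $G\cong\cat{F}\bullet\times G\to G\times G\xrightarrow{\mu}G$ in $\cat{C}$ (using \cref{prop:top-cat-has-const-morph} for the constant morphism at $g$), so precomposition with it maps $h_A(G)$ to itself and hence $gf\in h_A(G)$; the identity $(gf)(h\sigma)=f(h\sigma g)=h.f(\sigma g)=h.(gf)(\sigma)$ shows $gf\in I(\cat{F}\bullet)$, while $(g_1g_2)f=g_1(g_2f)$ is immediate from the definition. Because precomposition is additive and the equivariance conditions are linear in $f$, both $I(\cat{F}\bullet)\subseteq h_A(G)$ and $I(X)\subseteq h_A(X\times G)$ are subgroups, and the action makes $I(\cat{F}\bullet)$ a $\forget G$-module.

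It remains to verify the compatibility condition. Given $f\in I(X)$, the induced map $\forget G\times\forget X\to I(\cat{F}\bullet)$, $(g,x)\mapsto g.f(x)$, unwinds under currying to $(g,x,\sigma)\mapsto f(x,\sigma g)$ as an element of $\Hom_{\cat{Set}}(\forget G\times\forget X\times\forget G,A)$. The key, exactly as in the construction of the rigidification itself, is that $(g,x,g')\mapsto(x,g'g)$ is a morphism $G\times X\times G\to X\times G$ in $\cat{C}$, assembled from the projection to $X$, the two projections to $G$, and $\mu$; precomposing $f\in h_A(X\times G)$ with it produces an element of $h_A(G\times X\times G)$ equal to the map just described, and the equivariance $f(x,hg'g)=h.f(x,g'g)$ coming from $f\in I(X)$ shows it lies in $I(G\times X)$. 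This is precisely the morphism demanded by \cref{def:module-w-rigidification}.

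The work is entirely in the bookkeeping: keeping the currying identifications $h_A(X\times G)\hookrightarrow\Hom_{\cat{Set}}(\forget X,\Hom_{\cat{Set}}(\forget G,A))$ consistent throughout, and recalling that the commutation of $\forget$ with finite limits together with \cref{prop:top-cat-has-const-morph} are what permit evaluating a rigidified map at a point of $X$. No analytic or categorical input beyond what is already in the excerpt is required.
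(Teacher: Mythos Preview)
Your proof is correct and follows essentially the same approach as the paper: both arguments hinge on the observation that $(g,x,g')\mapsto(x,g'g)$ is a morphism $G\times X\times G\to X\times G$ in $\cat{C}$, so precomposing $f\in I(X)$ with it lands in $I(G\times X)$. You have simply filled in more of the routine verifications (functoriality, the subgroup structure, the currying identification $I(X)\subseteq\Hom_{\cat{Set}}(\forget X,I(\cat{F}\bullet))$) that the paper dismisses with ``the only difficulty lies in the formalism.''
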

\begin{remark}
  If $\cat{C}$ is the category of analytic manifolds over a non-archimedean
  field, there is also a less sheafy view on the subject of induced modules: Let
  $G$ be a group object in $\cat{C}$, $H\leq G$ a closed subgroup, and $A$ an
  analytic representation of $H$. Then there is a
  natural topology on the induced module $\Ind_G^H(A)$, such that the action of
  $G$ on $\Ind_G^H(A)$ is itself analytic, cf.~\cite[Kapitel 4]{MR1691735}.
\end{remark}

We have now set the stage to define the cohomology of topologised monoids with
coefficients in a rigidified module. Our aim for the remainder of this article
is to prove some standard results of group cohomology in this setting. Namely,
we compare cohomology of topologised monoids with their discrete counterparts in
\ref{prop:c-coh-vs-abstract-coh}, show the existence of a long exact sequence in
\ref{prop:l-e-s}, prove two versions of Shapiro's lemma in
\ref{prop:shapiro-top-grps,prop:shapiro-top-mon}, and show variants of the
classical Hochschild-Serre spectral sequence in
\ref{prop:topological-hochschild-serre,prop:quasi-iso-in-direct-product-case}.

\section{Abstract Monoid Cohomology}
\label{sec:abstract-monoid-cohomology}

Note first that the cohomology of monoids is trickier than one might expect.

\begin{definition}
  \label{def:abstract-inhomogeneous-coboundary-operator}
  Let $M$ be an abstract monoid. Then we define the standard resolution as
  follows: Denote by $F_n$ the free $\Z[M]$-module with basis $M^n$ and define
  the coboundary operator via \begin{gather*}\partial\colon F_{n+1}\ra F_{n},\\
      \begin{split}(x_1,\dots,x_{n+1})\mapsto &
      x_1\cdot (x_2,\dots, x_{n+1}) +  (-1)^{n+1}(x_1,\dots,x_n) \\ &+ \sum_{i=1}^n
      (-1)^{i}(x_1,\dots,x_{i-1},x_ix_{i+1},x_{i+1},\dots, x_{n+1}).
    \end{split}\end{gather*}
\end{definition}

\begin{proposition}\label{prop:free-resolution-of-z-is-resolution}
  \[ \dots \ra F_2 \ra F_1 \ra F_0 \ra \Z\ra 0\] is a free resolution of the integers.

  \begin{proof}
    This works exactly the same way it does for groups.
  \end{proof}
\end{proposition}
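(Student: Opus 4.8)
The plan is to exhibit an explicit contracting homotopy, exactly as in the group case, and then simply check that the monoid coboundary $\partial$ from Definition~\ref{def:abstract-inhomogeneous-coboundary-operator} agrees (after the standard change of basis) with the usual bar-complex differential. First I would pass from the inhomogeneous description to the homogeneous one: set $\tilde F_n$ to be the free abelian group on $M^{n+1}$ with $M$ acting diagonally, and define $s_n\colon(m_0,\dots,m_n)\mapsto(m_1^{-1}\dots)$... no --- since $M$ is only a monoid we cannot invert, so the homogeneous trick must be replaced by the direct inhomogeneous argument. So instead I would work with $F_n$ as given and write down the homotopy operator on the augmented complex directly.

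Concretely, define $h_{-1}\colon\Z\ra F_0$ by $1\mapsto(\,)$ (the generator $1\in\Z[M]$, i.e.\ the empty tuple) and $h_n\colon F_n\ra F_{n+1}$ on the $\Z[M]$-generators by $m\cdot(x_1,\dots,x_n)\mapsto (m,x_1,\dots,x_n)$ for $m\in M$, extended $\Z$-linearly (note this is emphatically \emph{not} $\Z[M]$-linear --- the $m$ gets absorbed into the first slot). Then I would verify the homotopy identity $\partial h_n + h_{n-1}\partial = \id_{F_n}$ for $n\geq 1$ and $\varepsilon h_{-1}=\id_\Z$, $\partial h_0 + h_{-1}\varepsilon = \id_{F_0}$, where $\varepsilon\colon F_0\ra\Z$ is the augmentation $m\mapsto 1$. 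This is a purely mechanical expansion: applying $h_n$ to $m\cdot(x_1,\dots,x_n)$ gives $(m,x_1,\dots,x_n)$, then $\partial$ of that produces $m\cdot(x_1,\dots,x_n)$ as its leading term, a telescoping collection of middle terms $\pm(m,\dots,x_ix_{i+1},\dots)$ and $\pm(mx_1,x_2,\dots)$, and the sign-$(-1)^{n+1}$ tail term; the term $h_{n-1}\partial(m\cdot(x_1,\dots,x_n))$ reproduces exactly the negatives of everything except the leading $m\cdot(x_1,\dots,x_n)$, so the sum collapses. The only subtlety is bookkeeping the boundary-case indices ($i=1$, $i=n$) and the constant/tail terms correctly, and checking that the formula for $\partial$ in the monoid setting --- where the ``$x_1\cdot(x_2,\dots)$'' term uses left multiplication by the module element and the ``$(-1)^{n+1}(x_1,\dots,x_n)$'' term drops the \emph{last} coordinate --- still makes $\partial^2=0$ and the homotopy work; this is where the monoid case genuinely differs from a careless transcription of the group formulas.

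Finally, since $F_n$ is by construction a free $\Z[M]$-module, freeness is immediate, and the homotopy identities show the augmented complex is exact, i.e.\ it is a free resolution of $\Z$. The main obstacle is not conceptual but notational: one must be scrupulous about which differential convention is in force (the statement's $\partial$ lowers degree and places the ``forgotten last variable'' term with sign $(-1)^{n+1}$, which is slightly nonstandard), and verify that the chosen $h_\bullet$ is compatible with \emph{that} convention rather than the textbook one. I would relegate the sign-chase to a displayed computation and remark, as the author does, that ``this works exactly the same way it does for groups'' once the homotopy is written down --- the homotopy $h_n$ above does not use inverses, so nothing in the argument actually required $M$ to be a group.
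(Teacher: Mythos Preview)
Your proposal is correct and is precisely the argument the paper is alluding to: the standard contracting homotopy $h_n\colon m\cdot(x_1,\dots,x_n)\mapsto(m,x_1,\dots,x_n)$ for the inhomogeneous bar complex, which uses no inverses and therefore carries over verbatim from groups to monoids. Your aside about the homogeneous description failing for monoids is also on point and anticipates the paper's remark immediately following the proposition.
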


\begin{proposition}
  Let $M$ be an abstract monoid and $A$ an $M$-module. Then the inhomogeneous
  cochain complex computes the cohomology of $A\mapsto A^M$.
  \begin{proof}
   Immediate from \cref{prop:free-resolution-of-z-is-resolution}.
  \end{proof}
\end{proposition}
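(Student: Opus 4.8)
The plan is to recognise the functor $A\mapsto A^M$ as $\Hom_{\Z[M]}(\Z,-)$, where $\Z$ carries the trivial $M$-action, and to compute its right derived functors by applying $\Hom_{\Z[M]}(-,A)$ to the free resolution provided by \cref{prop:free-resolution-of-z-is-resolution}.

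First I would observe that for an $M$-module $A$ the submodule of invariants $A^M$ is canonically identified with $\Hom_{\Z[M]}(\Z,A)$, so that by definition the cohomology of $A\mapsto A^M$ is $\Ext^n_{\Z[M]}(\Z,A)$. The category of $M$-modules is the category of $\Z[M]$-modules, which has enough projectives, so these $\Ext$-groups may be computed by applying $\Hom_{\Z[M]}(-,A)$ to any projective resolution of $\Z$. By \cref{prop:free-resolution-of-z-is-resolution}, $\dots\ra F_1\ra F_0\ra\Z\ra 0$ is such a resolution, each $F_n$ being free, hence projective.

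Next I would identify the resulting cochain complex with the inhomogeneous cochain complex. Since $F_n$ is free on the basis $M^n$, restriction to the basis gives a natural isomorphism $\Hom_{\Z[M]}(F_n,A)\cong\Map(M^n,A)$. Under this identification, the transpose of the coboundary operator $\partial\colon F_{n+1}\ra F_n$ of \cref{def:abstract-inhomogeneous-coboundary-operator} becomes precisely the usual inhomogeneous coboundary $\Map(M^n,A)\ra\Map(M^{n+1},A)$: the leading summand $x_1\cdot(x_2,\dots,x_{n+1})$ dualises to the term $x_1.f(x_2,\dots,x_{n+1})$, the summand $(-1)^{n+1}(x_1,\dots,x_n)$ to $(-1)^{n+1}f(x_1,\dots,x_n)$, and the inner sum to the remaining alternating sum over the ``multiplication'' face maps. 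Hence $H^n$ of the inhomogeneous cochain complex is $\Ext^n_{\Z[M]}(\Z,A)$, as required.

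The only point demanding care — and the reason \cref{sec:abstract-monoid-cohomology} opens with a warning — is that $M$ is merely a monoid, so one must genuinely verify that the complex in \cref{prop:free-resolution-of-z-is-resolution} is exact and consists of projective $\Z[M]$-modules, rather than tacitly importing facts special to group algebras; but that verification has already been discharged by \cref{prop:free-resolution-of-z-is-resolution}. Granting it, the present statement is a formal consequence of the balancing of $\Ext$ together with the straightforward bookkeeping of the dualised differential, so there is essentially no further obstacle.
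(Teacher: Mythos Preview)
Your argument is correct and is precisely the expansion of the paper's one-line proof: identify $A^M\cong\Hom_{\Z[M]}(\Z,A)$, compute its derived functors via the free resolution of \cref{prop:free-resolution-of-z-is-resolution}, and unwind $\Hom_{\Z[M]}(F_n,A)\cong\Map(M^n,A)$ to recover the inhomogeneous cochain complex with its differential. The paper's ``immediate'' refers to exactly this standard bookkeeping, so your approach coincides with the intended one.
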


\begin{remark}
  \label{rmk:homogeneous-cochains-not-free}
  The homogeneous cochains do not necessarily form a free resolution. Indeed,
  set $M=(\Z/2,\cdot)$. Then the homogeneous complex is given by
  $F'_n=\Z[M^{n+1}]$ with diagonal action and the usual differential. However,
  it is \emph{not} a free resolution of the integers: It is evident that $F'_1$
  is not cyclic. But every two elements $e_1,e_2\in F'_1$ admit a non-trivial
  combination of zero: Multiplied by the monoid element $(0)$, both are
  contained in $(0,0)\Z\subseteq \Z[M^2]=F'_1$, say, $(0)\cdot
  e_1=\alpha\cdot(0,0)$ and $(0)\cdot e_2=\beta\cdot(0,0)$. If $\alpha$ or
  $\beta$ is zero, this is a non-trivial combination of zero, otherwise
  $\beta\cdot(0)\cdot e_1-\alpha\cdot(0)\cdot e_2$ will do.

  Nonetheless, $F'_1$ is still a \emph{projective} $\Z[M]$-module. Consider the
  $\Z$-linear homomorphisms $F'_1\ra \Z[M]$ \begin{gather*}
    A_1\colon (1,1)\mapsto (1); (0,1), (1,0), (0,0) \mapsto (0)\\
    A_2\colon (1,0)\mapsto (1)-(0); (1,1), (0,1), (0,0)\mapsto 0\\
    A_3\colon (0,1)\mapsto (1)-(0); (1,1), (1,0), (0,0)\mapsto 0.
  \end{gather*}
  It is easy to verify that these maps are actually $\Z[M]$-linear and that \[ x
    \mapsto (1,1) A_1(x) + (1,0) A_2(x) + (0,1) A_3(x)\] is the identity on
  $F'_1,$ which by the dual basis theorem is therefore projective. One can
  analogously show that $F'_\bullet$ is still a projective resolution of $\Z$ in
  this case.
\end{remark}

\section{Setup}
\label{sec:setup-topological-cochains}
For the remainder of this article, we fix a topological category
$\cat{C}=(\cat{C},\forget,\cat{F})$ in which everything takes place, a
topologised group $G'$ with an open normal subgroup $U'$ and abelian topologised
monoids $M_1,\dots, M_r$. Set \[ U = U' \times M_1^{e_1}\times\dots\times
  M_r^{e_r}\] with all $e_i\in\{0,1\}$ and $e_i=1$ if $M_i$ is not discrete. We
also set $G=G'\times \prod_i M_i$ and see that $U\ra G$ has a cokernel $G/U
\cong G'\times\prod_i M_i^{1-e_1},$ whose kernel is $U$ and which is discrete,
cf.~\cref{prop:quotient-well-behaved-after-product-with-monoid}. We will
furthermore use the shorthand $M=\prod_i M_i.$

We let further $N'$ be a normal subgroup of $G'$ and $N=N'\times\prod_i M_i^{e_i'}$
with $e_i'\in\{0,1\}$. It is again evident that $N\ra G$ has a cokernel (which we
denote by $G/N$) and that the kernel of this cokernel is precisely $N$. We
furthermore require the existence of a section $s\colon G/N\ra G$ in $\cat{C}$
whose image on the level of sets contains the neutral element. If $N=U,$ this
exists automatically by \cref{prop:discrete-quotient-has-section} and everything
that we prove for the $N$ will also automatically be true for $U$, but the
converse does not hold. The section is of vital importance; without it,
statements such as $\forget(G/N)\cong\forget G/\forget N$ need not be true,
cf.~\ref{prop:section-implies-quotient-is-set-quotient}.

The projections $G\ra G/N$ and $G\ra G/U$ will both be denoted by $\pi$. It will
always be clear from the context which map is meant.

The section gives rise to two important morphisms: On the one hand, the
\emph{choice of a representative} morphism
$(-)^*\colon G\ra G$, which is the composition of the projection onto $G/N$
followed by the section $s\colon G/N\ra G$, and on the other hand the morphism
$(-)_N\colon G\ra G$ which on $\forget G$ is given by $x\mapsto (x^*)^{-1}x$
with the obvious interpretation of this on the monoid parts (either the identity or
constant $1$). It is clear that $(-)_N$ factors through $N\ra G,$ and that the
composition \[ \begin{tikzcd}N\arrow{r}& G \arrow{r}{(-)_N} & N\end{tikzcd}\] is
the identity on $N$. Evidently we can factor the identity on $G$
as \[\begin{tikzcd} G \arrow{r}{\Delta} & G\times G\arrow{rr}{((-)^*, (-)_N)}& &
    G\times G\arrow{r}{\mu} & G.\end{tikzcd}\] It is important that these maps
exist in $\cat{C},$ which is why we spell out these details.

We also fix a $G$-module with $\cat{C}$-rigidification $A=(A,h_A)$ and assume
that $G/U$ is $A$-pliant, cf.\,\ref{ex-tbl:rigidifications} for examples of what
this means in practice.

For $n\geq 0$ set
  \begin{itemize}
  \item $X^n=X^n(G,A)=h_A(G^n) \subseteq
    \Hom_{\cat{Set}}(\forget (G)^n, A)$, the inhomogeneous cochains, with the
    convention that $G^0\cong\cat{F}\bullet$ and hence $X^0=A,$
  \item $C^n=C^n(G,A)$ those maps $f\in X^n(G,A)$ such that $f(x_1,\dots,x_n)=0$
    if at least one of the $x_i$ is $1$, the normalised cochains,
  \item $I^jC^n$ those maps in $C^n$ that come from morphisms in $h_A(G^{n-j}
    \times (G/N)^j)$ (i.\,e., $I^jC^n= C^n \cap h_A(G^{n-j}
    \times (G/N)^j)$, the intersection taking place in $X^n$). We set $I^jC^n=0$
    for $j>n.$
  \end{itemize}

  We can characterise the filtration as follows:
  \begin{lemma}\label{prop:n-invariance-enough-for-i-filt}
    Let $f\in C^n.$ Then $f\in I^jC^n$ if and only if the last $j$ arguments are
    $\forget N$-invariant, i.\,e., \[ f(x_1,\dots,x_{n-j}, x_{n-j+1} \sigma_1,\dots,
      x_n \sigma_j) = f(x_1,\dots, x_n) \text{ for all } x_i\in \forget G,
      \sigma_i\in \forget N.\]
    \begin{proof}
      The ``only if'' part of the proposition is clear.

      For the ``if'' part, we only show the case of $j=n=1,$ as the other cases
      follow completely analogously.

      As we have a section $s\colon G/N\ra G$,
      \ref{prop:section-implies-quotient-is-set-quotient} shows that 
      $\forget G/\forget N\cong\forget (G/N)$. Consider the following
      commutative diagram:
      \[
        \begin{tikzcd}
          h_A(G/N) \arrow{d}{h_A(\pi)}\arrow[hook]{r}\arrow[bend right=80,swap]{dd}{\id} &
          \Hom_{\cat{Set}}(\forget G/\forget N, A)
          \arrow{d}{\Hom_{\cat{Set}}(\forget \pi, A)} \arrow[bend left=80]{dd}{\id}\\
          h_A(G)\arrow[hook]{r}\arrow{d}{h_A(s)} & \Hom_{\cat{Set}}(\forget
          G,A)\arrow{d}{\Hom_{\cat{Set}}(\forget s, A)}\\
          h_A(G/N) \arrow[hook]{r}& \Hom_{\cat{Set}}(\forget G/\forget N, A)
        \end{tikzcd}
      \]
      Starting with an $\forget N$-invariant $f\in h_A(G)$, we know from the
      assumption that it comes from an element in $\Hom_{\cat{Set}}(\forget
      G/\forget N, A)$. But the diagram implies that this element is necessarily
      identical to $h_A(s)(f)$.
    \end{proof}
  \end{lemma}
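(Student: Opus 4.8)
The ``only if'' direction is immediate, exactly as in the $j=n=1$ case already treated in the excerpt: if $f\in C^n$ is the image of some $g\in h_A(G^{n-j}\times(G/N)^j)$ along the projection morphism $p_j\colon G^n\to G^{n-j}\times(G/N)^j$ (the identity on the first $n-j$ factors and $\pi$ on the last $j$), then on the level of sets $f=g\circ\forget p_j$ factors through $\forget G^{n-j}\times(\forget G/\forget N)^j$, which is precisely $\forget N$-invariance in the last $j$ arguments. So the content is the converse, and I would prove it for general $j$ at once, the notation being no worse than for $j=n=1$ and the ``inert'' first $n-j$ coordinates being carried along unchanged.

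The plan is to reproduce the three-row commutative diagram of the $j=n=1$ case with $G/N$ replaced by $G^{n-j}\times(G/N)^j$, $G$ by $G^n$, $\pi$ by $p_j$, and $s$ by $\sigma_j:=\id\times s^{\times j}\colon G^{n-j}\times(G/N)^j\to G^n$; both $p_j$ and $\sigma_j$ are morphisms of $\cat{C}$ since $\pi$ and $s$ are. Using $\pi\circ s=\id_{G/N}$ one gets $p_j\circ\sigma_j=\id$, and from \ref{prop:section-implies-quotient-is-set-quotient} together with $\forget$ commuting with the finite products in sight one has $\forget(G^{n-j}\times(G/N)^j)\cong\forget G^{n-j}\times(\forget G/\forget N)^j$. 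The diagram then has $h_A(\sigma_j)$ and $h_A(p_j)$ (resp.\ their $\Hom_{\cat{Set}}$-analogues, precomposition with $\forget\sigma_j$ and $\forget p_j$) as its vertical arrows, the canonical inclusions $h_A(-)\hookrightarrow\Hom_{\cat{Set}}(\forget(-),A)$ as its horizontal arrows, and side composites equal to the identity.

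Given $f\in C^n$ whose last $j$ arguments are $\forget N$-invariant: the invariance, combined with $\forget(G^{n-j}\times(G/N)^j)\cong\forget G^{n-j}\times(\forget G/\forget N)^j$, says exactly that $f$ — as an element of $\Hom_{\cat{Set}}(\forget G^n,A)$ — is the image of some set map $\bar f$ along precomposition with $\forget p_j$. Chasing the diagram (the lower square plus injectivity of the inclusions) forces $\bar f$ to be the image of $h_A(\sigma_j)(f)\in h_A(G^{n-j}\times(G/N)^j)$, and then that $h_A(p_j)\bigl(h_A(\sigma_j)(f)\bigr)=f$; equivalently, this last equation is just $f\circ\forget(\sigma_j\circ p_j)=f$, which is the assumed invariance applied to the arguments moved by $(-)^*=s\circ\pi$. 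Hence $g:=h_A(\sigma_j)(f)\in h_A(G^{n-j}\times(G/N)^j)$ maps to $f$, so $f$ lies in the image of $h_A(p_j)$; since $f$ was already normalised, $f\in C^n\cap h_A(G^{n-j}\times(G/N)^j)=I^jC^n$.

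The only genuinely delicate point — and the reason the Setup insists on it — is that $p_j$, $\sigma_j$, and hence $(-)^*$ and $(-)_N$, must exist \emph{as morphisms in $\cat{C}$}, which is exactly what the assumed section $s\colon G/N\ra G$ provides; without it $\forget(G/N)\cong\forget G/\forget N$ can fail and the set-level factorisation has nothing to lift to. Everything else is bookkeeping: commutativity of the enlarged diagram (functoriality of $h_A$ and naturality of the inclusions into $\Hom_{\cat{Set}}$) and the observation that the first $n-j$ coordinates play no role, which is precisely why the excerpt restricts its written-out argument to $j=n=1$.
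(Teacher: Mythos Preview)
Your proof is correct and follows essentially the same approach as the paper: the paper's own argument restricts to the case $j=n=1$ and asserts that the general case is analogous, while you have simply written out that analogous argument by replacing $G/N$, $G$, $\pi$, $s$ with $G^{n-j}\times(G/N)^j$, $G^n$, $p_j$, $\sigma_j$ throughout the same three-row diagram.
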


  Note that for $f\in X^n,$ the induced face maps \[s_k(f)\colon(x_i)\mapsto
  f(x_1,\dots,x_k,1,x_{k+1},\dots,x_{n-1})\] lie in $X^{n-1}$.

  We will often omit the forgetful functor $\forget $, e.\,g.,
  instead of $x\in \forget N$ we will simply write $x\in N.$
  
  \begin{proposition}
    \label{prop:coboundary-operator-well-defined}
    The assignment
    \begin{align*}
      \partial f(x_1,\dots,x_{n+1}) &= x_1.f(x_2,\dots,x_{n+1}) + (-1)^{n+1}f(x_1,\dots,x_n)\\
      & \quad\quad + \sum_{i=1}^n (-1)^i f(x_1,\dots,x_{i-1},x_ix_{i+1},x_{i+2},\dots, x_{n+1}),
    \end{align*}
    induced by \ref{def:abstract-inhomogeneous-coboundary-operator}, gives rise to
    well-defines maps
    \begin{gather*}
      \partial\colon X^n\ra X^{n+1},\\
      \partial\colon C^n\ra C^{n+1},\\
      \partial\colon I^jC^j\ra I^jC^{n+1}.
    \end{gather*}
    \begin{proof}
      To see that $\partial f\in X^{n+1}$ for $f\in X^n,$ it suffices to check
      this for each of the summands, as $X^{n+1}$ is an abelian group per
      definition. All but the first summand stem from a composition \[
        \begin{tikzcd}
         G^{n+1} \arrow{r} & G^n\arrow[sra]{r}{f} & A 
        \end{tikzcd}
        \] and hence lie in $X^{n+1}$. That the
    first summand lies in $X^{n+1}$ is precisely the second requirement in
    \cref{def:module-w-rigidification}.

    If $f$ is normalised, then in the expansion of $\partial
    f(x_1,\dots,x_i,1,x_{i+1},\dots,x_n)$ there are exactly two terms that are
    not trivially zero. But these terms are identical except for an opposing
    sign and hence cancel.
    
    If $f\in I^jC^n,$ we want to show that the last $j$ arguments of $\partial
    f$ are $\forget N$-invariant. But the last $j$ arguments of $\partial f$
    only contribute to the last $j$ arguments of every individual summand in the
    coboundary expansion of $\partial f$, which are $\forget N$-invariant.
    \end{proof}
  \end{proposition}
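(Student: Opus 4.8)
The statement to prove is \cref{prop:coboundary-operator-well-defined}: that the inhomogeneous coboundary formula restricts well to the three groups $X^n$, $C^n$, and $I^jC^n$. The plan is to treat the three claims in sequence, reducing each to a property already available in the setup. For the first claim, that $\partial f \in X^{n+1}$ whenever $f \in X^n = h_A(G^n)$, the key observation is that $X^{n+1}$ is an abelian group (this is part of \cref{def:module-w-rigidification}), so it suffices to show each summand lies in $X^{n+1}$. Every summand except the leading one $x_1.f(x_2,\dots,x_{n+1})$ is a precomposition of $f$ with an explicit morphism $G^{n+1} \ra G^n$ in $\cat{C}$ (the various "multiply two adjacent coordinates or drop the last" maps), so functoriality of $h_A$ places it in $h_A(G^{n+1}) = X^{n+1}$. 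The leading summand is handled by the second bullet of \cref{def:module-w-rigidification}, which says precisely that composing $f$ with the module action produces an element of $h_A(G \times G^n)$.

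For the second claim, that $\partial$ preserves $C^n$, I would evaluate $\partial f$ at a tuple with a $1$ inserted in position $i$ and observe that, because $f$ is normalised, all but two summands vanish identically: the only surviving terms are those in which the inserted $1$ gets absorbed by an adjacent multiplication, namely $x_i \cdot 1 = x_i$ and $1 \cdot x_{i+1} = x_{i+1}$. These two terms are equal as functions of the remaining arguments, and they carry opposite signs $(-1)^i$ and $(-1)^{i+1}$, so they cancel; one checks the edge cases $i=1$ and $i=n+1$ separately, where the "absorbing" partner is instead the $x_1.f(\dots)$ term or the $(-1)^{n+1}f(\dots)$ term respectively. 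For the third claim, that $\partial$ preserves $I^jC^n$, I would invoke \cref{prop:n-invariance-enough-for-i-filt}: an element of $C^{n+1}$ lies in $I^jC^{n+1}$ iff its last $j$ arguments are $\forget N$-invariant. Since the last $j$ arguments $x_{n+2-j},\dots,x_{n+1}$ of $\partial f$ feed only into the last $j$ slots of each summand's argument list (the adjacent-multiplication maps never mix the last $j$ coordinates with earlier ones, and the module action only touches the first coordinate), $\forget N$-invariance of $f$ in its last $j$ arguments propagates summand-by-summand to $\partial f$.

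The only mild subtlety — and the step I would write most carefully — is the combinatorial bookkeeping in the $C^n$ case: one must verify that inserting $1$ in position $i$ really does kill every term except the designated pair, including checking that the two multiplication maps $x_{i-1}(x_i \cdot 1)$-type and $(1 \cdot x_{i+1})x_{i+2}$-type terms don't accidentally coincide with a third surviving term, and that the boundary indices behave. This is routine but is the one place where an off-by-one slip would be easy; everything else is a direct appeal to functoriality of $h_A$ and to \cref{def:module-w-rigidification,prop:n-invariance-enough-for-i-filt}. I do not expect any genuine obstacle, since the argument is structurally identical to the classical group-cohomology computation, with the only new ingredient being that "the formula lands in the rigidified cochain group" is exactly what the two bullets of \cref{def:module-w-rigidification} were designed to guarantee.
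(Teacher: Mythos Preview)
Your proposal is correct and follows essentially the same approach as the paper: both treat the three claims in order, handle $X^n\to X^{n+1}$ summand-by-summand via functoriality of $h_A$ together with the second bullet of \cref{def:module-w-rigidification}, observe for $C^n$ that inserting a $1$ leaves exactly two surviving terms that cancel, and for $I^jC^n$ argue that the last $j$ inputs of $\partial f$ land only in the last $j$ slots of each summand (so \cref{prop:n-invariance-enough-for-i-filt} applies). Your treatment is slightly more explicit about the edge cases $i=1$ and $i=n+1$ in the normalised step, but this is a presentational difference only.
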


  \begin{remark}
    It might seem artificial to consider cochains whose \emph{last} $j$
    arguments are $\forget N$-invariant instead of the \emph{first} $j$, which
    will also lead to somewhat counter-intuitive definitions later on. But the
    equation \[\partial f(x,y) = x.f(y) - f(xy) + f(x)\] shows that if $f$ is
    $\forget N$-invariant, only the second argument of $\partial f$ is $\forget
    N$-invariant and not necessarily the first.
  \end{remark}

  \begin{remark}
    \label{rmk:homogeneous-cochains-for-groups}
    For $G'$ we can also form the complex $ \widetilde{X}^\bullet(G',A)$ given
    by \[ \widetilde{X}^n(G',A) = h_A((G')^{n+1})\] with differential \[
      \widetilde{\partial}(f)(x_0,\dots,x_{n+1}) = \sum_{i=0}^{n+1}(-1)^i
      f(x_0,\dots,x_{i-1},x_{i+1},\dots,x_{n+1}).\] By the usual arguments
    (cf.~e.\,g.~\cite[14]{MR2392026}), we get an isomorphism of complexes \[
      X^\bullet(G',A)\cong\widetilde{X}^\bullet(G',A).\] However, the usual
    morphism \[\phi\colon X^n(G',A)\ra \widetilde{X}^n(G',A),\] which is given by
    \[ \phi(f)(x_0,\dots,x_n) = x_0
      f(x_0^{-1}x_1,x_1^{-1}x_2,\dots,x_{n-1}^{-1}x_n)\] can only be defined for
    the group object $G'$ and not for the monoid object $G.$ The example in
    \ref{rmk:homogeneous-cochains-not-free} shows that both complexes cannot be
    isomorphic in general. They might still be quasi-isomorphic, however, we
    were unable to show this.
  \end{remark}
  
\section{Cohomology of Topologised Monoids}
\label{sec:cohomology-of-topologised-monoids} 
We will call $ H^\bullet(G,A) = H^\bullet(X^\bullet,\partial)$ the
($\cat{C}$-)cohomology of $G$ with coefficients in $A$. Note that if $G$ is
$A$-pliant, the $\cat{C}$-cohomology of $G$ with coefficients in $A$ is just the
(abstract) monoid cohomology of $\forget G$ with coefficients in $A.$ Generally,
comparing topological cohomology with abstract cohomology only works well in low
degrees.
\begin{proposition}
  \label{prop:c-coh-vs-abstract-coh}
  For every $n$ there is a natural morphism \[ H^n(G,A)\ra H^n(\forget G,A),\]
  which is an isomorphism for $n=0$ and injective for $n=1$.
  \begin{proof}
    Clearly the following diagram commutes
    \[\begin{tikzcd}
        X^{n-1}(G,A) \arrow{r}{\partial}\arrow[hook]{d}& X^n(G,A) \arrow{r}{\partial}\arrow[hook]{d} & X^{n+1}(G,A)\arrow[hook]{d}\\
        X^{n-1}(\forget G,A) \arrow{r}{\partial}& X^n(\forget G,A)
        \arrow{r}{\partial} & X^{n+1}(\forget G,A),
      \end{tikzcd}
      \] which yields the required comparison morphisms. By definition, \[
        X^0(G,A)=X^0(\forget G,A),\] so the morphism is indeed an isomorphism
      for $n=0$ and injective for $n=1.$
  \end{proof}
\end{proposition}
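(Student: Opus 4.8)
The plan is to exhibit the comparison morphism $H^n(G,A)\to H^n(\forget G,A)$ as being induced by the inclusion of cochain complexes $X^\bullet(G,A)\hookrightarrow X^\bullet(\forget G,A)$, and then to handle the low-degree claims by a direct inspection of cocycles and coboundaries. First I would note that by definition $X^n(G,A)=h_A(G^n)\subseteq \Hom_{\cat{Set}}(\forget(G)^n,A)=X^n(\forget G,A)$, functorially in each coordinate, and that this inclusion is a morphism of abelian groups (first bullet of \cref{def:module-w-rigidification}). The key point is that the coboundary formula for $\partial$ is literally the same formula on both complexes — the only subtlety (that $\partial$ preserves the rigidified subgroup $X^n(G,A)$) is exactly the content of \cref{prop:coboundary-operator-well-defined} — so the square in the statement commutes on the nose and we get a chain map $X^\bullet(G,A)\to X^\bullet(\forget G,A)$, hence the induced maps on cohomology.

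For $n=0$ there is nothing to prove beyond unwinding definitions: $X^0(G,A)=A=X^0(\forget G,A)$ by the convention $G^0\cong\cat{F}\bullet$, and the differential $\partial\colon X^0\to X^1$ is the same map $a\mapsto(x\mapsto x.a-a)$ in both complexes; therefore the two kernels $H^0$ literally coincide and the comparison map is the identity. For $n=1$ I would argue injectivity as follows: a class in $H^1(G,A)$ is represented by a $1$-cocycle $f\in X^1(G,A)$, i.e. $f\in h_A(G)$ with $\partial f=0$; if its image in $H^1(\forget G,A)$ is zero, there is some $a\in A=X^0(\forget G,A)$ with $f=\partial a$, meaning $f(x)=x.a-a$ as maps of sets. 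But $a$ also lies in $X^0(G,A)=A$, so $\partial a$ computed in $X^\bullet(G,A)$ is an element of $X^1(G,A)$ whose underlying set map is $x\mapsto x.a-a$; since $X^1(G,A)\hookrightarrow X^1(\forget G,A)$ is injective, $f=\partial a$ already in $X^1(G,A)$, so $f$ is a coboundary there and the class vanishes. Hence the map is injective on $H^1$.

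The only place where anything could go wrong is the verification that the vertical inclusions really are chain maps, i.e. that forming $\partial f$ inside $X^\bullet(\forget G,A)$ of an element $f$ coming from $X^\bullet(G,A)$ lands back in the rigidified subcomplex and agrees with $\partial$ computed there; but this is precisely \cref{prop:coboundary-operator-well-defined}, whose proof already checks that each summand of $\partial f$ is a composition of $f$ with a structural morphism of $\cat{C}$ (together with the module-action clause of \cref{def:module-w-rigidification} for the leading term). So I expect no real obstacle: the argument is entirely formal, and the "main step" is simply pointing out that the abstract inhomogeneous coboundary of \cref{def:abstract-inhomogeneous-coboundary-operator} restricts compatibly to the rigidified cochains. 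I would keep the write-up to essentially the commutative diagram plus the one-line observations about $n=0$ and $n=1$ above.
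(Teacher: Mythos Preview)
Your proposal is correct and follows essentially the same approach as the paper: the comparison map is induced by the inclusion of cochain complexes, and the equality $X^0(G,A)=X^0(\forget G,A)$ immediately gives both the isomorphism in degree $0$ and the injectivity in degree $1$. Your write-up merely spells out the last step (that a coboundary witness $a\in A$ already lives in $X^0(G,A)$) more explicitly than the paper does.
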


Our definition of an exact sequence of rigidified $G$-modules is custom tailored
to admit a long exact sequence of cohomology groups.

\begin{theorem}
  \label{prop:l-e-s}
  Let \[ 0 \ra A \ra B \ra C \ra 0\] be an exact sequence of rigidified
  $G$-modules.
  Then there is a long exact sequence of abelian groups \[\begin{tikzcd}
      0 \arrow{r} & A^{\forget G} \arrow{r} & B^{\forget G} \arrow{r} &
      C^{\forget G} \arrow{r} & \dots \arrow[out=355, in=175, looseness=0.9, overlay]{dlll} \\
      & H^n(G,A) \arrow{r} & H^n(G,B) \arrow{r} & H^n(G,C) \arrow[out=355, in=175, looseness=0.9, overlay]{dll}\\
      & H^{n+1}(G,A)\arrow{r} & \dots
    \end{tikzcd}\]
  \begin{proof}
    By definition of exactness of a sequence of rigidified $G$-modules, we have
    the following commutative diagram with exact rows:
    \[
      \begin{tikzcd}
        & \vdots\arrow{d}{\partial} & \vdots \arrow{d}{\partial} & \vdots \arrow{d}{\partial} & \\
        0 \arrow{r} & X^n(G,A)\arrow{r}\arrow{d}{\partial} & X^n(G,B)\arrow{r}\arrow{d}{\partial} & X^n(G,C) \arrow{r}\arrow{d}{\partial}& 0 \\
        0 \arrow{r} & X^{n+1}(G,A) \arrow{r}\arrow{d}{\partial}& X^{n+1}(G,B)\arrow{r}\arrow{d}{\partial} &\arrow{r}\arrow{d}{\partial} X^{n+1}(G,C) \arrow{r}\arrow{d}{\partial}& 0 \\
        & \vdots & \vdots & \vdots &
      \end{tikzcd}
    \]
    As usual, the snake lemma implies the existence of the long exact sequence
    as required.
  \end{proof}
\end{theorem}

As in the classical case, the normalised cochains compute the same cohomology as
all inhomogeneous cochains.

\begin{proposition}
  \label{prop:c-quasiiso-x}
  The inclusion $C^\bullet\ra X^\bullet$ is a quasi-isomorphism.
  \begin{proof}
    The original proof in \cite[§\,6]{MR0019092} works without issues, as for
    every $f\in X^n$ the map $s_kf\colon (x_1,\dots,x_{n-1})\mapsto
    f(x_1,\dots,x_{k-1},1,x_k,\dots,x_{n-1})$ is again in $X^{n-1}$.  \end{proof}
\end{proposition}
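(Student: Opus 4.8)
The plan is to reproduce the classical contracting-homotopy argument of Hochschild–Serre. Recall that for abstract groups one shows $C^\bullet \hookrightarrow X^\bullet$ is a quasi-isomorphism by exhibiting a filtration of $X^\bullet$ by subcomplexes $X^\bullet = X^\bullet_0 \supseteq X^\bullet_1 \supseteq \dots$, where $X^n_k$ consists of cochains $f$ with $f(x_1,\dots,x_n)=0$ whenever one of the first $k$ arguments equals $1$, and $C^n = X^n_n$ for the relevant range; one then constructs, for each $k$, a cochain homotopy $h_k$ between the identity on $X^\bullet_{k-1}$ and a projection onto $X^\bullet_k$. Concretely, $h_k$ is built from the face map $s_k$ (the insertion of a $1$ in the $k$-th slot) together with the differential, via the standard formula $h_k(f) = (-1)^k\bigl(s_k f$ plus correction terms$\bigr)$, so that $\partial h_k + h_k \partial = \id - p_k$ where $p_k$ annihilates the $k$-th argument on the nose. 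Telescoping over $k$ gives the quasi-isomorphism.

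The only thing that needs checking in our setting — and this is exactly the content of the one-line proof already given — is that all the maps appearing in this argument stay inside the rigidified cochain groups $X^n = h_A(G^n)$, rather than merely inside $\Hom_{\cat{Set}}(\forget(G)^n, A)$. First I would note that the face map $s_k\colon X^n \ra X^{n-1}$, which sends $f$ to $(x_1,\dots,x_{n-1})\mapsto f(x_1,\dots,x_{k-1},1,x_k,\dots,x_{n-1})$, is the precomposition of $f$ with the morphism $G^{n-1}\ra G^n$ in $\cat{C}$ that inserts the identity section $\cat{F}\bullet\ra G$ into the $k$-th coordinate; hence $s_k f\in h_A(G^{n-1})$ by functoriality of $h_A$. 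Likewise every summand of the homotopy correction terms is a precomposition of $f$ (or of $s_k f$) with an explicit product-of-multiplications-and-insertions morphism $G^{m}\ra G^{n}$ in $\cat{C}$, so it too lands in the appropriate $h_A(G^{m})$; and $X^m$ is an abelian group (Definition~\ref{def:module-w-rigidification}), so sums of such terms remain in $X^m$. This is the same bookkeeping already carried out in the proof of \ref{prop:coboundary-operator-well-defined}, where it was checked that $\partial$ preserves $X^\bullet$.

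Having verified that, the homotopy identity $\partial h_k + h_k \partial = \id - p_k$ is then a purely formal computation on cochains that is \emph{identical} to the one in \cite[§\,6]{MR0019092}: it never uses anything beyond the module axioms and the combinatorics of the coface maps, all of which are available for the monoid $\forget G$ acting on the abelian group $A$. No group inverses enter — unlike the homogeneous-to-inhomogeneous comparison of \ref{rmk:homogeneous-cochains-for-groups}, which genuinely fails for monoids, here the insertion maps are all covariant in the monoid — so the argument goes through verbatim for topologised monoids. I do not anticipate a real obstacle; the entire substance is the observation that the classical homotopy is assembled from $\cat{C}$-morphisms, which is why the authors dispatch it in a single sentence.
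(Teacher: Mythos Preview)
Your proposal is correct and follows exactly the paper's approach: the paper's proof consists of the single observation that the face maps $s_k f$ remain in $X^{n-1}$ (because insertion of the identity is a morphism in $\cat{C}$), after which the classical homotopy argument of \cite[§\,6]{MR0019092} applies verbatim. You have simply spelled out in more detail what that classical argument is and why no group inverses are needed, which the paper leaves implicit.
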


\section{Cohomology and Extensions}
\label{sec:cohomology-and-extensions}
For discrete coefficients, the groups $H^i(G,A)$ have concrete interpretations
for $i\leq 2.$ We will give two concrete interpretations for $H^1$ also in the
topological case. For this matter, we fix in this section another topological
category $\cat{D}$ and a rigidification \[h\colon \cat{C}^\circ \times
  \cat{D}\ra\cat{Set}.\]

In this section, we assume that $A$ is actually an abelian group object in
$\cat{D}$ and that $h_A=h(-,A).$

\begin{definition}
  An $A$-torsor is an object $X$ in $\cat{D}$ with a right action from $A$
  (i.\,e., an arrow $\mu\colon X\times A\ra X$ in $\cat{D}$ subject to the usual
  conditions), such that the induced map \[ \begin{tikzcd}
      m\colon X\times A\arrow{r}{(\id,\mu)} & X\times
    X\end{tikzcd}\] is an isomorphism. The composition $\pi_A\circ m^{-1}\colon
X\times X\ra A$ will be denoted by $\backslash.$

  An $A$-torsor with $G$-rigidification is an $A$-torsor $X$, such that
  $(\forget X,h(-,X))$ is a $G$-set with $\cat{C}$-rigidification, and on the
  level of sets for all $g\in\forget G,x\in\forget X,$ and $a\in\forget A$ the
  following holds:
  \[ g\mu(x,a) = \mu(gx,ga).\]
  An isomorphism of $A$-torsors with $G$-rigidification $j\colon X\ra Y$ is an
  isomorphism in $D$ such that on the level of sets, $j$ commutes with both the
  $A$- and $G$-action.
\end{definition}

\begin{theorem}
  \label{prop:h1-torsors}
  $H^1(G,A)$ stands in one-to-one correspondence with isomorphism classes of
  $A$-torsors with $G$-rigidification.
  \begin{proof}
    Given a torsor $X$, we construct an element in $H^1(G,A)$ as follows: First
    choose $x\in\forget X.$ The definition of a $G$-set, together with the
    existence of constant maps in topological categories, imply the existence of
    $\cdot x\in h(G,X),$ which on the level of sets is just given by $g\mapsto
    g\cdot x.$ As $h$ is a bifunctor, we can compose it as follows:
    \[\begin{tikzcd}
        G \arrow[sra]{r}{\cdot x}\arrow[sra]{rdd}[description]{c_X} & X\arrow{d}{(x,\id)}\\
        & X \times X\arrow{d}{\backslash} \\
        & X
        \end{tikzcd}
      \]
      Note that on the level of sets, $c_X(g)$ is the unique element such that
      $g\cdot x= xc_X(g).$ The verification that $c_X$ is a well-defined cocycle
      independent of $x\in \forget X$ is standard.
      
      For the other direction, take a cocycle in $H^1(G,A)$, represented by
      $c\colon G\sra A.$ Define $X=A$ and $\mu\colon X\times A\ra X$ as the
      addition in $A$. We define the $\forget G$-action on $\forget X$
      via \[g.x = c(g) + g\cdot x,\] where $g\cdot x$ is the given action of $G$
      on $A$. It is easy to check that this gives a well defined $G$-module with
      $\cat{C}$-rigidification. The verifications that this construction is (up
      to isomorphism) independent of the class of $c$ and both left and right
      inverse to the previous construction is again standard.
  \end{proof}
\end{theorem}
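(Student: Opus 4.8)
The plan is to establish the bijection between $H^1(G,A)$ and isomorphism classes of $A$-torsors with $G$-rigidification by constructing two maps and showing they are mutually inverse, following the classical argument but being careful about the rigidification formalism. The two directions are already sketched in the statement's proof, so here I describe how I would flesh them out.

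First, from a torsor $X$ to a cohomology class: I would fix $x\in\forget X$ and use \ref{prop:top-cat-has-const-morph} to obtain $\cdot x\in h(G,X)$, the arrow sending $g\mapsto g\cdot x$ (this is a morphism because it factors through the $G$-action arrow precomposed with a constant map into $G\times X$). Composing with the arrow $(x,\id)\colon X\ra X\times X$ and then with $\backslash = \pi_A\circ m^{-1}\colon X\times X\ra A$ gives $c_X\in h(G,A)=h_A(G)=X^1(G,A)$, characterised on sets by $g\cdot x = x\, c_X(g)$. To check $c_X$ is a cocycle, I would compute $g\cdot(g'\cdot x)$ two ways: on one hand it equals $x\, c_X(gg')$; on the other, $g\cdot(x\, c_X(g')) = (g\cdot x)(g\cdot c_X(g')) = x\, c_X(g)(g\cdot c_X(g'))$ using $g\mu(y,a)=\mu(gy,ga)$, so $c_X(gg') = c_X(g) + g\cdot c_X(g')$, which is exactly the condition $\partial c_X = 0$. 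Independence of the choice of $x$: replacing $x$ by $xa$ for $a\in\forget A$ changes $c_X$ by the coboundary $g\mapsto g\cdot a - a$, and any other $x'\in\forget X$ is of the form $xa$ since $m$ is an isomorphism. Finally I would note isomorphic torsors give the same class, since an isomorphism $j\colon X\ra Y$ of torsors with $G$-rigidification commutes with both actions, so $c_Y$ computed using $j(x)$ equals $c_X$ computed using $x$.

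Second, from a cocycle to a torsor: given $c\colon G\sra A$ with $\partial c = 0$, set $X = A$ as an object of $\cat{D}$, let $\mu\colon X\times A\ra X$ be addition (so $m$ is visibly an isomorphism, making $X$ an $A$-torsor), and define the $\forget G$-action on sets by $g.x = c(g) + g\cdot x$. The cocycle identity guarantees associativity of this action, and normalising $c$ (which we may by \ref{prop:c-quasiiso-x}, or just by the cocycle relation at $g=1$) gives $1.x = x$. I would verify $(\forget X, h(-,X)) = (\forget A, h_A)$ is a $G$-set with $\cat{C}$-rigidification: the only thing to check is that for $f\in h_A(Y)$ the induced map $\forget G\times\forget Y\ra\forget A$, $(g,y)\mapsto c(g) + g\cdot f(y)$, lies in $h_A(G\times Y)$; this follows because it is the sum of $c\circ(\text{projection to }G)$, which is in $h_A(G\times Y)$ as $c\in h_A(G)$ and $h_A$ is a functor, and the term from the original module structure, which lies in $h_A(G\times Y)$ by the second bullet of \ref{def:module-w-rigidification}, and $h_A(G\times Y)$ is a group. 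The compatibility $g\mu(x,a)=\mu(gx,ga)$ on sets is the computation $c(g) + g\cdot(x+a) = (c(g)+g\cdot x) + g\cdot a$. Then I would check the two constructions are mutually inverse: starting from $c$, building $X$, and computing $c_X$ using the basepoint $0\in\forget X=\forget A$ recovers $c$ since $g.0 = c(g) + 0 = c(g) = 0 + c(g) = 0\cdot c(g)$; starting from a torsor $X$, choosing $x$, forming the cocycle $c_X$, and building a new torsor, the map $a\mapsto x\, a$ from the new torsor to $X$ is an isomorphism of $A$-torsors with $G$-rigidification.

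The main obstacle is not any single computation — all the cocycle manipulations are standard — but rather making sure throughout that every map we write down genuinely lives in the appropriate rigidification set rather than merely being a map of underlying sets. In particular, verifying that the twisted action in the second construction yields a $G$-module with $\cat{C}$-rigidification (the second bullet of \ref{def:module-w-rigidification}) is the place where the formalism does real work, and it is the analogue of the ``only difficulty lies in the formalism'' remark appearing in the treatment of the induced module. Everything else is bookkeeping with the coboundary formula $\partial f(x,y) = x.f(y) - f(xy) + f(x)$ from \ref{prop:coboundary-operator-well-defined}.
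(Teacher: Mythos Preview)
Your proposal is correct and follows essentially the same approach as the paper: both construct the cocycle $c_X$ from a torsor via the composition $G\sra X\ra X\times X\ra A$ and the torsor from a cocycle by twisting the $G$-action on $A$ by $c$, with the paper simply labelling the verifications you spell out as ``standard'' or ``easy to check.'' Your identification of the rigidification check on the twisted action as the one place where the formalism does genuine work matches the spirit of the paper's remark that such verifications are where the difficulty lies.
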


Let $R$ be a ring object in $\cat{D},$ which is furthermore commutative and
unitary. For the remainder of this section we assume that $A$ is an $R$-module,
i.\,e., we additionally require the existence of a morphism $R\times A\ra A$
subject to the usual conditions. We also assume that $G$ operates on $R$ via
ring endomorphisms in $\cat{D}$ and that on the level of sets, the action on $A$
is $R$-semi-linear, i.\,e., \[ g\cdot(r\cdot a) = (g\cdot r)\cdot (g\cdot a)
  \text { for all } g\in\forget G,r\in\forget R, a\in\forget A.\] 
We will call such an object a semi-linear $G$-module over $R$.

An exact sequence \[ 0 \ra M'\ra M\ra M''\ra 0\] of semi-linear $G$-modules is
an exact sequence of $G$-modules with $\cat{C}$-rigidification in the sense of
\ref{def:module-w-rigidification}, where we additionally require the
morphisms \[ 0 \ra h_{M'}(X) \ra h_M(X) \ra h_{M''}(X)\ra 0\] to be
$R$-linear.
In the above exact sequence, $M$ will be called an extension of $M''$ by $M'$.

An equivalence of extensions of $M''$ by $M'$, which will be denoted by
$M\approx\widetilde{M}$, is an isomorphism of functors
$h_M\oset{\cong}{\ra}h_{\widetilde{M}}$ such that for all $X$ the following
diagram commutes:
\[
  \begin{tikzcd}
    0 \arrow[r] & h_{M'}(X) \arrow[r] \arrow[equal]{d} & h_M(X) \arrow[r] \arrow[d, "\cong"] & h_{M''}(X) \arrow[r] \arrow[equal]{d} & 0 \\
0 \arrow[r] & h_{M'}(X) \arrow[r]           & h_{\widetilde{M}}(X) \arrow[r]      & h_{M''}(X) \arrow[r]           & 0.
  \end{tikzcd}
\]
The equivalence classes of extensions of $M''$ by $M'$ will be denoted by $\Ext(M'',M').$

\begin{theorem}
  \label{prop:h1-extensions}
  $H^1(G,A)\cong\Ext(R,A).$
  \begin{proof}
    Because of the similarity to \ref{prop:h1-torsors}, we only sketch the
    construction.
    
    Let \[ 0 \ra A \ra E \oset{p}{\ra} R \ra 0\] be an extension and denote by
    $1\in\forget R$ the unit in $R$. We can construct a cochain via \[ g \mapsto
      g\cdot e-e,\] where $e\in \forget E$ is any preimage of $1$.

    On the other hand, for a cochain $c\colon G\sra A$ define a $G$-action on
    $A\times R$ via \[ g\cdot(a,r) = ((g\cdot r)\cdot c(g) + g\cdot a,g\cdot
      r),\] which is a well-defined semi-linear $G$-module over $R$. The
    universal property of the product yields the exactness.
 \end{proof}
 \end{theorem}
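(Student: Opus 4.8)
The plan is to mimic the classical proof that $H^1(G,A) \cong \Ext(R,A)$ for discrete coefficients, following the template of \ref{prop:h1-torsors}, but being careful that every map we write down actually lives in the appropriate rigidification. We construct two mutually inverse maps.

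First I would go from extensions to cochains. Given an extension $0 \ra A \ra E \oset{p}{\ra} R \ra 0$ of semi-linear $G$-modules over $R$, pick $e\in\forget E$ with $p(e)=1$. Define $c\colon G\sra A$ on the level of sets by $g\mapsto g\cdot e - e$; this lands in $\forget A = \ker p$ because $p$ is $G$-equivariant and $R$-linear and $g\cdot 1 = 1$ (as $G$ acts by ring endomorphisms). To see $c\in h_A(G)$, I would use that $E$ is a $G$-module with $\cat{C}$-rigidification: the map $g\mapsto g\cdot e$ is the composite $G\ra G\times\cat{F}\bullet\ra G\times E\ra E$ using \ref{prop:top-cat-has-const-morph} and the second bullet of \ref{def:module-w-rigidification}, hence lies in $h_E(G)$; subtracting the constant map $e$ (again \ref{prop:top-cat-has-const-morph}) keeps us in $h_E(G)$ since $h_E(G)$ is a subgroup; and the image is contained in $\forget A$, so by exactness of $0\ra h_A(G)\ra h_E(G)\ra h_{M''}(G)$ it comes from $h_A(G)$. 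The cocycle identity $c(gh) = g\cdot c(h) + c(g)$ and the independence of the choice of $e$ (another choice differs by an element of $\forget A$, changing $c$ by a coboundary) are the standard computations. Here $R$-semilinearity plays no role yet — it will be forced in the reverse direction.

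Second I would go from cochains to extensions. Given a cochain $c\colon G\sra A$, set $E = A\times R$ as an object of $\cat{D}$ (product of ring/module objects), with the obvious $A$-module structure, projection $p = \pi_R\colon E\ra R$, and inclusion $A\ra E$, $a\mapsto(a,0)$. Define the $\forget G$-action on $\forget E$ by $g\cdot(a,r) = ((g\cdot r)\cdot c(g) + g\cdot a,\ g\cdot r)$. One checks this is an action using the cocycle condition on $c$ together with $R$-semilinearity of the $G$-action on $A$ — this is precisely where semilinearity enters, since the twisting term $(g\cdot r)\cdot c(g)$ must interact correctly with the scalar $r$. The resulting rigidification $h_E := h_{A\times R}$ is automatically a $G$-module with $\cat{C}$-rigidification: the only nonformal point is the second bullet of \ref{def:module-w-rigidification}, i.e.\ that $f\in h_E(X)$ implies the twisted multiplication map $G\times X\ra E$ lies in $h_E(G\times X)$, which follows because it is built from maps already known to be in the rigidification of $A$ and of $R$ plus the cochain $c\in h_A(G)$. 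The sequence $0\ra h_A(X)\ra h_E(X)\ra h_R(X)\ra 0$ is short exact for each $X$ — exactness in the middle and on the right uses the universal property of the product $A\times R$, exactly as asserted — so this is a genuine extension in $\Ext(R,A)$.

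Finally I would check that the two constructions are mutually inverse up to the appropriate equivalence relations, and that both respect the equivalences (coboundary on one side, $\approx$ on the other). Starting from $c$, building $E=A\times R$, and extracting a cochain with the choice $e=(0,1)$ returns $c$ on the nose. Starting from an extension $E$ with chosen $e$, building $A\times R$, and comparing: the map $E\ra A\times R$, $x\mapsto (x - p(x)\cdot e,\ p(x))$ is the required equivalence of extensions, and a different choice of $e$ changes $c$ by a coboundary, which by the $n=1$ case amounts to an isomorphism of extensions — so the class in $\Ext(R,A)$ is well defined and the map descends to $H^1(G,A)$. The main obstacle, as in \ref{prop:h1-torsors}, is not the algebra but the bookkeeping: verifying at each step that the set-theoretic maps we write down genuinely lie in the rigidifications $h_A$, $h_E$, $h_R$ and assemble into morphisms of $G$-modules with $\cat{C}$-rigidification. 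Since this is entirely parallel to the torsor case, I would only sketch it, as the statement's proof already signals.
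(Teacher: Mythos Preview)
Your proposal is correct and follows essentially the same approach as the paper: both directions use the identical constructions $g\mapsto g\cdot e - e$ for the cochain and $g\cdot(a,r) = ((g\cdot r)\cdot c(g) + g\cdot a,\, g\cdot r)$ for the twisted action on $A\times R$. You have simply filled in the details the paper omits, including the rigidification bookkeeping and the explicit equivalence $x\mapsto (x - p(x)\cdot e,\, p(x))$, which the paper leaves implicit under its reference to \ref{prop:h1-torsors}.
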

 \begin{remark}
   An alternative to the construction of \ref{prop:h1-extensions} goes as
   follows: As $\cat{D}$ is a topological category and all limits exist, we get
   an object and a morphism $X=p^{-1}(1)\ra E.$ It follows that we have an
   action $X\times A\ra X$ given by addition in $E$ and that the composition
    \[
      \begin{tikzcd}
        X \times A \arrow{r}{(\id,+)} & X \times X \arrow{rr}{(\id,x_2-x_1)} &&
        X\times E
      \end{tikzcd}
    \] factors through a morphism $X\times A\ra X\times A$ and induces the identity,
    i.\,e., $X$ is a $A$-torsor. It is also evident that $X$ inherits a
    $G$-rigidification from $E$.

    Directly
    constructing an extension from a torsor $X$ is regrettably not
    straight-forward, as it is very cumbersome to define the correct $R$-module
    structure on $X\times R$.
 
 \end{remark}
\section{Shapiro's Lemma for Topologised Groups}
\label{sec:shapiro-top-grps}
\begingroup
\allowdisplaybreaks

We will now prove Shapiro's lemma for the induction of subgroups, i.\,e., we
will assume in this section that $M=1$. Later on we will also prove
Shapiro's lemma for actual monoids,
cf.~\ref{sec:shapiro-for-monoids}.

Let $H\leq G$ be a subgroup of $G$ and assume the existence of a map \[
  {}_H(-)\colon G \ra H\] in $\cat{C}$ with the following properties:

\begin{itemize}
\item ${}_H(1)=1$,
\item ${}_H(hg)=h\cdot{}_H(g)$ for all $h\in H,g\in G$
\end{itemize}

\begin{remark}
  Instead of requiring the existence of such a morphism in $\cat{C},$ one could
  also construct it as follows: Assume that the push-out
  $H\backslash G$ of the following diagram exists:
  \[
\begin{tikzcd}
H \times G \arrow[r, "\pi_2"] \arrow[d, "\mu"] & G \arrow[d, "p"] \\
G \arrow[r]                                    & H\backslash G   
\end{tikzcd}
  \]
  $H\backslash G$ is then the space of right cosets of $H$ in $G$. Assume the
  existence of a section $s\colon H\backslash G\ra G$ in $\cat{C}$ with $p\circ
  s=\id_{H\backslash G}$ and with the neutral element contained in the image of
  $s$. We would then define \[ {}_H(g) = g s(p(g))^{-1}.\] Note the similarity
  with $(-)_N,$ which was defined as $ (g)_N = s(\pi(g))^{-1}g.$ As we wanted to
  use the same convention for the action of $G$ on $\Ind^H_G(A)$ as in the
  literature, we have to use a different convention here. However, we
  would then need to check many basic properties of this construction, which
  wouldn't shed any additional light on what actually happens.
\end{remark}

\begin{example}
  These requirements are always satisfied if $G$ is an analytic group and $H$ a
  closed subgroup, cf.~\cite[section III.1.6]{MR979493}.
\end{example}

\begin{definition}
  Consider the following maps:
  \begin{enumerate}
    \item $\alpha_n\colon C^n(G,\Ind^H_G(A)) \ra C^n(H,A)$ given by
      \[\alpha_n(f)(h_1,\dots, h_n) = f(h_1,\dots,h_n,1).\] Note that our
      formalism ensures that this is a well-defined map.
    \item $\beta_n\colon C^n(H,A)\ra C^n(G,\Ind^H_G(A))$  by
      \begin{align*}\beta_n(f)(g_1,\dots,g_n,x)=&{}_H(x) f({}_H(x)^{-1}{}_H(xg_1),
                                                  {}_H(xg_1)^{-1}{}_H(xg_1g_2), \dots,\\
        &\quad\quad\quad\quad{}_H(xg_1\dots
        g_{n-1})^{-1}{}_H(xg_1\dots g_n)).\end{align*} As we can express
      $\beta_n(f)$ in a (very large) diagram in $\cat{C},$ it again gives a
      well-defined element in $h_A(G^n\times G)$, and we immediately verify that
      it indeed lies in $C^n(G,\Ind^H_G(A)).$
    \item $\kappa_{n+1}\colon C^{n+1}(G,\Ind^H_G(A)) \ra C^n(G,\Ind^H_G(A))$ given
      by \begin{align*}
           \MoveEqLeft[3]
           \kappa_{n+1}f(g_1,\dots,g_n,x)\\
           \begin{split}
             ={}& f(x^{-1}{}_H(x), {}_H(x)^{-1}{}_H(xg_1),\\
             &\quad\quad {}_H(xg_1)^{-1}{}_H(xg_1g_2),\dots, {}_H(xg_1\dots g_{n-1})^{-1}{}_H(xg_1\dots g_n),x)\\
           & + \sum_{i=1}^n (-1)^i f(g_1,\dots,g_{i},
           (xg_1\dots g_i)^{-1}{}_H(xg_1\dots g_{i}),\\
           &\quad\quad\quad\quad\quad\quad {}_H(xg_1\dots
           g_i)^{-1}{}_H(xg_1\dots g_{i+1}), \dots,\\
            &\quad\quad\quad\quad\quad\quad {}_H(xg_1\dots g_{n-1})^{-1}
        {}_H(xg_1\dots g_{n}),x).
      \end{split}
         \end{align*}
         The summands of $\kappa_{n+1}$ are given by morphisms in $\cat{C}$
         followed by $f,$ so indeed $\kappa_{n+1}(f)\in X^n(G,\Ind^H_G(A))$ and
         also in $C^n(G,\Ind^H_G(A)).$
  \end{enumerate}

\end{definition}

\begin{remark}
  While we can adapt the definition of $\beta$ to also work in a monoid setting
  by explaining what the map is supposed to do on the monoid part, this is no
  longer true for $\kappa.$ We will prove later in \ref{sec:shapiro-for-monoids}
  that Shapiro's lemma still holds in the monoid setting.
\end{remark}

The proof of Shapiro's lemma now consists of the
following few lemmata which show that $\alpha_\bullet$ and $\beta_\bullet$ are
quasi-isomorphisms. Their proofs are routine, excruciatingly unenlightening, and
given only for sake of completeness.

\begin{lemma}
  $\alpha_\bullet$ and $\beta_\bullet$ are maps of chain complexes, i.\,e., they
  commute with $\partial$.
  \begin{proof}
    First,
    \begin{align*}
      \MoveEqLeft[6]
      \partial \alpha_n f(h_1,\dots, h_{n+1}) \\
        ={}& h_{1} \alpha_n f (h_2,\dots, h_{n+1}) + (-1)^{n+1} \alpha_n f (h_1,\dots, h_n) \\
           & + \sum_{i=1}^n (-1)^i \alpha_n f(h_1,\dots,h_{i-1}, h_ih_{i+1},h_{i+2},\dots,h_{n+1}) \\
        ={}& h_{1} f(h_2,\dots, h_{n+1}, 1) + (-1)^{n+1} f (h_1,\dots, h_n, 1) \\
           & + \sum_{i=1}^n (-1)^i f(h_1,\dots,h_{i-1}, h_ih_{i+1},h_{i+2},\dots,h_{n+1},1) \\
        ={}& f(h_2,\dots, h_{n+1}, 1\cdot h_{1}) + (-1)^{n+1} f (h_1,\dots, h_n, 1) \\
        & + \sum_{i=1}^n (-1)^i f(h_1,\dots,h_{i-1}, h_ih_{i+1},h_{i+2},\dots,h_{n+1},1) \\
        ={}& \partial f (h_1,\dots,h_{n+1}, 1)\\
        ={}& \alpha_{n+1}\partial f (h_1,\dots,h_{n+1}).
    \end{align*}

    Secondly,
    \begin{align*}
      \MoveEqLeft[6]
      \partial \beta_n f (g_1,\dots, g_{n+1}, x) \\
        ={}& \beta_n f(g_2,\dots,g_{n+1}, xg_1) + (-1)^{n+1} \beta_n f(g_1,\dots,g_n,x)\\
           & + \sum_{i=1}^n (-1)^i \beta_n f(g_1,\dots,g_{i-1},g_ig_{i+1},g_{i+2},\dots, g_{n+1},x) \\
           ={}& {}_H(xg_1)f\bigl({}_H(xg_1)^{-1}{}_H(xg_1g_2),{}_H(xg_1g_2)^{-1}{}_H(xg_1g_2g_3),\dots,\\
           & \quad\quad\quad\quad\quad {}_H(xg_1\dots g_n)^{-1}{}_H(xg_1\dots g_{n+1})\bigr) \\
           & + \sum_{i=1}^n (-1)^i {}_H(x) f\bigl( {}_H(x)^{-1}{}_H(xg_1), {}_H(xg_1)^{-1}{}_H(xg_1g_2),\dots,\\
           & \quad\quad\quad\quad\quad\quad\quad\quad\quad {}_H(xg_1\dots g_{i-1})^{-1}{}_H(xg_1\dots g_{i+1}),\\
           & \quad\quad\quad\quad\quad\quad\quad\quad\quad {}_H(xg_1\dots g_{i+1})^{-1}{}_H(xg_1\dots g_{i+2}),\dots, \\
           & \quad\quad\quad\quad\quad\quad\quad\quad\quad {}_H(xg_1\dots g_n)^{-1}{}_H(xg_1\dots g_{n+1}))\bigr) \\
           & + (-1)^{n+1} {}_H(x) f({}_H(x)^{-1}{}_H(xg_1),\dots,{}_H(xg_1\dots g_{n-1})^{-1}{}_H(xg_1\dots g_n)) \\
           ={}& {}_H(x) \biggl( {}_H(x)^{-1}{}_H(xg_1)f(\dots)+\sum_{i=1}^n (-1)^if(\dots) + (-1)^{n+1}f(\dots) \biggr) \\
           ={}& {}_H(x) \partial f \bigl({}_H(x)^{-1}{}_H(xg_1), {}_H(xg_1)^{-1}{}_H(xg_1g_2),\dots,\\
           &\quad\quad\quad\quad\quad{}_H(xg_1\dots g_n)^{-1}{}_H(xg_1\dots g_{n+1})\bigr) \\
           ={}& \beta_{n+1}\partial f (g_1,\dots,g_{n+1},x).
    \end{align*}
  \end{proof}
\end{lemma}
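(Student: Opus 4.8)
The plan is to verify the two cochain identities $\partial\circ\alpha_n=\alpha_{n+1}\circ\partial$ and $\partial\circ\beta_n=\beta_{n+1}\circ\partial$ by direct expansion of the coboundary formula from \ref{prop:coboundary-operator-well-defined}. Since $\alpha_n$, $\beta_n$ and all the intervening maps are assembled from morphisms of $\cat{C}$ precomposed with elements of $h_A$ (as already noted just after the definitions of these maps), it suffices to check the two equalities on the level of underlying set maps; that everything then lands in the appropriate $h_A(\cdot)$ and that the identities hold there is automatic.

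For $\alpha$ one may argue conceptually: $\alpha_n$ is the composite of the restriction of cochains along $\forget H\hookrightarrow\forget G$ --- a chain map because $H\hookrightarrow G$ is a morphism of monoids in $\cat{C}$ --- with the map on $H$-cochains induced by $\mathrm{ev}_1\colon\Ind_G^H A\ra A$, $\phi\mapsto\phi(1)$. The latter is a morphism of $\forget H$-modules with $\cat{C}$-rigidification: the $G$-action on $\Ind_G^H A$ being right translation, $\mathrm{ev}_1(h.\phi)=\phi(1\cdot h)=\phi(h)=h.\phi(1)=h.\mathrm{ev}_1(\phi)$ by the defining equivariance $\phi(hg)=h.\phi(g)$. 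In keeping with the computational style elsewhere, one may instead simply expand $\partial\alpha_n f(h_1,\dots,h_{n+1})$, replace each $\alpha_n f(\dots)$ by $f(\dots,1)$, rewrite the leading summand via the induced-module equivariance as $h_1.f(h_2,\dots,h_{n+1},1)=f(h_2,\dots,h_{n+1},1\cdot h_1)$, and recognise the result as $\partial f(h_1,\dots,h_{n+1},1)=\alpha_{n+1}\partial f(h_1,\dots,h_{n+1})$.

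For $\beta$, set $a_j={}_H(xg_1\cdots g_{j-1})^{-1}{}_H(xg_1\cdots g_j)\in\forget H$ with the convention $xg_1\cdots g_0=x$, so that $\beta_n f(g_1,\dots,g_n,x)={}_H(x).f(a_1,\dots,a_n)$ by definition, and note the two elementary identities ${}_H(xg_1)={}_H(x)\cdot a_1$ and $a_i a_{i+1}={}_H(xg_1\cdots g_{i-1})^{-1}{}_H(xg_1\cdots g_{i+1})$, both equalities of morphisms of $\cat{C}$. Expanding $\partial\beta_n f(g_1,\dots,g_{n+1},x)$ termwise: the leading term $\beta_n f(g_2,\dots,g_{n+1},xg_1)$ equals ${}_H(xg_1).f(a_2,\dots,a_{n+1})={}_H(x).\bigl(a_1.f(a_2,\dots,a_{n+1})\bigr)$; the summand with index $i$ equals $(-1)^i\,{}_H(x).f(a_1,\dots,a_{i-1},a_i a_{i+1},a_{i+2},\dots,a_{n+1})$, since merging $g_i$ and $g_{i+1}$ into one argument of $\beta_n$ replaces the partial product $xg_1\cdots g_i$ by $xg_1\cdots g_{i+1}$ and hence collapses $(a_i,a_{i+1})$ into $a_i a_{i+1}$ while re-indexing the later entries; and the trailing term equals $(-1)^{n+1}\,{}_H(x).f(a_1,\dots,a_n)$. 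Pulling ${}_H(x)$ out of the entire alternating sum leaves exactly $\partial f(a_1,\dots,a_{n+1})$, whence $\partial\beta_n f(g_1,\dots,g_{n+1},x)={}_H(x).\partial f(a_1,\dots,a_{n+1})=\beta_{n+1}\partial f(g_1,\dots,g_{n+1},x)$.

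The only step beyond bookkeeping is the $\beta$ computation: tracking how the substitutions $g_i\mapsto g_ig_{i+1}$ and $x\mapsto xg_1$ in the arguments of $\beta_n$ propagate through the nested partial products $xg_1\cdots g_k$ inside ${}_H(-)$, and checking that the prefactor ${}_H(x)$ can be extracted uniformly from every summand --- which uses that $A$ is genuinely a $\forget H$-module, so that the action is associative and $\Z$-linear. The $\alpha$ identity, and the passage from set maps to statements about $h_A$, are routine.
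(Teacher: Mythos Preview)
Your proof is correct and follows the same direct-expansion approach as the paper; your introduction of the abbreviation $a_j={}_H(xg_1\cdots g_{j-1})^{-1}{}_H(xg_1\cdots g_j)$ streamlines the $\beta$ computation but does not change its substance. The conceptual description of $\alpha_n$ as restriction along $H\hookrightarrow G$ followed by the $H$-equivariant evaluation $\mathrm{ev}_1$ is a pleasant addition that the paper does not make explicit, but it is equivalent to the paper's direct verification.
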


\begin{lemma}
  $\alpha_\bullet\circ\beta_\bullet = \id_{C^\bullet(H,A)}$.
  \begin{proof}
    Note that ${}_H(-)$ restricted to $H$ is the identity. We hence have
    \begin{align*}
      \alpha_n\beta_n f (h_1,\dots,h_n) ={}& \beta_nf(h_1,\dots, h_n, 1) \\
      \begin{split}
        ={}& {}_H(1) f\bigl({}_H(1)^{-1}{}_H(1\cdot h_1), {}_H(1\cdot h_1)^{-1}{}_H(1\cdot h_1h_2), \dots,\\
        &\quad\quad\quad\quad\quad{}_H(1\cdot h_1\dots h_{n-1})^{-1} {}_H(1\cdot h_1\dots h_n)\bigr) \\
      ={}& f(h_1,\dots, h_n).
      \end{split}
    \end{align*}
  \end{proof}
\end{lemma}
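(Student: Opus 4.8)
The plan is to evaluate $\alpha_n\beta_n$ directly on cochains and to reduce everything to the single structural observation that the correction morphism ${}_H(-)\colon G\ra H$ restricts to the identity on $H$. I would record that observation first, since it is the only ingredient beyond bookkeeping: for $h\in H$ the two defining properties of ${}_H(-)$ give ${}_H(h)={}_H(h\cdot 1)=h\cdot{}_H(1)=h\cdot 1=h$, and in particular ${}_H(1)=1$.

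Then I would fix a degree $n$, a cochain $f\in C^n(H,A)$, and elements $h_1,\dots,h_n\in H$. Unwinding the definition of $\alpha_n$ rewrites $\alpha_n\beta_n f(h_1,\dots,h_n)$ as $\beta_n f(h_1,\dots,h_n,1)$. Substituting $x=1$ and $g_i=h_i$ into the defining formula for $\beta_n$, and using that every partial product $h_1\cdots h_k$ again lies in the subgroup $H$, each factor ${}_H(1\cdot h_1\cdots h_k)$ collapses to $h_1\cdots h_k$ by the observation above, while the leading factor ${}_H(1)$ collapses to the neutral element. The $k$-th argument of $f$ then telescopes to $(h_1\cdots h_{k-1})^{-1}(h_1\cdots h_k)=h_k$, so $\beta_n f(h_1,\dots,h_n,1)=f(h_1,\dots,h_n)$. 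Since this holds in every degree, $\alpha_\bullet\circ\beta_\bullet=\id_{C^\bullet(H,A)}$.

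The only point that is not completely immediate — and it has already been settled where $\alpha_\bullet$ and $\beta_\bullet$ were introduced — is that $f\mapsto f(-,\dots,-,1)$ and the diagram defining $\beta_n$ genuinely land in the rigidified cochain groups $C^\bullet(H,A)$ and $C^\bullet(G,\Ind^H_G(A))$; once that formalism is granted there is no real obstacle, and the identity is a single short telescoping computation. (By contrast, the reverse composite $\beta_\bullet\circ\alpha_\bullet$ will only equal the identity up to the chain homotopy built from the $\kappa_\bullet$ defined above, and that is where the real substance of Shapiro's lemma sits.)
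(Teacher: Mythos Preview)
Your proof is correct and follows exactly the same approach as the paper: reduce to the fact that ${}_H(-)$ restricted to $H$ is the identity, then evaluate $\beta_n f(h_1,\dots,h_n,1)$ directly and watch the arguments telescope to $h_1,\dots,h_n$. You even supply the one-line verification that ${}_H(h)=h$ for $h\in H$, which the paper simply asserts.
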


\begin{lemma}
  $\partial\circ\kappa_n + \kappa_{n+1}\circ\partial = \beta_n\circ\alpha_n -
  \id_{C^n(G,\Ind^H_G(A))},$ i.\,e., $\kappa_\bullet$ is a chain homotopy from
  $\beta_\bullet\circ\alpha_\bullet$ to the identity.
  \begin{proof}
    This is going to be as bad as it looks. Let us first compute
    $\partial\circ\kappa_n$ and $\kappa_{n+1}\circ\partial$, subtract
    $(\beta_n\circ\alpha_n - \id)$ from this and show that the sum of the
    remaining terms is zero. We first compute $\partial\circ\kappa_n$:
    \begin{align*}
      \MoveEqLeft[6]
      (\partial \circ \kappa_n) f (g_1,\dots, g_n, x) \\
      \begin{split}
        ={}& \kappa_n(f)(g_2,\dots,g_n,xg_1) + (-1)^n\kappa_n(f)(g_1,\dots,g_{n-1},x) \\
         & + \sum_{i=1}^{n-1} (-1)^i \kappa_n(f)(g_1,\dots,g_{i-1},g_ig_{i+1}, g_{i+2},\dots,g_n,x). \\
      \end{split}
    \end{align*}
    We can also expand \begin{align*} \MoveEqLeft[3]
                         \tag{$\cancer$}\kappa_{n}f(g_2,\dots,g_n,xg_1)\\
                         ={}  &f((xg_1)^{-1}{}_H(xg_1), {}_H(xg_1)^{-1}{}_H(xg_1g_2),\\
                         &\quad\quad{}_H(xg_1)^{-1}{}_H(xg_1g_2),\dots, {}_H(xg_1\dots g_{n-1})^{-1}{}_H(xg_1\dots g_n),xg_1)\\
                              &+ \sum_{j=1}^n (-1)^j f(g_2,\dots,g_{j+1},
                                (xg_1\dots g_{j+1})^{-1}{}_H(xg_1\dots g_{j+1}),\\
                              &\quad\quad\quad\quad\quad\quad\quad\quad
                                {}_H(xg_1\dots
                                g_{j+1})^{-1}{}_H(xg_1\dots g_{j+2}), \dots,\\
                              &\quad\quad\quad\quad\quad\quad\quad\quad
                                {}_H(xg_1\dots g_{n-1})^{-1} {}_H(xg_1\dots
                                g_{n}),xg_1),
                         \\
                         \MoveEqLeft[6]
                         (-1)^n\kappa_{n}f(g_1,\dots,g_{n-1},x)\\
                         ={}\tag{$\Diamond$}& (-1)^n f(x^{-1}{}_H(x), {}_H(x)^{-1}{}_H(xg_1),\\
                         &\quad\quad\quad\quad{}_H(xg_1)^{-1}{}_H(xg_1g_2),\dots, {}_H(xg_1\dots g_{n-2})^{-1}{}_H(xg_1\dots g_{n-1}),x)\\
                              &\tag{$\star$} + \sum_{j=1}^{n-1} (-1)^{j+n}
                                f(g_1,\dots,g_{j},
                                (xg_1\dots g_{j})^{-1}{}_H(xg_1\dots g_{j}),\\
                              & \quad\quad\quad\quad\quad\quad\quad\quad
                                {}_H(xg_1\dots
                                g_{j})^{-1}{}_H(xg_1\dots g_{j+1}) \dots,\\
                              &\quad\quad\quad\quad\quad\quad\quad\quad
                                {}_H(xg_1\dots g_{n-2})^{-1} {}_H(xg_1\dots
                                g_{n-1}),x)
  \end{align*}
  and
  \begin{align*} \MoveEqLeft[4]
             (-1)^i \kappa_n(f)(g_1,\dots,g_{i-1},g_ig_{i+1}, g_{i+2},\dots,g_n,x)\\
    ={} &\tag{$\maltese.i$} (-1)^{i} f(x^{-1}{}_H(x), {}_H(x)^{-1}{}_H(xg_1),{}_H(xg_1)^{-1}{}_H(xg_1g_2),\dots,\\
   &\quad\quad \quad\quad{}_H(xg_1\dots g_{i-2})^{-1}{}_H(xg_1\dots g_{i-1}),\\
   & \quad\quad\quad\quad{}_H(xg_1\dots g_{i-1})^{-1}{}_H(xg_1\dots g_{i+1}),\\
   & \quad\quad\quad\quad{}_H(xg_1\dots g_{i+1})^{-1}{}_H(xg_1\dots g_{i+2}),\dots,\\
   & \quad\quad\quad\quad{}_H(xg_1\dots g_{n-1})^{-1}{}_H(xg_1\dots g_n),x)\\
    & \tag{$\LEFTCIRCLE.i$} + \sum_{j=1}^{i-1} (-1)^{i+j} f(g_1,\dots,g_{j},
           (xg_1\dots g_{j})^{-1}{}_H(xg_1\dots g_{j+1}),\\
           &\quad\quad\quad\quad\quad\quad\quad {}_H(xg_1\dots
             g_{j+1})^{-1}{}_H(xg_1\dots g_{j+2}), \dots,\\
           &\quad\quad\quad\quad\quad\quad\quad  {}_H(xg_1\dots g_{i-2})^{-1}
        {}_H(xg_1\dots g_{i-1}),\\
  &\quad\quad\quad\quad\quad\quad\quad {}_H(x g_1\dots g_{i-1})^{-1}{}_H(xg_1\dots g_{i+1}),\\
           &\quad\quad\quad\quad\quad\quad\quad {}_H(xg_1\dots g_{i+1})^{-1}{}_H(xg_1\dots g_{i+2})^{-1}, \dots,\\
           &\quad\quad\quad\quad\quad\quad\quad  {}_H(xg_1\dots g_{n-1})^{-1}
        {}_H(xg_1\dots g_{n}),x)\\
           &\tag{$\blacktriangleleft.i$} + \sum_{j=i}^{n-1} (-1)^{i+j} f(g_1,\dots, g_{i-1}, g_ig_{i+1},g_{i+2},\dots, g_{j+1},\\
 &\quad\quad\quad\quad\quad\quad\quad\quad   (xg_1\dots g_{j+1})^{-1}{}_H(xg_1\dots g_{j+1}),\\
&\quad\quad\quad\quad\quad\quad\quad\quad{}_H(xg_1\dots g_{j+1})^{-1}{}_H(xg_1\dots g_{j+2}),\dots,\\
&\quad\quad\quad\quad\quad\quad\quad\quad{}_H(xg_1\dots g_{n-1})^{-1}{}_H(xg_1\dots g_{n}),x ).
  \end{align*}
  On the other hand,\begin{align*}
      \MoveEqLeft[6]
      (\kappa_{n+1}\circ\partial) f (g_1,\dots, g_n, x) \\
      \begin{split}
        ={}& \partial(f)(x^{-1}{}_H(x), {}_H(x)^{-1}{}_H(xg_1),\\
        & \quad\quad\quad {}_H(xg_1)^{-1}{}_H(xg_1g_2),\dots, {}_H(xg_1\dots g_{n-1})^{-1}{}_H(xg_1\dots g_n),x)\\
        &+  \sum_{j=1}^n (-1)^j \partial(f)(g_1,\dots,g_j,(xg_1\dots g_j)^{-1}{}_H(xg_1\dots g_j),\\
        &\quad\quad\quad\quad\quad\quad\quad        {}_H(xg_1\dots g_j)^{-1}{}_H(xg_1\dots g_{j+1}), \dots,\\
        &\quad\quad\quad\quad\quad\quad\quad {}_H(xg_1\dots g_{n-1})^{-1}{}_H(xg_1\dots g_n),x)
      \end{split}
    \end{align*}
    and
     \begin{align*}
      \MoveEqLeft[10]
      (-1)^j \partial(f)(g_1,\dots,g_j,(xg_1\dots g_j)^{-1}{}_H(xg_1\dots g_j),\\
      \MoveEqLeft[10]
      \quad\quad
      \quad\quad
      \quad\quad
      {}_H(xg_1\dots g_j)^{-1}{}_H(xg_1\dots g_{j+1}), \dots,\\
      \MoveEqLeft[10]
      \quad\quad
      \quad\quad
      \quad\quad
      {}_H(xg_1\dots g_{n-1})^{-1}{}_H(xg_1\dots g_n),x)\\
       ={}&\tag{$\cancer.j$} (-1)^j f(g_2,\dots,g_j,(xg_1\dots g_j)^{-1}{}_H(xg_1\dots g_j),\\
       &\quad\quad\quad\quad{}_H(xg_1\dots g_j)^{-1}{}_H(xg_1\dots g_{j+1}), \dots,\\
       &\quad\quad\quad\quad{}_H(xg_1\dots g_{n-1})^{-1}{}_H(xg_1\dots g_n),xg_1)\\
       & \tag{$\blacktriangleright.j$}+ \sum_{i=1}^{j-1} (-1)^{i+j} f(g_1,\dots,g_{i-1}, g_ig_{i+1}, g_{i+2},\dots, g_j,\\
&\quad\quad\quad\quad\quad\quad\quad\quad(xg_1\dots g_j)^{-1}{}_H(xg_1\dots g_j),\\
&\quad\quad\quad\quad\quad\quad\quad\quad
{}_H(xg_1\dots g_j)^{-1}{}_H(xg_1\dots g_{j+1}),\dots,\\
&\quad\quad\quad\quad\quad\quad\quad\quad
{}_H(xg_1\dots g_{n-1})^{-1}{}_H(xg_1\dots g_n),x)\\
&\tag{\leftmoon$.j$}+ (-1)^{j+j} f(g_1,\dots, g_{j-1}, (xg_1\dots g_{j-1})^{-1}{}_H(xg_1\dots g_j),\\
       & \quad\quad\quad\quad\quad\quad{}_H(xg_1\dots g_j)^{-1}{}_H(xg_1\dots g_{j+1}), \dots,\\
      & \quad\quad\quad\quad\quad\quad{}_H(xg_1\dots g_{n-1})^{-1}{}_H(xg_1\dots g_n),x)\\
&\tag{\leftmoon$'.j$} + (-1)^{j+j+1} f(g_1,\dots, g_{j}, (xg_1\dots g_{j})^{-1}{}_H(xg_1\dots g_{j+1}),\\
& \quad\quad\quad\quad\quad\quad\quad{}_H(xg_1\dots g_{j+1})^{-1}{}_H(xg_1\dots g_{j+2}), \dots,\\
& \quad\quad\quad\quad\quad\quad\quad{}_H(xg_1\dots g_{n-1})^{-1}{}_H(xg_1\dots g_n),x)\\
&\tag{$\RIGHTCIRCLE.j$} + \sum_{i=j+2}^{n}(-1)^{i+j}f(g_1,\dots,g_j,(xg_1\dots g_j)^{-1}{}_H(xg_1\dots g_j),\\
& \quad\quad\quad\quad\quad\quad\quad\quad{}_H(xg_1\dots g_j)^{-1}{}_H(xg_1\dots g_{j+1}), \dots,\\
& \quad\quad\quad\quad\quad\quad\quad\quad{}_H(xg_1\dots g_{i-3})^{-1}{}_H(xg_1\dots g_{i-2}),\\
& \quad\quad\quad\quad\quad\quad\quad\quad{}_H(xg_1\dots g_{i-2})^{-1}{}_H(xg_1\dots g_{i}),\\
& \quad\quad\quad\quad\quad\quad\quad\quad{}_H(xg_1\dots g_{i})^{-1}{}_H(xg_1\dots g_{i+1}), \dots,\\
& \quad\quad\quad\quad\quad\quad\quad\quad {}_H(xg_1\dots g_{n-1})^{-1}{}_H(xg_1\dots g_n),x)\\
&\tag{$\star.j$}+ (-1)^{j+n+1}f(g_1,\dots,g_j,(xg_1\dots g_j)^{-1}{}_H(xg_1\dots g_j),\\
       & \quad\quad\quad\quad\quad\quad{}_H(xg_1\dots g_j)^{-1}{}_H(xg_1\dots g_{j+1}),\dots,\\
      & \quad\quad\quad\quad\quad\quad{}_H(xg_1\dots g_{n-2})^{-1}{}_H(xg_1\dots g_{n-1}),x).
     \end{align*}
     We furthermore expand
     \begin{align*}
       \MoveEqLeft[5]
      \partial(f)(x^{-1}{}_H(x), {}_H(x)^{-1}{}_H(xg_1),{}_H(xg_1)^{-1}{}_H(xg_1g_2),\dots, \\
       \MoveEqLeft[5]
       \quad\quad\quad {}_H(xg_1\dots g_{n-1})^{-1}{}_H(xg_1\dots g_n),x)\\
       ={}&\tag{$\circ$}  f({}_H(x)^{-1}{}_H(xg_1), {}_H(xg_1)^{-1}{}_H(xg_1g_2),\dots,\\
       & \quad\quad\quad {}_H(xg_1\dots g_{n-1})^{-1}{}_H(xg_1\dots g_n),{}_H(x))\\
       & \tag{$\Diamond'$}+ (-1)^{n+1}f(x^{-1}{}_H(x), {}_H(x)^{-1}{}_H(xg_1), {}_H(xg_1)^{-1}{}_H(xg_1g_2),\dots,\\
       & \quad\quad\quad\quad\quad {}_H(xg_1\dots g_{n-2})^{-1}{}_H(xg_1\dots g_{n-1}),x)\\
       & \tag{\textpilcrow}- f(x^{-1}{}_H(xg_1), {}_H(xg_1)^{-1}{}_H(xg_1g_2),\dots,\\
       & \quad\quad\quad {}_H(xg_1\dots g_{n-1})^{-1}{}_H(xg_1\dots g_n),x)\\
       & \tag{$\maltese$}+ \sum_{i=2}^{n} (-1)^i f(x^{-1}{}_H(x), {}_H(x)^{-1}{}_H(xg_1),\dots,\\
       & \quad\quad\quad\quad\quad\quad {}_H(xg_1\dots g_{i-3})^{-1}{}_H(xg_1\dots g_{i-2}),\\
       & \quad\quad\quad\quad\quad\quad {}_H(xg_1\dots g_{i-2})^{-1} {}_H(xg_1\dots g_i), \\
       & \quad\quad\quad\quad\quad\quad {}_H(xg_1\dots g_i)^{-1}{}_H(xg_1\dots g_{i+1}),\dots,\\
       & \quad\quad\quad\quad\quad\quad {}_H(xg_1\dots g_{n-1})^{-1}{}_H(xg_1\dots g_n), x).
     \end{align*}
     Clearly $(\circ)=\beta_n\circ\alpha_n(f)(g_1,\dots,g_n,x)$ and
     $(\star.n)=-f(g_1,\dots,g_n,x),$ so it remains to show that the other
     summands amount to zero.

     Note first that 
     \begin{align*}
       (\leftmoon.1) &= -(\text{\textpilcrow}),\\
       (\leftmoon'.j) &= -(\leftmoon.j+1)\quad\quad\quad\quad \text{ for } j=1,\dots,n-1,\\
       (\leftmoon'.n) &= 0,
     \end{align*}
     so in $\kappa_{n+1}\circ\partial$ all $(\leftmoon), (\leftmoon'),$ and
     $(\text{\textpilcrow})$-terms cancel. Furthermore it is immediately evident that
     \begin{gather*}
       (\Diamond) = -(\Diamond'),\\
       \sum_{j=1}^n (\cancer.j) =
       - (\cancer),\\
       \sum_{i=1}^{n-1} (\maltese'.i) = -(\maltese), \text{ and }\\
       \sum_{i=1}^{n-1} (\star.j) = - (\star).
     \end{gather*}
     It remains to show that
     \[
       \sum_{i=1}^{n} (\LEFTCIRCLE.i) = - \sum_{j=1}^n (\RIGHTCIRCLE.j)\]
     and \[
       \sum_{i=1}^{n} (\blacktriangleleft.i) = - \sum_{j=1}^n (\blacktriangleright.j).
     \]
     Write \begin{align*}
             F(i,j) ={}& f(g_1,\dots,g_{j},
           (xg_1\dots g_{j})^{-1}{}_H(xg_1\dots g_{j+1}),\\
           &\quad\quad {}_H(xg_1\dots
             g_{j+1})^{-1}{}_H(xg_1\dots g_{j+2}), \dots,\\
           &\quad\quad  {}_H(xg_1\dots g_{i-2})^{-1}
        {}_H(xg_1\dots g_{i-1}),\\
  &\quad\quad {}_H(x g_1\dots g_{i-1})^{-1}{}_H(xg_1\dots g_{i+1}),\\
           &\quad\quad {}_H(xg_1\dots g_{i+1})^{-1}{}_H(xg_1\dots g_{i+2})^{-1}, \dots,\\
           &\quad\quad  {}_H(xg_1\dots g_{n-1})^{-1}
             {}_H(xg_1\dots g_{n}),x),\\
             G(i,j) ={} &f(g_1,\dots, g_{i-1}, g_ig_{i+1},g_{i+2},\dots, g_{j+1},\\
 &\quad\quad   (xg_1\dots g_{j+1})^{-1}{}_H(xg_1\dots g_{j+1}),\\
&\quad\quad{}_H(xg_1\dots g_{j+1})^{-1}{}_H(xg_1\dots g_{j+2}),\dots,\\
&\quad\quad{}_H(xg_1\dots g_{n-1})^{-1}{}_H(xg_1\dots g_{n}),x ),
           \end{align*}
           then
           \begin{gather*}
             \sum_{i=1}^n (\LEFTCIRCLE.i) = \sum_{i=1}^n \sum_{j=1}^{i-1}
             (-1)^{i+j} F(i,j) = \sum_{1\leq j < i \leq n} (-1)^{i+j} F(i,j),\\
             \sum_{j=1}^n (\RIGHTCIRCLE.j) = \sum_{j=1}^n \sum_{i=j+2}^{n}
             (-1)^{i+j} F(i-1,j) = \sum_{1\leq j < i \leq n} (-1)^{i+j+1}
             F(i,j),\\
             \sum_{i=1}^n(\blacktriangleleft.i) = \sum_{i=1}^n \sum_{j=i}^{n-1}
             (-1)^{i+j} G(i,j) = \sum_{1\leq i \leq j \leq n-1} (-1)^{i+j} G(i,j), \text{ and}\\
             \sum_{j=1}^n(\blacktriangleright.j)=\sum_{j=1}^n \sum_{i=1}^{j-1}
             (-1)^{i+j} G(i,j-1) = \sum_{1\leq i \leq j \leq n-1} (-1)^{i+j-1} G(i,j),
           \end{gather*}
           so indeed their sum amounts to zero.
 \end{proof}
\end{lemma}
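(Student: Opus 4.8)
The strategy is a direct computation on cochains: there is no homological algebra available in the rigidified setting, so we simply evaluate both sides of the identity at an arbitrary tuple $(g_1,\dots,g_n,x)\in(\forget G)^n\times\forget G$, the last entry being the evaluation variable of the induced module (so that $f(g_1,\dots,g_n,x)$ abbreviates $\bigl(f(g_1,\dots,g_n)\bigr)(x)$), and expand. The ingredients are the explicit coboundary formula of \ref{prop:coboundary-operator-well-defined}, the right-translation $G$-action on $\Ind^H_G(A)$, and the definitions of $\alpha_n$, $\beta_n$, $\kappa_n$ and $\kappa_{n+1}$. Since every summand that appears is obtained by precomposing $f$ with an explicit morphism of $\cat{C}$, it is again a well-defined normalised cochain, so the whole manipulation stays inside the abelian group $C^n(G,\Ind^H_G(A))$ and rearrangements are legitimate.

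First I would expand $\partial(\kappa_n f)$. As $\kappa_n f\in C^{n-1}$, the coboundary produces a leading term, which by the right-translation action reads $\kappa_n f(g_2,\dots,g_n,xg_1)$; a drop-last term $(-1)^n\kappa_n f(g_1,\dots,g_{n-1},x)$; and $n-1$ merge terms $(-1)^i\kappa_n f(g_1,\dots,g_ig_{i+1},\dots,g_n,x)$. Substituting into each of these the two-part definition of $\kappa_n$---its leading summand and its alternating tail---turns this into a double sum. Symmetrically I would apply $\kappa_{n+1}$ to $\partial f\in C^{n+1}$ and then expand each of the $n+1$ resulting occurrences of $\partial f$ by the coboundary formula, obtaining another double sum. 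It pays to give each family of terms a name, together with the precise tuple at which $f$ is evaluated and the accompanying sign, exactly as one does when checking that a bar-type differential squares to zero.

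The heart of the proof is the pairwise cancellation of almost all of these families (some cancel within $\kappa_{n+1}\partial f$ itself, others between $\partial\kappa_n f$ and $\kappa_{n+1}\partial f$). The mechanism is always the same: adjacent entries of the form ${}_H(w)^{-1}{}_H(w')$ multiply, so that merging two consecutive $g_i$'s inside $\kappa$ yields the same tuple as the corresponding coboundary merge, and the shifted coset-representative terms (those carrying an entry $(xg_1\cdots g_j)^{-1}{}_H(xg_1\cdots g_{j+1})$ or ${}_H(xg_1\cdots g_{j-1})^{-1}{}_H(xg_1\cdots g_{j+1})$) telescope against one another. After the matched families are struck out, exactly two summands survive: the leading term of the first $\partial f$-expansion, which after the identity $x\cdot\bigl(x^{-1}{}_H(x)\bigr)={}_H(x)$ is precisely $(\beta_n\circ\alpha_n)(f)(g_1,\dots,g_n,x)$; and the drop-last term of the $j=n$ summand of $\kappa_{n+1}\partial f$, which discards the inserted correction $(xg_1\cdots g_n)^{-1}{}_H(xg_1\cdots g_n)$ and contributes $-f(g_1,\dots,g_n,x)$. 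This is exactly the asserted identity; combined with the two preceding lemmata---that $\alpha_\bullet,\beta_\bullet$ are chain maps and $\alpha_\bullet\circ\beta_\bullet=\id_{C^\bullet(H,A)}$---it completes Shapiro's lemma for topologised groups.

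I expect the only genuine obstacle to be organisational: two long double sums must be matched term for term with consistent signs, and care is needed at the degenerate ends of the index ranges ($i$ or $j$ equal to $1$ or to $n$), since that is exactly where the two surviving summands originate and where a handful of the pairwise cancellations need a separate check. Conceptually nothing is at stake---$\kappa_{n+1}$ is nothing but the insertion of a path through the right cosets of $H$ in $G$, and this computation is the cochain-level shadow of the null-homotopy of that path deformation---so beyond patience and careful bookkeeping the verification needs no new idea.
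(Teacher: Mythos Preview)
Your proposal is correct and follows essentially the same approach as the paper: a brute-force expansion of both $\partial\kappa_n f$ and $\kappa_{n+1}\partial f$ into labelled families of terms, followed by pairwise cancellation (telescoping of the coset-representative entries and matching of merge terms), leaving precisely $(\beta_n\alpha_n)(f)$ and $-f$ as the sole survivors. The paper carries this out in full symbolic detail with named tags for each family, but the strategy, the identification of the two surviving terms, and your warning about the degenerate index endpoints all match the paper's argument exactly.
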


We immediately deduce Shapiro's lemma.

\begin{theorem}[Shapiro's lemma]
  \label{prop:shapiro-top-grps}
  In the derived category of abelian groups, \[ C^\bullet(G,\Ind^H_G(A))\cong
    C^\bullet(H,A).\] Especially \[ H^n(G,\Ind^H_G(A))\cong H^n(H,A) \text{ for
      all } n.\]
\end{theorem}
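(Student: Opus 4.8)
The plan is to simply read the statement off the three lemmata just established, which together say precisely that $\alpha_\bullet$ and $\beta_\bullet$ are mutually inverse up to cochain homotopy. The first lemma gives that $\alpha_\bullet\colon C^\bullet(G,\Ind^H_G(A))\ra C^\bullet(H,A)$ and $\beta_\bullet\colon C^\bullet(H,A)\ra C^\bullet(G,\Ind^H_G(A))$ are morphisms of cochain complexes of abelian groups. The second gives the strict identity $\alpha_\bullet\circ\beta_\bullet=\id_{C^\bullet(H,A)}$. The third exhibits $\kappa_\bullet$ as a cochain homotopy with $\partial\kappa+\kappa\partial=\beta_\bullet\circ\alpha_\bullet-\id$, i.e.\ $\beta_\bullet\circ\alpha_\bullet\simeq\id_{C^\bullet(G,\Ind^H_G(A))}$. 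Hence $\alpha_\bullet$ and $\beta_\bullet$ are mutually inverse homotopy equivalences of cochain complexes.

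From here the conclusion is formal. A homotopy equivalence of complexes is in particular a quasi-isomorphism: since cohomology is homotopy-invariant, applying $H^n$ to $\alpha_\bullet\circ\beta_\bullet=\id$ and to $\beta_\bullet\circ\alpha_\bullet\simeq\id$ shows that $H^n(\alpha_\bullet)$ and $H^n(\beta_\bullet)$ are mutually inverse isomorphisms, which already proves $H^n(G,\Ind^H_G(A))\cong H^n(H,A)$ for every $n$. As quasi-isomorphisms are exactly the maps inverted when forming the derived category of abelian groups, $\alpha_\bullet$ (equivalently $\beta_\bullet$) becomes an isomorphism there, yielding $C^\bullet(G,\Ind^H_G(A))\cong C^\bullet(H,A)$ in the derived category.

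There is essentially no obstacle left at this stage — the entire difficulty was the bookkeeping carried out in the third lemma. The only points one might still want to glance at are cosmetic: that $\alpha_\bullet$, $\beta_\bullet$ and $\kappa_\bullet$ are genuine maps in the rigidified formalism (already verified when they were defined, each being obtained from an element of some $h_A(-)$ by precomposition with morphisms of $\cat{C}$, the lone $x_1.f(\dots)$-type summand being covered by the second axiom of \ref{def:module-w-rigidification}), and that the homotopy identity degenerates correctly in the low degrees $n=0,1$, where several of the index ranges appearing in $\partial\circ\kappa_n$ and $\kappa_{n+1}\circ\partial$ are empty.
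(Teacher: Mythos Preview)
Your proposal is correct and matches the paper's approach exactly: the paper simply writes ``We immediately deduce Shapiro's lemma'' after the three lemmata and gives no further proof, so the deduction you spell out (homotopy equivalence $\Rightarrow$ quasi-isomorphism $\Rightarrow$ isomorphism in the derived category) is precisely what the paper leaves implicit.
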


\endgroup

\section{A Hochschild-Serre Spectral Sequence}
\label{sec:hochschild-serre}
We devote this section to proving a Hochschild-Serre spectral sequence in a
rather general fashion. Constructing cochains of course happens in
$\Hom_{\cat{Set}}(\forget G^n,A)$ and is
done by the usual tedious calculations. As $G$ is only a monoid, extra care is
required.
Showing that these cochains stem from
(then necessarily unique) elements in $X^\bullet$ poses an additional difficulty.

The spectral sequence will indeed follow from the following spectral sequence
attached to the filtered complex $I^\bullet C^\bullet.$

\begin{definition}
  As for all $n$ the filtration $I^\bullet C^n$ is a finite filtration and all
  $I^j C^\bullet$ form a subcomplex, there is a $E_1$-spectral sequence \[
    ss(I^\bullet C^\bullet)^{p,q}_1\Longrightarrow H^{p+q}(G,A), \]
  where the $E_1$-terms are defined as \[  ss(I^\bullet C^\bullet)_1^{p,q}=\frac{\ker\left(  
                                        I^pC^{p+q}/I^{p+1}C^{p+q}\oset{\partial}{\ra} I^pC^{p+q+1}/I^{p+1}C^{p+q+1}\right)}{\image\left(
                                        I^pC^{p+q-1}/I^{p+1}C^{p+q-1} \oset{\partial}{\ra} I^pC^{p+q}/I^{p+1}C^{p+q}\right)},\]
  cf.~e.\,g.~\cite[(2.2.1)]{MR2392026}.
\end{definition}

\begin{definition}
  To simplify reading, we will use the following notational convention: The first
  time a variable is used, a superscript will denote the set it belongs to. For
  example, instead of ``Let $\underline{x}\in G^p.$ Then define
  $f(\underline{x})=\ldots$'' we will simply write ``Define
  $f(\mbof{\underline{x}}{G^p})=\ldots$''
\end{definition}

\subsection{The Meaning of $C^i(G/N,C^j(N,A))$}
\label{sec:meaning-of-ci-g-mod-n-cj-n-a}
In the previous section, we fixed a concrete category $\cat{C}$ to encapsulate
our topological data. Our setup allowed us to give meaning to $C^j(N,A)$ for
$G$-modules $A$. It is
however very unclear how we can give the abstract module $C^j(N,A)$ again the
structure of an object in $\cat{C}.$

If $\cat{C}$ is the category of Hausdorff topological spaces, one can topologise
$C^j(N,A)$ with the compact-open topology, which if we further restrict to
compactly generated spaces, has somewhat nice properties and can be called
canonical. But computing cohomology, we are presented with the issue that images
of differentials need not be closed and one subsequently loses the Hausdorff
property, cf.~\cite{MR0352333} for a thorough discussion of these issues.

If $\cat{C}$ is a bit more exotic, e.\,g., analytic $\Q_p$-manifolds, then there
is no obvious way to give $C^j(N,A)$ the structure of an analytic
$\Q_p$-manifold compatible with the additional structure — and especially none
that also correctly topologises the cohomology groups.

If the quotient is discrete, we can identify $C^q(U,A)$ with $\cat{F}C^q(U,A)$
and define \[ C^p(G/U, C^q(U,A)) = C^p(G/U,\cat{F} C^q(U,A)),\] but note that
while $C^q(U,A)$ carries the structure of a $G$-module, in general it does
\emph{not} carry the structure of a $G/U$-module (the action is only $\forget
U$-invariant after passing to cohomology).

\begin{definition}
  For $N=U,$ let $f\in I^p C^{p+q}(G,A)$ and define its $p$-restriction
  $r_p(f)\in C^p(G/U, C^q(U,A))(=C^p(G/U,\cat{F} C^q(U,A)))$ via \[
    r_p(f)(\mbof{x_1}{G/U},\dots,\mbof{x_p}{G/U})(\mbof{y_1}{U},\dots,
    \mbof{y_q}{U}) = f(y_1,\dots,y_q, s(x_1),\dots, s(x_p)).\]
  This is well-defined: The
  induced map \[r_p(f)(\mbof{\underline{x}}{(G/U)^p})\colon U^q\sra A\] stems from the
  composition \[\begin{tikzcd} U^q \arrow{rrr}{(\id, s(x_1),\dots, s(x_p))} & &
      & U^q\times G^p \arrow{r} & G^q\times G^p \arrow[sra]{r}{f} &
      A,\end{tikzcd}\] where we use the existence of constant maps
  (cf.~\cref{prop:top-cat-has-const-morph}). As $G/U$ is discrete,
  $\underline{x}\mapsto r_p(f)(\underline{x})$ is indeed in $C^p(G/U,
  C^q(U,A)).$
\end{definition}

\begin{lemma}
  \label{prop:rp-trivial-if-too-deep}
  If $p'<p$ and $f\in I^p C^{p+q}$ then $r_{p'}(f)=0.$
  \begin{proof}
   \begin{align*} 
      r_{p'}(f)(\mbof{\underline{x}}{(G/U)^{p'}})(\mbof{y_1}{U},\dots,\mbof{y_{p+q-p'}}{U})
     & = f(y_1,\dots, y_q, y_{q+1},\dots, y_{p+q-p'},s(\underline{x}))\\
     &=
       f(y_1,\dots, y_q, 1,\dots, 1,s(\underline{x})) \\
     &= 0,
     \end{align*} as $f$ is normalised by assumption.
  \end{proof}
\end{lemma}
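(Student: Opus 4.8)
The plan is to reduce the statement to the two defining properties of $I^pC^{p+q}$: that its elements are normalised, and that --- by \cref{prop:n-invariance-enough-for-i-filt} --- lying in $I^pC^{p+q}$ is the same as having the last $p$ arguments be $\forget N=\forget U$-invariant. First I would note that $r_{p'}$ even makes sense on $f$: since the filtration is decreasing, $f\in I^pC^{p+q}\subseteq I^{p'}C^{p'+(q+p-p')}$, and the well-definedness of $r_{p'}(f)\in C^{p'}(G/U,C^{q+p-p'}(U,A))$ follows verbatim as for $r_p$, composing $f$ with constant maps via \cref{prop:top-cat-has-const-morph}. Unwinding the definition, for $\underline{x}\in(G/U)^{p'}$ and $y_1,\dots,y_{p+q-p'}\in U$ one has
\[ r_{p'}(f)(\underline{x})(y_1,\dots,y_{p+q-p'}) = f(y_1,\dots,y_{p+q-p'},s(x_1),\dots,s(x_{p'})). \]

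Next I would do the index bookkeeping. The arguments fed to $f$ on the right-hand side are $p+q-p'$ elements of $U$ followed by the $p'$ representatives $s(x_i)$; as $p'<p$ we have $p+q-p'\ge q+1$, so the last $p$ slots of $f$ are exactly $y_{q+1},\dots,y_{p+q-p'},s(x_1),\dots,s(x_{p'})$, of which $p-p'\ge 1$ are occupied by the $y$'s. By \cref{prop:n-invariance-enough-for-i-filt} these last $p$ slots are $\forget N$-invariant, and $y_{q+1},\dots,y_{p+q-p'}$ all lie in $\forget N$; applying the invariance with the identity in the relevant slots and those $y$'s as the $\sigma$'s, I may replace each of $y_{q+1},\dots,y_{p+q-p'}$ by $1$ without changing the value of $f$. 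Since $f$ is normalised and at least one of its arguments has now been forced to equal $1$, the value is $0$, i.e.\ $r_{p'}(f)=0$.

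I do not expect a genuine obstacle: the lemma is essentially a counting consequence of the characterisation of the filtration in \cref{prop:n-invariance-enough-for-i-filt}. The one point to be careful about is precisely that count --- that the $y$-slots which get killed all fall inside the invariant block of the last $p$ arguments, i.e.\ that $(p+q-p')-q=p-p'$ is both strictly positive and at most $p$ --- which is immediate from $0<p-p'\le p$.
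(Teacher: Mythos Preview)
Your proof is correct and follows the same approach as the paper: unwind $r_{p'}(f)$, use the $\forget N$-invariance of the last $p$ slots (via \cref{prop:n-invariance-enough-for-i-filt}) to replace $y_{q+1},\dots,y_{p+q-p'}$ by $1$, and conclude by normalisation. You are simply more explicit about the index bookkeeping and the well-definedness of $r_{p'}$ than the paper, which presents the computation in three lines.
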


\subsection{Extensions of cochains}\label{sec:extension-of-cochains}
Comparing $C^{p+q}(G,A)$ with $C^p(G/U, C^q(U,A))$, we will have to extend maps
$U^q \times (G/U)^p \ra A$ to maps $G^q\times (G/U)^p\ra A$. This extension
process also works if we work with $N$ instead of $U$.

\begin{definition}\label{def:cochain-ext}
  Let $g\colon G^{k-1} \times N^{q-k} \times G^p \sra A$ be a normalised map
  (meaning its value being zero if one of the arguments is $1$) with $k\geq 2$.
  Then for normalised $f\colon G^k \times N^{q-k}\times G^p\sra A$ we define
  \begin{gather*}
    \ext_f(g) \colon G^{k-2}\times G \times G \times
    N^{q-k-1}\times
    G^p\sra A\\
    (\mbof{\underline{y}}{G^{k-2}}, \mbof{w}{G}, \mbof{x}{G}, \mbof{\underline{\sigma}}{
      N^{q-k-1}}, \mbof{\underline{z}}{G^p})\mapsto g(\underline{y}, wx^*, x_N,
    \underline{\sigma}, \underline{z}) + (-1)^k f(\underline{y},
    w,x^*,x_N,\underline{\sigma},\underline{z})
  \end{gather*}
  It is called the extension of $g$ along $f$ and is again normalised. Note that
  it actually lies in $h_A$, as the modification of the arguments is done via
  morphisms in $\cat{C}$.
\end{definition}

Calling it an extension is due to the following fact which is immediately
verified:

\begin{lemma}
  \label{prop:cochain-ext-is-ext}
  In the setting of \cref{def:cochain-ext} the following
  diagram commutes:
  \[ \begin{tikzcd}
      G^{k-1} \times N \times N^{q-k-1} \times G^p \arrow[hook]{dd}\arrow[sra]{dr}{g} & \\
      & A \\
      G^{k-1} \times G \times N^{q-k-1} \times G^p \arrow[sra]{ru}{\ext_f(g)} & \\
    \end{tikzcd}  \]
  \begin{proof}
    Vectors in the image of the inclusion have $x^*=1$ and $x_N=x.$
  \end{proof}
\end{lemma}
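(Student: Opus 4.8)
The plan is to verify commutativity of the triangle on the level of underlying sets, which is enough: writing $\iota$ for the displayed inclusion, both $g$ and $\ext_f(g)\circ\iota$ are maps out of $G^{k-1}\times N\times N^{q-k-1}\times G^p$ lying in $h_A$ of this object, and since $h_A(X)$ is a subset of $\Hom_{\cat{Set}}(\forget X,A)$ for every $X$, two such maps coincide as soon as they agree after applying $\forget$. So I would fix an arbitrary tuple $(\mbof{\underline{y}}{G^{k-2}},\mbof{w}{G},\mbof{\tau}{N},\mbof{\underline{\sigma}}{N^{q-k-1}},\mbof{\underline{z}}{G^p})$ in the source and compare the two composites on it; the crucial feature is that the slot occupied by $\tau$ is filled by an element of $N$, not merely of $G$.

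The one computation needed is the effect of the morphisms $(-)^*$ and $(-)_N$ of the setup on $\tau\in N$. Since $N$ is the kernel of $\pi\colon G\ra G/N$ we have $\pi(\tau)=1$, and because the section $s\colon G/N\ra G$ has the neutral element in its image — applying $\pi$ to a preimage of $1$ under $s$ forces $s(1)=1$ — we get $\tau^*=s(\pi(\tau))=1$ and hence $\tau_N=(\tau^*)^{-1}\tau=\tau$. Plugging $x=\tau$ into \cref{def:cochain-ext} and using $w\tau^*=w$ and $\tau_N=\tau$ therefore gives
\[
  \ext_f(g)(\underline{y},w,\tau,\underline{\sigma},\underline{z}) = g(\underline{y},w,\tau,\underline{\sigma},\underline{z}) + (-1)^{k} f(\underline{y},w,1,\tau,\underline{\sigma},\underline{z}).
\]
The second term vanishes because $f$ is normalised and its $k$-th argument equals $1$, while the first term is exactly $g$ evaluated on the original tuple, i.e.\ $(g\circ\iota)$ applied to it. This yields $\ext_f(g)\circ\iota=g$, which is the claim.

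I do not anticipate any genuine obstacle; the only points that require care are the reduction to a set-theoretic identity (so that one may legitimately evaluate on tuples rather than argue with morphisms in $\cat{C}$) and keeping the indexing straight — in particular checking that the argument set to $1$ genuinely sits among the first $k$ ($G$-valued) arguments of $f$, so that normalisation of $f$ applies, and that the surviving term lands in the intended slots of $g$. The degenerate cases $q-k-1=0$ and $p=0$ behave identically.
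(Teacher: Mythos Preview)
Your proof is correct and follows exactly the paper's approach: the key observation is that for $x\in N$ one has $x^*=1$ and $x_N=x$, which makes the $g$-term reduce to $g$ on the original tuple and kills the $f$-term by normalisation. You have simply spelled out in full the details the paper compresses into a single sentence, including the justification that $s(1)=1$ and the reduction to a set-level check.
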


\begin{lemma}
     \label{diag:cochain-extensions-are-extensions-after-partial}
   In the setting of \cref{def:cochain-ext}, the usual coboundary formula gives
   meaning to the function \[
     \partial(\ext_f(g))\colon G^k \times N^{q-k} \times G^p \sra A\] and the
   following diagram commutes:
     \[\begin{tikzcd}
       G^{k-1} \times N \times N^{q-k} \times G^p \arrow[hook]{dd}\arrow[sra]{dr}{\partial(g)} & \\
       & A \\
       G^{k-1} \times G \times N^{q-k} \times G^p \arrow[sra]{ru}{\partial(\ext_f(g))} & \\
     \end{tikzcd}\]
   \begin{proof}
     Immediate from \cref{prop:cochain-ext-is-ext}.
   \end{proof}
\end{lemma}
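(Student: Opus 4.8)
The plan is to reduce both assertions — that the coboundary formula yields a well-defined element $\partial(\ext_f(g))\in h_A(G^k\times N^{q-k}\times G^p)$, and that the triangle commutes — to two facts already available: the well-definedness argument of \cref{prop:coboundary-operator-well-defined}, and the identity $\ext_f(g)|_{x\in N}=g$ recorded in \cref{prop:cochain-ext-is-ext}. The one structural observation that makes everything go through is that the block of arguments constrained to lie in $N$ is stable under the index shifts and multiplications occurring in the coboundary expansion: since $N$ is a subgroup, a product of two elements of $N$ stays in $N$, whereas a product with at least one factor outside $N$ lands in $G$, which is harmless because the corresponding slot of $\ext_f(g)$ is a $G$-slot there anyway.

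First I would expand $\partial(\ext_f(g))$ by the usual formula on a tuple $(z_1,\dots,z_{q+p})$ with $z_1,\dots,z_k\in G$, $z_{k+1},\dots,z_q\in N$, $z_{q+1},\dots,z_{q+p}\in G$, and check summand by summand that each term makes sense in $h_A$: every merge summand and the final summand is $\ext_f(g)$ precomposed with a morphism of $\cat{C}$ assembled from the multiplication of $G$ and the projections — one only has to note, using the observation above, that the $N$-slots of $\ext_f(g)$ still receive $N$-valued entries once $z_{k+1},\dots,z_q$ have been forced into $N$ — while the leading summand $z_1.\ext_f(g)(z_2,\dots,z_{q+p})$ lies in $h_A$ by the module-action axiom of \cref{def:module-w-rigidification}, restricted along $N^{q-k}\hookrightarrow G\times N^{q-k-1}$. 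This exhibits the element $\partial(\ext_f(g))$.

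For commutativity, restrict further so that $z_k\in N$ as well, i.e.\ evaluate on $G^{k-1}\times N\times N^{q-k}\times G^p$. The point is that in \emph{every} summand of $\partial(\ext_f(g))(z_1,\dots,z_{q+p})$ the ``$x$''-slot of $\ext_f(g)$ (position $k$ of its domain) is then filled by an element of $N$: either directly by $z_k$, or by the merged product $z_kz_{k+1}$ with both factors in $N$, or — once an earlier merge has shifted the indices — by $z_{k+1}$; so, together with the $N$-slots handled in the previous paragraph, the whole argument tuple of $\ext_f(g)$ lies in $G^{k-1}\times N^{q-k}\times G^p$. Hence \cref{prop:cochain-ext-is-ext} replaces $\ext_f(g)$ by $g$ in each summand; since the coboundary formula builds these argument tuples from the $z_i$ alone, and $\ext_f(g)$ and $g$ agree there, the coboundary expansion of $\partial(\ext_f(g))$ becomes, term for term, that of $\partial(g)$ — which is exactly the asserted identity of maps $G^{k-1}\times N\times N^{q-k}\times G^p\sra A$. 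The only mildly delicate point in the whole argument is the index bookkeeping in the second paragraph, confirming that the merges of the coboundary formula carry $G^k\times N^{q-k}\times G^p$ into the various domains on which $\ext_f(g)$ is defined; with the conventions in place this is purely formal, which is why the author rightly calls the lemma ``immediate''.
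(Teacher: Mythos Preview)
Your proof is correct and follows exactly the approach the paper intends: the one-line proof ``Immediate from \cref{prop:cochain-ext-is-ext}'' is shorthand for precisely the term-by-term bookkeeping you spell out. One tiny terminological slip: you write that ``$N$ is a subgroup'', but in the paper's setup $N$ may contain monoid factors; the argument only needs that $N$ is closed under multiplication, which it is.
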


\begin{remark}
  In the setting of \cref{def:cochain-ext}, $g$ is only defined on $G^{k-1}
  \times N \times N^{q-k-1} \times G^p$. For the coboundary formula to make
  sense, all terms $(x_1,\dots,x_i x_{i+1}, \dots, x_{p+q})$ must lie in $G^{k-1}
  \times N \times N^{q-k-1} \times G^p$. This is the reason $\partial g$ is only
  defined on $G^{k-1} \times N \times N^{q-k} \times G^p$.
\end{remark}

\begin{proposition}\label{prop:cochain-ext-main-work}
  Assume $N=U.$ Let $q\geq 2$ and $f\in I^pC^{p+q}$ with $\partial f\in I^{p+1}C^{p+q+1}$.
  Take $u\in C^p(G/U, C^{q-1}(U,A))$. Define an element $g=g(u,f)$ as follows:
  \begin{itemize}
  \item $g_0(\mbof{\underline{\sigma}}{U^{q-1}}, \mbof{\underline{y}}{G^p}) =
       u(\underline{y})(\underline{\sigma}),$
     \item $g_1(\mbof{x}{G}, \mbof{\underline{\sigma}}{U^{q-2}}, \mbof{\underline{y}}{G^p}) = x^*.g_0(x_U,
       \underline{\sigma}, \underline{y}) - f(x^*,x_U,\underline{\sigma},
       \underline{y})$,
       \item $g_k = \ext_f(g_{k-1})\in h_A(G^k\times U^{q-1-k}\times G^p)$ for $2\leq k \leq q-1.$
       \item $g=g_{q-1}.$
  \end{itemize}
  Then the following hold:
  \begin{enumerate}
  \item $g\in I^pC^{p+q-1},$
  \item $r_p(g)=u,$
  \item if $r_p(f)(\mbof{\underline{x}}{(G/U)^p})=\partial(u(\underline{x}))$
    for all $\underline{x}\in (G/U)^p,$ then $f-\partial g\in I^{p+1}C^{p+q},$
  \end{enumerate}
  \begin{proof}  
    The first assertion follows immediately from the definitions, as none of the
    manipulations touches the last $p$ arguments.

    The second assertion follows inductively, the details being carried out in
    \cite[proof of theorem 1]{MR0052438} in slightly different phrasing and in
    \cite[proposition 3.6.9]{thomas:on-analytic-and-iwasawa-cohomology} in this
    exact phrasing.
   \end{proof}
\end{proposition}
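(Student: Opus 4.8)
The plan is to treat the three assertions separately; only (3) needs real work. For (1) I would note that neither the closed formulas for $g_{0},g_{1}$ nor the operator $\ext_{f}$ of \ref{def:cochain-ext} ever modifies the last $p$ arguments, so normalisation and the property of factoring through the discrete quotient $G/U$ in those slots pass from $u$ to $g=g_{q-1}$. For (2) I would restrict the first $q-1$ arguments to lie in $\forget U$: then \ref{prop:cochain-ext-is-ext}, applied $q-2$ times, collapses $g_{q-1}$ to $g_{1}$, and on $\forget U$ one has $x^{*}=1$ and $x_{U}=x$, so $g_{1}$ reduces to $g_{0}$ because $f$ is normalised; hence $r_{p}(g)(\underline x)(\underline\sigma)=g_{0}(\underline\sigma,s(\underline x))=u(\underline x)(\underline\sigma)$, i.e.\ $r_{p}(g)=u$. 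The inductive details are the routine ones written out in \cite[proof of theorem 1]{MR0052438} and \cite[proposition 3.6.9]{thomas:on-analytic-and-iwasawa-cohomology}.

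For (3) I would set $\phi=f-\partial g$; it is normalised and lies in $I^{p}C^{p+q}$ by (1), so by \ref{prop:n-invariance-enough-for-i-filt} it suffices to show that the $q$-th argument of $\phi$ is $\forget U$-invariant (the last $p$ already are). As the construction is inert on the last $p$ slots — which factor through $G/U$ and are pinned down by $s$ — I would suppress them from the notation, i.e.\ fix $\underline x\in(G/U)^{p}$, substitute $s(\underline x)$, and reduce to proving that, under the hypothesis $r_{p}(f)(\underline x)=\partial(u(\underline x))$, the function $\phi$ with $s(\underline x)$ plugged in is $\forget U$-invariant in its $q$-th variable. The engine is the extension recursion $g_{k}=\ext_{f}(g_{k-1})$ together with \ref{prop:cochain-ext-is-ext} and \ref{diag:cochain-extensions-are-extensions-after-partial}: on the locus where the $k$-th variable lies in $\forget U$, both $g_{k}$ and $\partial g_{k}$ agree with $g_{k-1}$ and $\partial g_{k-1}$, and off that locus the discrepancy is precisely the explicit $(-1)^{k}f(\dots,w,x^{*},x_{N},\dots)$-term of \ref{def:cochain-ext}. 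I would substitute this telescope into the coboundary expansion of $\partial g_{q-1}$ and carry out the Hochschild--Serre cancellation: every summand pairs up and vanishes except for one innermost layer, in which the hypothesis $r_{p}(f)(\underline x)=\partial(u(\underline x))$ forces the dependence on the $q$-th variable to be through $x^{*}$ — hence through $\forget\pi$ — only. That is exactly the $\forget U$-invariance wanted, and reassembling over $\underline x$ yields (3). Throughout, one has to check that the intermediate functions genuinely lie in the relevant $h_{A}(-)$; this is automatic, since every manipulation (inserting neutral elements, multiplying consecutive variables, applying $s$, the representative morphisms $(-)^{*}$ and $(-)_{N}$, the splitting of $\id_{G}$ recalled in the Setup, and the $G$-action) is induced by a morphism of $\cat{C}$, cf.\ \ref{prop:top-cat-has-const-morph} and \ref{def:cochain-ext}.

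The step I expect to be the main obstacle is the term matching inside that last reduction: $\partial g_{q-1}$ produces on the order of $q^{2}$ summands, and one must check that they cancel in pairs with the right signs — by exactly the reindexing already used above for Shapiro's lemma (the $F(i,j),G(i,j)$ trick). A secondary difficulty, proper to the monoid setting, is that wherever the classical Hochschild--Serre argument inverts a group element one must instead route through $(-)^{*}$ and $(-)_{N}$ to stay inside $\cat{C}$. Since this bookkeeping is long and, by the author's own account, phrased verbatim in \cite[proposition 3.6.9]{thomas:on-analytic-and-iwasawa-cohomology}, I would present only the outline above and refer there for the details.
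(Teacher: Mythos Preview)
Your proposal is correct and follows essentially the same approach as the paper. Both you and the paper handle (1) by noting that no manipulation touches the last $p$ arguments, and both defer the inductive details of (2) to \cite[proof of theorem 1]{MR0052438} and \cite[proposition 3.6.9]{thomas:on-analytic-and-iwasawa-cohomology}. For (3), the paper's proof block actually says nothing at all --- it closes after addressing only (1) and (2), leaving (3) implicitly to the same references --- so your outlined reduction via \ref{prop:n-invariance-enough-for-i-filt} and the extension recursion is already more explicit than what the paper provides.
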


\begin{remark}
  The cochain $g=g(u,f)$ of \ref{prop:cochain-ext-main-work} has a rather
  unwieldy definition. However, if $f=0$ and $U$ is a direct factor of $G$, then
  $g$ has an explicit description,
  cf.~\ref{prop:explicit-determination-of-ext-by-zero-for-products}.
\end{remark}
\subsection{Comparison of the first page}

\begin{proposition}\label{prop:iso-on-e1-for-closed-subgrp}
  \begin{gather*}
    ss(I^\bullet C^\bullet)^{p,0}_1 \cong C^p(G/N,A^N) 
  \end{gather*}
  for all $p$.
  \begin{proof}
    By definition, $ss(I^\bullet C^\bullet)^{p,0}_1 = \ker I^p C^p\oset{\partial}{\ra} I^p
    C^{p+1}/I^{p+1}C^{p+1}$ and furthermore, $f\in I^pC^p$ comes from a
    (necessarily unique) morphism $(G/N)^p\sra A,$ which yields an element in
    $C^p(G/N, A).$ We first need to show that the image of $f$ is contained in
    $A^N$, i.\,e., \[(\mbof{n}{
        N}-1)f(\mbof{\underline{x}}{G^p})=0.\]
    As $f$ is in the aforementioned kernel, $\partial f(n,\underline{x}) =
    \partial f(1,\underline{x}) = 0$, and
    as $f\in I^pC^p$ by assumption, the difference between the coboundary
    expansions of $\partial f(n,\underline{x})$ and $\partial f(1,\underline{x})$
    is exactly $(n-1)f(\underline{x})$ and hence also zero. Injectivity of the
    map is then clear.

    On the other hand, for $f\in C^p(G/N,A^N)$ coming from
    $\widetilde{f}\in h_{A^N}((G/N)^p),$ consider the induced morphism
    \[
      \begin{tikzcd}
        \widetilde{g}\colon G^p\arrow{r} & (G/N)^p\arrow[sra]{r}{\widetilde{f}} & A^N\arrow{r} &
        A.
      \end{tikzcd}\]
    It is clear that $g\in I^pC^p$ and gets mapped to $f$. As the image of $g$
    lies in $A^N$, $\partial g$ is $\forget N$-invariant for all of its $p+1$
    arguments, so \ref{prop:n-invariance-enough-for-i-filt} implies that
    indeed $g\in ss(I^\bullet C^\bullet)^{p,0}_1$.
  \end{proof}
\end{proposition}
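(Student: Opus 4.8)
The plan is to collapse the $E_1$-term to an honest kernel and then exhibit an explicit bijection with $C^p(G/N,A^N)$. Since $I^jC^n=0$ whenever $j>n$, in the formula defining $ss(I^\bullet C^\bullet)^{p,0}_1$ we have $I^{p+1}C^p=0$ and $I^pC^{p-1}=I^{p+1}C^{p-1}=0$, so the subquotient degenerates to \[ss(I^\bullet C^\bullet)^{p,0}_1=\ker\!\left(I^pC^p\oset{\partial}{\ra}I^pC^{p+1}/I^{p+1}C^{p+1}\right).\] Moreover, by \ref{prop:n-invariance-enough-for-i-filt} (applied with $j=n=p$) together with \ref{prop:section-implies-quotient-is-set-quotient}, an element of $I^pC^p$ is exactly a normalised cochain factoring through $G^p\to(G/N)^p$, and the section $s$ makes this factorisation canonical; identifying the kernel therefore amounts to determining which such cochains satisfy $\partial f\in I^{p+1}C^{p+1}$.

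The crux is the claim that, for $f\in I^pC^p$, one has $\partial f\in I^{p+1}C^{p+1}$ if and only if $f$ takes values in $A^{\forget N}$. Since $\partial f$ is automatically normalised, \ref{prop:n-invariance-enough-for-i-filt} rephrases the left-hand side as: all $p+1$ arguments of $\partial f$ are $\forget N$-invariant, in particular the first. For the forward direction I would compare the coboundary expansions of $\partial f(n,\underline x)$ and $\partial f(1,\underline x)$ for $n\in\forget N$ and $\underline x\in G^p$: using that $f$ is $\forget N$-invariant in each of its slots, and that $N$ is normal in $G$ so that the $i=1$ face term $f(nx_1,x_2,\dots)$ may be rewritten as $f(x_1,x_2,\dots)$, every term of $\partial f(n,\underline x)$ matches the corresponding term of $\partial f(1,\underline x)$ except the leading one, and the difference of those two is $(n-1).f(\underline x)$; since first-slot invariance of $\partial f$ together with normalisation give $\partial f(n,\underline x)=\partial f(1,\underline x)=0$, we obtain $(n-1).f(\underline x)=0$ for all $n$ and $\underline x$, i.\,e., $f$ takes values in $A^{\forget N}$. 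Conversely, if $f$ takes values in $A^{\forget N}$, then every summand of $\partial f$ --- the leading one included, since now the outer $x_1$-action is trivial on the relevant values --- is $\forget N$-invariant in all of its arguments, so $\partial f\in I^{p+1}C^{p+1}$ by \ref{prop:n-invariance-enough-for-i-filt}.

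It then remains to package this into mutually inverse maps. To $f\in ss(I^\bullet C^\bullet)^{p,0}_1$ I associate the induced normalised map $\widetilde f\colon(G/N)^p\sra A$, which by the above takes values in $A^{\forget N}$, so that \ref{def:invariants-of-rigidified-module} exhibits it as an element of $h_{A^N}((G/N)^p)$, hence of $C^p(G/N,A^N)$. Conversely, to $\widetilde f\in C^p(G/N,A^N)$ I associate its pullback along $G^p\to(G/N)^p$ followed by $A^N\hookrightarrow A$; this is normalised with all arguments $\forget N$-invariant, hence lies in $I^pC^p$ by \ref{prop:n-invariance-enough-for-i-filt}, and lies in the kernel by the converse above. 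The two assignments are inverse because $\forget (G/N)\cong\forget G/\forget N$ by \ref{prop:section-implies-quotient-is-set-quotient}, so precomposition with the projection recovers $f$ from $\widetilde f$.

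I expect the middle paragraph to be the main obstacle: beyond the mechanical cataloguing of which terms of $\partial f(n,\underline x)$ survive, the genuinely delicate point is justifying the rewrite $f(nx_1,\dots)=f(x_1,\dots)$ on the monoid factors of $G$, where ``normality of $N$'' must be read with care, since the monoid parts of $N$ are either trivial or the whole factor rather than conjugation-stable subgroups in the usual sense; and one must check that the correspondence between $I^pC^p$ and cochains on $G/N$ is a genuine bijection and not merely a surjection, which is exactly where the section $s$ and \ref{prop:section-implies-quotient-is-set-quotient} are indispensable.
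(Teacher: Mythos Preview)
Your proposal is correct and follows essentially the same route as the paper's proof: reduce the $E_1$-term to $\ker(I^pC^p\to I^pC^{p+1}/I^{p+1}C^{p+1})$, identify $I^pC^p$ with normalised cochains on $(G/N)^p$, and compare $\partial f(n,\underline{x})$ with $\partial f(1,\underline{x})$ to extract $(n-1)f(\underline{x})=0$; for the converse, pull back along $G^p\to(G/N)^p$ and invoke \ref{prop:n-invariance-enough-for-i-filt}.

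One simplification you may want: the rewrite $f(nx_1,\dots)=f(x_1,\dots)$ that worries you on the monoid factors is immediate once you remember that $f\in I^pC^p$ literally \emph{means} $f$ factors through $(G/N)^p$, and $\pi(nx_1)=\pi(n)\pi(x_1)=\pi(x_1)$; no normality or commutativity gymnastics are needed. The same observation kills the remaining terms $f(n,\dots)$ directly, since $\pi(n)=1$ and the factored map is normalised. This is how the paper implicitly argues, and it sidesteps your concern about the monoid parts entirely.
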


\begin{remark}
  By abuse of notation, even for not necessarily open $N$ we will refer to the
  map $ss(I^\bullet C^\bullet)^{p,0}_1\ra C^p(G/N,A^N)$ of
  \ref{prop:iso-on-e1-for-closed-subgrp} as a map $r_p\colon ss(I^\bullet
  C^\bullet)^{p,0}_1\ra C^p(G/N,H^0(N,A)).$ This is clearly compatible with the
  previous definition of $r_p.$
\end{remark}

\begin{proposition}\label{prop:hochschild-serre-iso-on-e1}
  Suppose that $N=U.$ Then $r_p\colon I^pC^{p+q} \ra C^p(G/U, C^{q}(U,A))$
  induces an isomorphism between the $E_1$-terms: \[ ss(I^\bullet
    C^\bullet)_1^{p,q} \cong C^p(G/U, H^q(U,A)).\]
  \begin{proof}
    Recall that by definition,
    \begin{align*}
      ss(I^\bullet C^\bullet)_1^{p,q}=&\frac{\ker\left(  
                                        I^pC^{p+q}/I^{p+1}C^{p+q}\oset{\partial}{\ra} I^pC^{p+q+1}/I^{p+1}C^{p+q+1}\right)}{\image\left(
                                        I^pC^{p+q-1}/I^{p+1}C^{p+q-1} \oset{\partial}{\ra} I^pC^{p+q}/I^{p+1}C^{p+q}\right)}\\
      =& \frac{\ker \left(I^pC^{p+q} \oset{\partial}{\ra}
         C^{p+q+1}/I^{p+1}C^{p+q+1}\right)}{\partial(I^pC^{p+q-1}) +
         I^{p+1}C^{p+q}}.
    \end{align*}
     We will first prove injectivity. Therefore, take $f\in I^p C^{p+q}$ with
     $\partial f\in I^{p+1}C^{p+q+1}.$ Assume that $f$ is zero in $C^p(G/U,
     H^q(U,A)),$ i.\,e., \[r_p(f)(\mbof{\underline{x}}{(G/U)^p})=\partial(u(\underline{x}))\] for
     some $u\in C^p(G/U, C^{q-1}(U,A))$. We
     want to find an $h\in I^pC^{p+q-1}$ with $f-\partial(h)\in I^{p+1}C^{p+q}.$
   
     The case of $q=0$ was already dealt with in
     \ref{prop:iso-on-e1-for-closed-subgrp}.
     
     If $q=1$, then define $h\in I^pC^{p+1-1}$ as the normalised cocycle
     corresponding to $u\in C^p(G/U, C^0(U,A))=C^p(G/U, A)$. Note that by
     assumption $u$ has the property \[ f(\mbof{x}{G},\mbof{\underline{y}}{G^p})
       = r_p(f)(\underline{y})(x) = \partial(u(\underline{y}))(x)=
       x.u(\underline{y}) - u(\underline{y}).\] We want to show that
     $f-\partial(h)\in I^{p+1}C^{p+1}$. For that matter we need to show that
     \[(f-\partial(h))(\mbof{x}{G}\cdot\mbof{\sigma}{U},\mbof{\underline{y}}{G^p})\]
     is independent of $\sigma.$ As $\partial f\in I^{p+1}C^{p+2}$, we see that
     $\partial(f)(\mbof{x}{G},\mbof{\sigma}{U},\mbof{\underline{y}}{G^p})=0.$
     Expansion of the coboundary operator hence yields \[
       f(x\sigma,\underline{y}) = x.f(\sigma,\underline{y}) +
       f(x,\underline{y}),\] as $f\in I^pC^{p+q}$, and also \[\partial(h)(x\sigma,\underline{y}) =
       x.\sigma.h(\underline{y}) + \text{ terms independent of $\sigma$},\] as
     $h\in I^pC^p.$ We
     hence get
     \begin{align*}
       (f-\partial(h))(x\sigma,\underline{y}) &= f(x\sigma,\underline{y}) - x.\sigma.h(\underline{y}) + \text{ terms independent of $\sigma$}\\
                                              &= x.f(\sigma,\underline{y}) + f(x,\underline{y}) - x.\sigma.h(\underline{y})+ \text{ terms independent of $\sigma$}\\
                                              &= x.(\sigma.u(\underline{y}) - u(\underline{y})) - x.\sigma.u(\underline{y}) + \text{ terms independent of $\sigma$},
     \end{align*}
     which is independent of $\sigma$.
     
     For $q>1$ we are in the situation of \ref{prop:cochain-ext-main-work},
     which deals with exactly this case.

     For surjectivity, take $u\in C^p(G/U,C^q(U,A))$ such that
     $\partial(u(\mbof{\underline{x}}{(G/U)^p}))=0$ for all $\underline{x}$. For
     $q=0$ finding a preimage is trivial, for $q\geq 1$
     \ref{prop:cochain-ext-main-work} yields a preimage $g\in I^pC^{p+q}$ with
     $\partial g\in I^{p+1}C^{p+q+1}$ (take $f=0$ in the proposition).
  \end{proof}
\end{proposition}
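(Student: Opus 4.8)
The plan is to show that the restriction map $r_p\colon I^pC^{p+q}\ra C^p(G/U,C^q(U,A))$ descends to the quotient
\[
  ss(I^\bullet C^\bullet)_1^{p,q}=\frac{\ker\bigl(I^pC^{p+q}\oset{\partial}{\ra} C^{p+q+1}/I^{p+1}C^{p+q+1}\bigr)}{\partial(I^pC^{p+q-1})+I^{p+1}C^{p+q}}
\]
and that the induced map to $C^p(G/U,H^q(U,A))$ is bijective. That $r_p$ is well defined at the level of the $E_1$-page is a preliminary bookkeeping step: if $\partial f\in I^{p+1}C^{p+q+1}$, then using that the $(q{+}1)$-st argument of $\partial f$ is $\forget N$-invariant together with \ref{diag:cochain-extensions-are-extensions-after-partial}, each $r_p(f)(\underline{x})$ is a $q$-cocycle for $U$; a cochain in $I^{p+1}C^{p+q}$ is $\forget N$-invariant in its $q$-th argument and hence restricts to $0$ by normalisation; and $r_p$ carries $\partial(I^pC^{p+q-1})$ into coboundaries, so it annihilates the denominator. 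The substance is then injectivity and surjectivity.

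\emph{Injectivity.} Let $f\in I^pC^{p+q}$ with $\partial f\in I^{p+1}C^{p+q+1}$, and suppose its class dies, i.e.\ $r_p(f)(\underline{x})=\partial\bigl(u(\underline{x})\bigr)$ for some $u\in C^p(G/U,C^{q-1}(U,A))$; we must produce $h\in I^pC^{p+q-1}$ with $f-\partial h\in I^{p+1}C^{p+q}$. For $q=0$ this is exactly \ref{prop:iso-on-e1-for-closed-subgrp}. For $q=1$, take $h\in I^pC^p$ the cochain corresponding to $u\in C^p(G/U,A)$; from $\partial f\in I^{p+1}C^{p+2}$ one reads off $f(x\sigma,\underline{y})=x.f(\sigma,\underline{y})+f(x,\underline{y})$ for $\sigma\in U$, while $h\in I^pC^p$ forces the only $\sigma$-dependent summand of $\partial h(x\sigma,\underline{y})$ to be $x.\sigma.h(\underline{y})$, and substituting $f(\sigma,\underline{y})=r_p(f)(\underline{y})(\sigma)=\sigma.u(\underline{y})-u(\underline{y})$ makes $(f-\partial h)(x\sigma,\underline{y})$ visibly independent of $\sigma$, so $f-\partial h\in I^{p+1}C^{p+1}$. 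For $q>1$ this is precisely the content of \ref{prop:cochain-ext-main-work}: the cochain $g=g(u,f)$ constructed there lies in $I^pC^{p+q-1}$, satisfies $r_p(g)=u$, and --- because $r_p(f)(\underline{x})=\partial(u(\underline{x}))$ --- satisfies $f-\partial g\in I^{p+1}C^{p+q}$.

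\emph{Surjectivity.} Let $u\in C^p(G/U,C^q(U,A))$ with $\partial(u(\underline{x}))=0$ for all $\underline{x}$. For $q=0$ this says $u$ takes values in $A^U$, and the composite $G^p\ra(G/U)^p\oset{u}{\ra}A^U\inj A$ is, as in \ref{prop:iso-on-e1-for-closed-subgrp}, an element of $I^pC^p$ lying in the kernel and restricting to $u$. For $q\geq1$ apply \ref{prop:cochain-ext-main-work} with its ``$q$'' taken to be our $q+1$ and with $f=0\in I^pC^{p+q+1}$: this yields $g=g(u,0)\in I^pC^{p+q}$ with $r_p(g)=u$, and since $r_p(0)(\underline{x})=0=\partial(u(\underline{x}))$, part~(3) gives $-\partial g\in I^{p+1}C^{p+q+1}$, so $g$ represents a class in $ss(I^\bullet C^\bullet)_1^{p,q}$ mapping to $[u]$.

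\emph{Main obstacle.} Everything genuinely difficult has been isolated into \ref{prop:cochain-ext-main-work} and the extension-of-cochains construction (\ref{def:cochain-ext}): the delicate points are that $r_p(g(u,f))=u$ and, above all, part~(3) --- that passing from $f$ to $f-\partial g$ drops exactly one level of the $I^\bullet$-filtration --- which is the combinatorial heart of the Hochschild--Serre argument and is proved by the inductive $\ext_f$-procedure. Apart from that, the only genuinely self-contained computation is the $q=1$ case above; the remaining risks are index-matching when \ref{prop:cochain-ext-main-work} is reused with a degree shift for surjectivity, and verifying the preliminary well-definedness on the $E_1$-page.
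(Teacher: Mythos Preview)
Your proposal is correct and follows essentially the same approach as the paper: the case split $q=0$, $q=1$, $q>1$ for injectivity (with the explicit coboundary computation for $q=1$ and the appeal to \ref{prop:cochain-ext-main-work} for $q>1$), and the use of \ref{prop:cochain-ext-main-work} with $f=0$ for surjectivity, are exactly what the paper does. You add a careful preliminary check of well-definedness on the $E_1$-page and you make the degree shift explicit when invoking \ref{prop:cochain-ext-main-work} for surjectivity, both of which the paper leaves implicit; the citation of \ref{diag:cochain-extensions-are-extensions-after-partial} in that preliminary step is slightly misplaced (you really only need the formula for $\partial$ together with \ref{prop:n-invariance-enough-for-i-filt}), but this does not affect the argument.
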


\subsection{The shuffling mechanism}

To compare the differential in the spectral sequence attached to $I^\bullet
C^\bullet$, Hochschild and Serre use a process they call \emph{shuffling}.
\begin{lemma}
  \label{prop:existence-of-conjugation}
  Every $x\in\forget G$ induces a conjugation action $G\ra G$ that
  on $M$ is trivial and on $G'$ is the usual conjugation by the $G'$-part of $x$.
  \begin{proof}
    On $G'$ conjugation is defined via the composition \[\begin{tikzcd} G' \cong
        \cat{F}\bullet \times G'\times \cat{F}\bullet \arrow{rr}{(x^{-1},\id,x)}
        && G'\times G' \times G' \arrow{rr}{\textrm{mult}\circ\textrm{mult}} &&
        G',\end{tikzcd}\] where we use the existence of constant maps from
    \ref{prop:top-cat-has-const-morph}.
  \end{proof}
\end{lemma}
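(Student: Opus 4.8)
The plan is to realise the conjugation as a product of two morphisms of $\cat{C}$, one on each factor of $G = G' \times M$. On the monoid part I take $\id_M\colon M \ra M$; conjugation cannot even be formulated on $M$, since its elements need not be invertible, but the lemma only asks for the trivial action there, and since the $M_i$ are abelian nothing is lost anyway. On the group part I must produce, for $x\in\forget G$ with $G'$-component $x'$, a genuine conjugation morphism $\gamma_{x'}\colon G' \ra G'$ in $\cat{C}$; the desired conjugation on $G$ is then $\gamma_{x'}\times\id_M$.

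To build $\gamma_{x'}$, I would use that $\cat{F}\bullet$ is terminal in $\cat{C}$ (as $\cat{F}$ preserves finite limits), so that there is a canonical isomorphism $G' \cong \cat{F}\bullet\times G'\times\cat{F}\bullet$. Via $\forget\cat{F}\simeq\id_{\cat{Set}}$ (\ref{prop:cat-w-discrete-objs-has-right-inverse-to-forgetting}), equivalently via \ref{prop:top-cat-has-const-morph}, the elements $x'$ and $(x')^{-1}$ of $\forget G'$ give morphisms $\cat{F}\bullet\ra G'$. Composing the displayed isomorphism with $((x')^{-1},\id_{G'},x')\colon \cat{F}\bullet\times G'\times\cat{F}\bullet \ra G'\times G'\times G'$ and then with the iterated multiplication $G'\times G'\times G'\ra G'$ of the group object $G'$ produces $\gamma_{x'}$, a morphism of $\cat{C}$ (in fact of topologised groups).

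Finally I would verify the two assertions by pushing everything through $\forget$. Since $\cat{C}$ is concrete, $\forget$ is faithful, and since $\forget$ admits the left adjoint $\cat{F}$ it preserves the finite products in sight; hence it suffices to observe that $\forget(\gamma_{x'}\times\id_M)$ is conjugation by $x'$ on $\forget G'$ and the identity on $\forget M$, which is immediate from the construction, and that $x\mapsto\gamma_{x'}\times\id_M$ respects composition, so that it defines an action $\forget G \ra \Aut_{\cat{C}}(G)$ factoring through the $G'$-part. Faithfulness of $\forget$ upgrades each of these set-level identities to the corresponding identity of morphisms of $\cat{C}$.

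I do not expect a real obstacle here: the entire content is bookkeeping to confirm that every arrow used — the identification with the terminal object, the two constant morphisms selecting $x'$ and $(x')^{-1}$, the multiplication of $G'$, and $\id_M$ — genuinely lives in $\cat{C}$, which is precisely why \ref{prop:top-cat-has-const-morph} is invoked and why the statement is recorded as a lemma rather than taken for granted.
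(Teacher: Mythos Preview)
Your proposal is correct and follows essentially the same approach as the paper: the conjugation on $G'$ is built as the composite $G'\cong\cat{F}\bullet\times G'\times\cat{F}\bullet\ra G'\times G'\times G'\ra G'$ using the constant morphisms for $(x')^{-1}$ and $x'$ supplied by \ref{prop:top-cat-has-const-morph}, and the action on $M$ is taken to be the identity. The paper's proof records only the $G'$ composite and leaves the rest implicit, so your write-up is simply a more explicit version of the same argument.
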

\begin{remark}
  We will write formulas like $x^{-1}yx$ even though $x$ need not be invertible
  in $G$. As $M$ is central in $G$, the usual identities such as $y (y^{-1}xy) =
  xy$ still hold.
\end{remark}

\begin{definition}
  \label{def:morphism-ordered-sets}
  We will make use of the ordered sets \[ \ceil{n} = \left\{ 1,\dots, n \right\}\]
  for $n\in\N.$
  For every injective morphism of ordered sets \[\phi\colon
    \ceil{p}\ra\ceil{p+q}\] there exists a unique (injective)
  morphism \[\phi^*\colon\ceil{q}\ra\ceil{p+q}\] such that \[\ceil{p+q} =
    \image\phi \cup\image \phi^*.\]

  We furthermore define \[\sgn{\phi} = (-1)^{\sum_{i=1}^q \phi^*(i)-i}.\]
\end{definition}

\begin{lemma}
  \label{prop:shuffle-sign-identity}
  Let $\phi\colon\ceil{p}\ra\ceil{p+q}$ be an injective morphism of ordered
  sets. Then $\sgn(\phi)\cdot\sgn(\phi^*)=(-1)^{p\cdot q}$.
  \begin{proof}
    \begin{align*}
      \sum_{i=1}^q \phi^*(i)-i + \sum_{i=1}^p \phi(i)-i - p\cdot q & =
                                                                     \frac{(p+q)(p+q+1) - q(q+1) - p(p+1)}{2} - pq\\
                                                                   &= \frac{2pq}{2}-pq=0.
    \end{align*}
  \end{proof}
\end{lemma}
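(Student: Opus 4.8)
The plan is to unwind the definition of $\sgn$ and reduce to an elementary identity about sums of consecutive integers. First I would record the structural fact underlying the whole setup: since $\phi$ is an injective order-morphism $\ceil p\ra\ceil{p+q}$, its image has exactly $p$ elements, so the complement $\ceil{p+q}\setminus\image\phi$ has exactly $q$ elements and carries a unique order-isomorphic enumeration by $\ceil q$; by the uniqueness clause in \cref{def:morphism-ordered-sets} this enumeration is $\phi^{*}$. Hence $\image\phi$ and $\image\phi^{*}$ are disjoint and partition $\ceil{p+q}$, and running the same argument once more shows $(\phi^{*})^{*}=\phi$.

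It follows that $\sgn(\phi^{*})$ is given by the defining formula with the roles of $p$ and $q$ interchanged, namely $\sgn(\phi^{*})=(-1)^{\sum_{i=1}^{p}\phi(i)-i}$, so that
\[\sgn(\phi)\cdot\sgn(\phi^{*}) = (-1)^{\left(\sum_{i=1}^{q}\phi^{*}(i)-i\right)+\left(\sum_{i=1}^{p}\phi(i)-i\right)}.\]
Thus it suffices to show that the exponent is congruent to $pq$ modulo $2$; in fact I would show it equals $pq$ exactly.

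For the final computation I would use the partition $\ceil{p+q}=\image\phi\sqcup\image\phi^{*}$ to get $\sum_{i=1}^{p}\phi(i)+\sum_{i=1}^{q}\phi^{*}(i)=\sum_{k=1}^{p+q}k=\tfrac{(p+q)(p+q+1)}{2}$, together with $\sum_{i=1}^{p}i=\tfrac{p(p+1)}{2}$ and $\sum_{i=1}^{q}i=\tfrac{q(q+1)}{2}$. Subtracting and simplifying $(p+q)(p+q+1)-p(p+1)-q(q+1)=2pq$ shows that the exponent equals $pq$, which proves the claim. There is essentially no obstacle here; the only point worth stating carefully is that $\phi^{*}$ genuinely enumerates the complement of $\image\phi$ (equivalently, that the two images are disjoint), which is forced by the cardinality identity $\lvert\ceil{p+q}\rvert=\lvert\ceil p\rvert+\lvert\ceil q\rvert$.
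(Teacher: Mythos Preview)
Your proof is correct and follows essentially the same approach as the paper: both compute the exponent $\sum_{i=1}^{q}(\phi^{*}(i)-i)+\sum_{i=1}^{p}(\phi(i)-i)$ using that $\image\phi$ and $\image\phi^{*}$ partition $\ceil{p+q}$, arriving at the identity $\tfrac{(p+q)(p+q+1)-p(p+1)-q(q+1)}{2}=pq$. You simply make explicit the structural facts (disjointness of the images and $(\phi^{*})^{*}=\phi$) that the paper's one-line computation uses implicitly.
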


\begin{definition}
  \label{def:shuffle}
  Denote by $F_{p+q}$ the free $\Z[\forget  G]$-module with basis
  $\forget  G^{p+q}.$
  
  For $\phi\colon\ceil{p}\ra\ceil{p+q}$ an injective morphism
  of ordered sets define \[
    (x_1,\dots,x_q,y_1,\dots,y_p)^\phi =
    (\gamma_1,\dots,\gamma_{p+q})\] with \[\gamma_{\phi(i)} = y_i\]
  and \[\gamma_{\phi^*(i)} =
    (y_1\cdots y_{\phi^*(i)-i})^{-1}x_i(y_1\cdots y_{\phi^*(i)-i}).\]
  If we are considering multiple morphisms $\phi$, we will also write
  $\gamma(\phi,k)$ instead of $\gamma_k.$
  
  Define now \[\shuffle_{p}^{p+q}(\mbof{\underline{z}}{\forget G^{p+q}}) =
    \sum_{\phi} \sgn(\phi)\underline{z}^\phi\in F_{p+q}, \] where here and in
  the following an unspecified sum over $\phi$ denotes the sum over all
  injective morphisms of ordered sets with $p$ and $p+q$ clear from the context.

  Every $g\in C^{p+q}$ gives rise to $g^\phi\in C^{p+q}$ via \[
    g^\phi(\underline{z}) = g(\underline{z}^\phi).\] Indeed $g^\phi\in X^{p+q}$ as
  both conjugation and reordering come from morphisms in $\cat{C}$. We can also define
  $\shuffle_p^{p+q}g\in C^{p+q}$ via \[\shuffle_p^{p+q}(g)(\underline{z}) =
    \sum_\phi \sgn(\phi) g(\underline{z}^\phi).\]
  We will use the convention that $\shuffle_0^n=\id$.
\end{definition}
\begin{proposition}\label{prop:shuffling-is-identity-if-deep-enough}
  Let $\phi\colon\ceil{p}\ra\ceil{p+q}$ be the unique injective morphism of
  ordered sets with $\phi(1)=q+1$. Then $\underline{z}=\underline{z}^{\phi}$ for
  all $z\in G^{p+q}$. If $g\in I^pC^{p+q},$ then
  \[\shuffle_p^{p+q}g=g\text{ on } N^q\times G^p.\]
  \begin{proof}
    The first assertion is clear from the definitions, as then $\phi^*(i)=i$ for
    all $1\leq i \leq q$.

    For the second assertion we will show that for all other $\varphi$,
    $g^\varphi=0$ on $N^q\times G^p.$ In this case, there exists $q+1\leq i\leq
    p+q$ with $i=\varphi^*(k)$ for some $k$ and $\gamma_i$ is then equal to a
    conjugate of $\alpha_k,$ which lies by assumption again in $N$. As $g$ was
    supposed to be normalised and $N$-invariant in the last $p$ components, this
    implies that $g^\varphi=0$ on $N^q\times G^p.$
  \end{proof}
\end{proposition}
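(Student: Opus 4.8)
The plan is to prove the two assertions in turn; the first is a direct unwinding of \ref{def:shuffle}, and the second is a sign-and-vanishing computation built on top of it.

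\textbf{First assertion.} An order-preserving injection $\phi\colon\ceil p\ra\ceil{p+q}$ with $\phi(1)=q+1$ must be $\phi(i)=q+i$, and then $\phi^*\colon\ceil q\ra\ceil{p+q}$ is forced to be $\phi^*(i)=i$ (its image is the complement $\{1,\dots,q\}$ of $\image\phi$). Substituting into \ref{def:shuffle}: the components $\gamma_{\phi(i)}=\gamma_{q+i}$ equal $y_i$, while the components $\gamma_{\phi^*(i)}=\gamma_i$ are conjugates of $x_i$ by the empty product $y_1\cdots y_{\phi^*(i)-i}=1$, hence equal $x_i$. Therefore $\underline z^\phi=(x_1,\dots,x_q,y_1,\dots,y_p)=\underline z$ for every $\underline z\in G^{p+q}$.

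\textbf{Second assertion.} For $\underline z\in N^q\times G^p$ I would expand $\shuffle_p^{p+q}(g)(\underline z)=\sum_\varphi\sgn(\varphi)\,g(\underline z^\varphi)$ and show that the term indexed by the distinguished $\phi$ equals $g(\underline z)$ while every other term vanishes. For the $\phi$-term, \ref{def:morphism-ordered-sets} gives $\sgn(\phi)=(-1)^{\sum_i(\phi^*(i)-i)}=(-1)^0=1$, so by the first assertion it is $g(\underline z^\phi)=g(\underline z)$. Now fix $\varphi\neq\phi$. Since $\image\varphi$ and $\image\varphi^*$ partition $\ceil{p+q}$ and $\abs{\image\varphi^*}=q$, the inclusion $\image\varphi^*\subseteq\{1,\dots,q\}$ would force $\image\varphi=\{q+1,\dots,p+q\}$, hence $\varphi=\phi$; so there is some $k$ with $\varphi^*(k)>q$. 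By \ref{def:shuffle} the component $\gamma_{\varphi^*(k)}$ of $\underline z^\varphi$ is a conjugate of $x_k$, and $x_k\in N$ because $\underline z\in N^q\times G^p$. The conjugation of \ref{prop:existence-of-conjugation} stabilises $N=N'\times\prod_iM_i^{e_i'}$ — on the $G'$-factor it is conjugation inside $G'$, which preserves the normal subgroup $N'$, and on the $M$-factor it is trivial — so $\gamma_{\varphi^*(k)}\in N$. This component sits in position $\varphi^*(k)>q$, i.e.\ among the last $p$ coordinates of $\underline z^\varphi$. But $g\in I^pC^{p+q}=C^{p+q}\cap h_A(G^q\times(G/N)^p)$, so $g(\underline z^\varphi)$ depends on that coordinate only through its image in $\forget(G/N)\cong\forget G/\forget N$ (using the section, via \ref{prop:section-implies-quotient-is-set-quotient}); that image is the class of $1$ since $\gamma_{\varphi^*(k)}\in N$, and normalisation of $g$ then forces $g(\underline z^\varphi)=0$. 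Summing over $\varphi$ gives $\shuffle_p^{p+q}(g)(\underline z)=g(\underline z)$ on $N^q\times G^p$.

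There is no serious obstacle here; the only points needing attention are the combinatorial fact that any $\varphi\neq\phi$ pushes a conjugated entry $\gamma_{\varphi^*(k)}$ into one of the last $p$ slots, together with the two compatibility checks — that conjugation preserves $N$, and that the normalisation of $g$ survives the passage to the $G/N$-description of $I^pC^{p+q}$. Everything else is substitution into \ref{def:shuffle} and \ref{def:morphism-ordered-sets}.
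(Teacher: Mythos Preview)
Your proof is correct and follows essentially the same approach as the paper's: identify the distinguished $\phi$ as the identity shuffle, then for every other $\varphi$ find an index $\varphi^*(k)>q$ placing a conjugate of some $x_k\in N$ into one of the last $p$ slots, and conclude by $N$-invariance plus normalisation. You add details the paper leaves implicit (the sign $\sgn(\phi)=1$, the explicit check that conjugation preserves $N$, and the passage through the $G/N$-description to make normalisation apply), but the argument is the same.
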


\begin{definition}\label{def:partial-coboundary-operators}
  For $p,q\geq 1$ we define the following two partial coboundary operators
  $F_{p+q}\ra F_{p+q-1}$: \begin{align*}
    \partial_q(x_1,\dots,x_q,y_1,\dots,y_p) ={} & x_1.(x_2,\dots,x_q,\underline{y})
    + (-1)^q (x_1,\dots,x_{q-1},\underline{y}) \\ &+ \sum_{i=1}^{q-1} (-1)^i
    (x_1,\dots,x_{i-1},x_ix_{i+1},x_{i+2},\dots,x_q,\underline{y})\end{align*}
  and \begin{align*}\delta_p (x_1,\dots,x_q,y_1,\dots,y_p)={}& y_1 .
    (y_1^{-1}\underline{x}y_1, y_2,\dots,y_p) + (-1)^p (\underline{x},
    y_1,\dots, y_{p-1}) \\& + \sum_{i=1}^{p-1} (-1)^k (\underline{x},
    y_1,\dots,y_{i-1}, y_i y_{i+1}, y_{i+2},\dots, y_p),\end{align*} where
  $\underline{x}=(x_1,\dots,x_q), \underline{y} = (y_1,\dots, y_p)$ and
  $y_1^{-1}\underline{x}y_1 = (y_1^{-1}x_1y_1,\dots,y_1^{-1}x_qy_1)$. These
  formulas also give rise to partial coboundary operators
  $\partial_q,\delta_p\colon C^{p+q-1}\ra C^{p+q}$ by the same arguments as in
  \ref{prop:coboundary-operator-well-defined}.
\end{definition}

\begin{proposition}\label{prop:killing-impure-terms}
  Let $\Phi$ denote the set of injective morphisms of ordered sets
  $\ceil{p}\ra\ceil{p+q}.$ For each $1\leq k\leq p+q-1$ there is a
  bijection \[\left\{ \phi\in\Phi \mid k\in\image\phi^*\text{ and }
      k+1\in\image\phi \right\}\lra \left\{ \psi\in \Phi \mid
      k\in\image\psi\text{ and } k+1\in\image\psi^* \right\}\] with the
  following property: If $\phi$ corresponds to $\psi$ then $\gamma(\phi,
  k)\gamma(\phi, k+1) = \gamma(\psi,k)\gamma(\psi,k+1)$ and $\gamma(\phi,
  i)=\gamma(\psi, i)$ for all $i\neq k,k+1.$ Furthermore, $\sgn(\phi) =
  -\sgn(\psi).$
  \begin{proof}
    Construct $\psi$ as follows: Let $k+1=\phi(a)$. Then
    \begin{gather*}
      \psi(1)=\phi(1),\dots,\psi(a-1)=\phi(a-1), \\
      \psi(a)=k,\\
      \psi(a+1)=\phi(a+1),\dots, \psi(p)=\phi(p)
    \end{gather*}
    and conversely for given $\psi$ with $\psi(b)=k$ construct $\phi$ via
    \begin{gather*}
      \phi(1)=\psi(1),\dots,\phi(b-1)=\psi(b-1), \\
      \phi(b)=k+1,\\
      \phi(b+1)=\psi(b+1),\dots, \phi(p)=\psi(p).
    \end{gather*}
    It is clear that both constructions are mutually inverse to one another and
    satisfy above requirements.
\end{proof}
\end{proposition}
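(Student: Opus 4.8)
The plan is to realise the bijection as the move that, inside the shuffled tuple $\underline{z}^{\phi}=(\gamma_1,\dots,\gamma_{p+q})$, transposes the $y$-letter occupying slot $k+1$ with the conjugated $x$-letter occupying slot $k$. Concretely, given $\phi$ with $k\in\image\phi^*$ and $k+1\in\image\phi$, write $k+1=\phi(a)$; since $\image\phi$ and $\image\phi^*$ are disjoint one has $k\notin\image\phi$, so order-preservation forces $\phi(a-1)<k$ and $\phi(a+1)>k+1$ (the relevant inequality being vacuous when $a=1$, resp.\ $a=p$). Hence the map $\psi$ that agrees with $\phi$ except for $\psi(a)=k$ is again an injective morphism of ordered sets, and $\image\psi=(\image\phi\setminus\{k+1\})\cup\{k\}$, so $\image\psi^*=(\image\phi^*\setminus\{k\})\cup\{k+1\}$; thus $k\in\image\psi$ and $k+1\in\image\psi^*$. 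The reverse assignment sends a $\psi$ with $\psi(b)=k$ to the $\phi$ with $\phi(b)=k+1$, and the two constructions are visibly mutually inverse, the pivot index being preserved ($a=b$).

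I would then dispatch the three compatibility claims by elementary bookkeeping. The images $\image\phi,\image\psi$ agree away from $\{k,k+1\}$, and likewise $\image\phi^*,\image\psi^*$; moreover $\psi$ differs from $\phi$ only at the index $a$. So a slot $i\notin\{k,k+1\}$ lies in $\image\phi$ iff it lies in $\image\psi$: if $i=\phi(j)=\psi(j)$ then $\gamma(\phi,i)=y_j=\gamma(\psi,i)$, and if $i=\phi^*(j)$ the conjugating word for slot $i$ is $y_1\cdots y_m$ with $m=\phi^*(j)-j$ the number of $\image\phi$-elements below $i$, a count that is unchanged upon replacing $k+1$ by $k$ because $i<k$ or $i>k+1$; hence $\gamma(\phi,i)=\gamma(\psi,i)$. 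For the two exchanged slots, let $m$ be the number of $\image\phi$-elements below $k$ and $Y=y_1\cdots y_m$; one computes $a=m+1$ and $k=\phi^*(c)$ with $c=k-m$, so $\gamma(\phi,k)\gamma(\phi,k+1)=\bigl(Y^{-1}x_cY\bigr)\,y_{m+1}$, while $k=\psi(m+1)$ and $k+1=\psi^*(c)$ give $\gamma(\psi,k)\gamma(\psi,k+1)=y_{m+1}\bigl((Yy_{m+1})^{-1}x_c(Yy_{m+1})\bigr)$; these coincide by the identity $y_{m+1}(Yy_{m+1})^{-1}x_c(Yy_{m+1})=Y^{-1}x_cYy_{m+1}$, which holds in our setting — despite $G$ being only a monoid — since $M$ is central (cf.\ the discussion after \ref{prop:existence-of-conjugation}). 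Finally, the sign is immediate: among $\phi^*(1),\dots,\phi^*(q)$ the value $k$ is replaced by $k+1$ and nothing else moves, so $\sum_i\psi^*(i)=\sum_i\phi^*(i)+1$ and $\sgn(\psi)=-\sgn(\phi)$.

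I expect the one genuinely fiddly step to be the index bookkeeping for the two exchanged slots — specifically noticing that the pivot index $a$ equals $m+1$, so that the $y$-letter which migrates is precisely $y_{m+1}$ and the conjugating word of the displaced $x$-letter grows by exactly that same factor; this is what makes the identity above close up. The remaining verifications are routine comparisons of finite ordered sets.
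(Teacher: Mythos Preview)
Your construction is exactly the paper's: change the single value $\phi(a)=k+1$ to $\psi(a)=k$ and back, and then verify the three compatibility claims. You have simply supplied the bookkeeping that the paper suppresses with ``It is clear''; the argument is correct and follows the same route.
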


\begin{proposition}\label{prop:shuffling-and-partial-coboundary}
  For $p,q\geq 1$ and $\underline{z}\in F_{p+q}$ we have \[
    \partial\shuffle_p^{p+q}(\underline{z}) =
    (\shuffle_p^{p+q-1}\partial_q\underline{z}) + (-1)^q
    (\shuffle_{p-1}^{p+q-1}\delta_p\underline{z}).\] Consequently, for $f\in
  C^{p+q-1}$ the following identity holds: \[ \shuffle_p^{p+q}(\partial f) =
    \partial_q(\shuffle_p^{p+q-1}(f)) + (-1)^q
    \delta_p(\shuffle_{p-1}^{p+q-1}(f)).\]
\end{proposition}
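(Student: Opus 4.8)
The strategy is to prove the first identity at the level of the free $\Z[\forget G]$-modules $F_\bullet$ by a direct combinatorial expansion, and then to obtain the cochain identity by applying the contravariant functor $\Hom_{\Z[\forget G]}(-,A)$. Fix an injective monotone $\phi\colon\ceil{p}\ra\ceil{p+q}$ and write $\underline z^\phi=(\gamma_1,\dots,\gamma_{p+q})$ as in \ref{def:shuffle}. Expanding $\partial(\underline z^\phi)$ according to \ref{def:abstract-inhomogeneous-coboundary-operator} produces a \emph{leading} summand $\gamma_1.(\gamma_2,\dots,\gamma_{p+q})$, a \emph{trailing} summand $(-1)^{p+q}(\gamma_1,\dots,\gamma_{p+q-1})$, and $p+q-1$ \emph{merge} summands $(-1)^k(\gamma_1,\dots,\gamma_k\gamma_{k+1},\dots,\gamma_{p+q})$. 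Summing over $\phi$ with weights $\sgn(\phi)$, the left-hand side becomes a signed sum over all of these, and the whole proof consists in reorganising this sum so as to recognise the right-hand side. The first step is to discard the merge summands with $k$ and $k+1$ on ``different sides'' of the shuffle, i.e.\ $k\in\image\phi^*,\ k+1\in\image\phi$ or $k\in\image\phi,\ k+1\in\image\phi^*$: \ref{prop:killing-impure-terms} furnishes a sign-reversing bijection of the relevant index set fixing $\gamma(\phi,k)\gamma(\phi,k+1)$ and every $\gamma(\phi,i)$ with $i\neq k,k+1$, so these contributions cancel in pairs.

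It then remains to match the two ``pure'' families of merge summands, together with the leading and trailing ones, against the right-hand side. For a merge with $k=\phi(a),\ k+1=\phi(a+1)$, merging $\gamma_k\gamma_{k+1}=y_a y_{a+1}$ leaves the indices $\phi^*(i)-i$ governing the conjugating words $y_1\cdots y_{\phi^*(i)-i}$ untouched, so the result is $(\delta_p\underline z)^\psi$ for the obvious $\psi\colon\ceil{p-1}\ra\ceil{p+q-1}$; the leading summand with $\phi(1)=1$ (resp.\ the trailing summand with $\phi(p)=p+q$) contributes the term $y_1.(y_1^{-1}\underline x y_1,y_2,\dots,y_p)$ (resp.\ $(-1)^p(\underline x,y_1,\dots,y_{p-1})$) of $\delta_p$, and together these assemble to $(-1)^q\shuffle_{p-1}^{p+q-1}\delta_p\underline z$. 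Dually, for a merge with $k=\phi^*(a),\ k+1=\phi^*(a+1)$, consecutiveness forces $\phi^*(a)-a=\phi^*(a+1)-(a+1)$, hence $\gamma_k$ and $\gamma_{k+1}$ are conjugated by the \emph{same} word $c=y_1\cdots y_{\phi^*(a)-a}$ and $\gamma_k\gamma_{k+1}=c^{-1}(x_a x_{a+1})c$, precisely the merge produced by $\partial_q$; the remaining leading and trailing summands (those with $\phi(1)\neq 1$, resp.\ $\phi(p)\neq p+q$) supply the other terms of $\partial_q$, and together these assemble to $\shuffle_p^{p+q-1}\partial_q\underline z$. Comparing, the first identity follows.

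For the ``consequently'' clause nothing further is needed. By their very definitions (\ref{def:abstract-inhomogeneous-coboundary-operator}, \ref{def:shuffle}, \ref{def:partial-coboundary-operators}) the cochain operators $\partial$, $\shuffle_\bullet^\bullet$, $\partial_q$, $\delta_p$ are obtained from the like-named operators on $F_\bullet$ by applying $\Hom_{\Z[\forget G]}(-,A)$, and each of them sends $C^\bullet$ into $C^\bullet$ because every manipulation it performs — reordering, merging adjacent arguments, conjugation, the module action — is induced by a morphism in $\cat C$ (cf.\ \ref{prop:coboundary-operator-well-defined} and the remarks accompanying \ref{def:shuffle} and \ref{def:partial-coboundary-operators}). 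Applying $\Hom_{\Z[\forget G]}(-,A)$ to the first identity therefore yields the second.

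I expect the one genuine obstacle to be the sign bookkeeping: in each of the ``pure'' cases, and for each of the four boundary summands, one must verify that the product of the sign $(-1)^k$ coming from $\partial$, the weight $\sgn(\phi)$, and the sign built into $\partial_q$ or $\delta_p$ collapses, after the reindexing passage from $\phi$ to $\psi$, to exactly $\sgn(\psi)$ in the $\partial_q$-block and to $(-1)^q\sgn(\psi)$ in the $\delta_p$-block. This is where \ref{prop:shuffle-sign-identity}, i.e.\ $\sum_i(\phi^*(i)-i)+\sum_i(\phi(i)-i)=pq$, together with its consequences for how $\sgn(\phi)$ changes when a $y$ or a conjugated $x$ is removed from the tuple, does all the work.
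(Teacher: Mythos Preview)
Your outline is correct and is precisely the approach the paper has in mind: the paper does not give its own proof here but defers to Hochschild--Serre \cite{MR0052438} and the author's thesis, and the lemmas it prepared beforehand (\ref{prop:killing-impure-terms} for the cancellation of mixed merges and \ref{prop:shuffle-sign-identity} for the sign bookkeeping) are exactly the ingredients you invoke. Your decomposition into leading/trailing/merge summands, the pairing off of impure merges, and the identification of the two pure families with the $\partial_q$- and $\delta_p$-blocks is the Hochschild--Serre argument, and your remark that the sign verification is the only genuinely laborious part is accurate.
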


The proof of this is of course a combinatorial nightmare. Details can be found
in \cite[proposition 2]{MR0052438}, and even more details in \cite[proposition
3.6.22]{thomas:on-analytic-and-iwasawa-cohomology}.

\subsection{Comparison of the second page}

So far, we only considered the \emph{groups} $C^p(G/U,H^q(U,A))$ with the
$E_1$-terms corresponding to the spectral sequence attached to $I^\bullet
C^\bullet.$ Now we need to give $C^\bullet(G/U, H^q(U,A))$ the structure of a
\emph{complex}. For this it suffices to give $H^q(U,A)$ the structure of a
$\forget(G/U)$-module, as then $C^\bullet(G/U,
H^q(U,A))=C^\bullet(G/U,\cat{F}H^q(U,A))$ is a complex by
\ref{sec:setup-topological-cochains}. The module-structure also exists for
non-open subgroups $N$.

We also need compatibility between our partial coboundary operators
$\partial_q,\delta_p$ and the operators \[\delta\colon C^p(G/U, \cat{F}C^q(U,A)) \ra
  C^{p+1}(G/U, \cat{F}C^q(U,A))\] and \[ \partial\colon C^p(G/U, \cat{F}C^q(U,A)) \ra
  C^p(G/U, \cat{F}C^{q+1}(U,A)).\] 
\begin{proposition}
  \label{prop:action-on-cochains-and-cohomology}
  $C^q(N,A)$ and $H^q(N,A)$ carry the structure of a $\forget G$-module by the usual
  conjugation action.
  \begin{proof}
    Recall that every element in $y\in\forget G$ induces a conjugation morphism
    on $G$ (\ref{prop:existence-of-conjugation}), which restricts to a morphism
    on $N$. Defining \[ (y.(\mbof{f}{C^q(N,A)}))(\mbof{\underline{x}}{N^q}) =
      y.(f(y^{-1}\underline{x}y))\] yields an element in $C^q(N,A)$ because of
    \ref{def:module-w-rigidification}, so altogether we get a $G$-action on
    $C^q(N,A).$  As in the classical case, this also gives an
    action on the cohomology groups.
  \end{proof}
\end{proposition}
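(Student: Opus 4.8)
The plan is to write down the conjugation action on cochains explicitly, check that it respects the rigidification so that it genuinely defines a $\forget G$-module structure on $C^q(N,A)$, and then observe that it commutes with $\partial$, so that it descends to $H^q(N,A)$.

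First I would recall from \ref{prop:existence-of-conjugation} that each $y\in\forget G$ induces a conjugation morphism $c_y\colon G\ra G$ in $\cat{C}$, trivial on $M$ and the usual conjugation $g\mapsto y^{-1}gy$ on $G'$. Since $N'$ is normal in $G'$ and $M$ is central, the set-theoretic image of $N$ under $c_y$ lies in $N$; hence the composite $N\ra G\ra G\ra G/N$ (inclusion, then $c_y$, then $\pi$) has trivial image on sets, so by faithfulness of $\forget$ it agrees with the trivial morphism, and the universal property of the kernel $N\ra G$ yields a (unique) restriction $c_y\colon N\ra N$ in $\cat{C}$. For $f\in C^q(N,A)$ I then define $(y.f)(x_1,\dots,x_q)=y.\bigl(f(y^{-1}x_1y,\dots,y^{-1}x_qy)\bigr)$, where $y^{-1}(-)y$ abbreviates $c_y$ applied in each slot.

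Next I would verify that $y.f$ again lies in $C^q(N,A)$. As a map of sets it factors as $N^q\ra N^q\ra A\ra A$, the first arrow being $c_y$ componentwise (a morphism in $\cat{C}$), the second being $f$, and the third being multiplication by the fixed element $y$; the latter sends $h_A$-valued maps to $h_A$-valued maps — one precomposes the second bullet of \ref{def:module-w-rigidification} with the constant map $\cat{F}\bullet\ra G$ at $y$ supplied by \ref{prop:top-cat-has-const-morph} — so $y.f\in h_A(N^q)$, and it is normalised because $c_y(1)=1$. The monoid-action axioms $1.f=f$ and $(yz).f=y.(z.f)$ then follow from $c_1=\id$, $c_z\circ c_y=c_{yz}$, the module axiom $(yz).a=y.(z.a)$, and the relation $xy=y\cdot(y^{-1}xy)$ (valid since $M$ is central, cf.\ the remark after \ref{prop:existence-of-conjugation}); in particular $x.(y.a)=(xy).a=y.\bigl((y^{-1}xy).a\bigr)$ for $x\in N$.

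Finally, to pass to cohomology I would check that $\partial(y.f)=y.(\partial f)$. Expanding the coboundary of $y.f$ and pulling $y.$ out of every summand — using $x_1.(y.a)=y.\bigl((y^{-1}x_1y).a\bigr)$ on the head term, the fact that $c_y$ is a homomorphism, i.e.\ $y^{-1}(x_ix_{i+1})y=(y^{-1}x_iy)(y^{-1}x_{i+1}y)$, on the face terms, and plain functoriality of $y.$ on the tail term — reproduces exactly $y.\bigl((\partial f)(y^{-1}x_1y,\dots,y^{-1}x_{q+1}y)\bigr)$. So $f\mapsto y.f$ is a chain endomorphism of $C^\bullet(N,A)$, it induces an endomorphism of $H^q(N,A)$, and functoriality in $y$ is inherited verbatim from the cochain level; this is the asserted $\forget G$-module structure. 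This proposition is essentially routine; the only points that need care are that $G$ is a monoid, not a group — so $c_y$ is literally conjugation only on the $G'$-factor and one must invoke centrality of $M$ for the identities above — and that every intermediate map remains inside the rigidification $h_A$; both are handled by \ref{prop:existence-of-conjugation}, \ref{prop:top-cat-has-const-morph}, and \ref{def:module-w-rigidification}.
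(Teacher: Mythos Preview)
Your proof is correct and follows essentially the same approach as the paper: define the conjugation action via the formula $(y.f)(\underline{x})=y.(f(y^{-1}\underline{x}y))$, invoke \ref{prop:existence-of-conjugation} to get the conjugation morphism in $\cat{C}$, and use \ref{def:module-w-rigidification} to see that $y.f$ lands in $C^q(N,A)$. The paper's proof is much terser and simply gestures at the classical argument for the passage to cohomology, whereas you spell out the restriction of $c_y$ to $N$, the module axioms, and the commutation with $\partial$ in full; this is additional detail, not a different strategy.
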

 
\begin{lemma}\label{prop:partial-coboundary-and-rp}
  The diagrams
  \[
    \begin{tikzcd}
      {C^{p+q}(G,A)} \arrow[r, "r_p"] \arrow[d, "\delta_{p+1}"] & {C^p(G/U,C^q(U,A))} \arrow[d, "\delta"] \\
      {C^{p+q+1}(G,A)} \arrow[r, "r_{p+1}"]                   & {C^{p+1}(G/U,C^q(U,A))} 
    \end{tikzcd}
  \] and
  \[
    \begin{tikzcd}
      {C^{p+q}(G,A)} \arrow[r, "r_p"] \arrow[d, "\partial_{q+1}"] & {C^p(G/U,C^q(U,A))} \arrow[d, "\partial"] \\
      {C^{p+q+1}(G,A)} \arrow[r, "r_{p}"]                   & {C^{p}(G/U,C^{q+1}(U,A))} 
    \end{tikzcd}
  \] are commutative. As before, $\delta_{p+1}$ and $\partial_{q+1}$ are the
  partial coboundary operators from \ref{def:partial-coboundary-operators} and
  $\delta$ and $\partial$ are the respective coboundary operators of
  $C^\bullet(G/U,-)$ and $C^\bullet(U,-)$.
  \begin{proof}
    Immediate from the definitions.
  \end{proof}
\end{lemma}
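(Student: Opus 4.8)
The assertion is that two squares commute, and the plan is to prove each by expanding both composites on a general argument and matching the resulting sums of summands; this is precisely what makes the claim essentially formal, modulo careful bookkeeping. Throughout I would fix $f$ in the domain $I^pC^{p+q}$ of $r_p$, classes $\bar x_1,\dots,\bar x_{p+1}\in\forget(G/U)$, and elements $\sigma_1,\sigma_2,\dots\in\forget U$, and recall that $r_p(f)(\bar x_1,\dots,\bar x_p)$ is the $U$-cochain $(\sigma_1,\dots,\sigma_q)\mapsto f(\sigma_1,\dots,\sigma_q,s(\bar x_1),\dots,s(\bar x_p))$.

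I would treat the $\partial$-square first, as it is the clean one. Unwinding the coboundary of $C^\bullet(U,-)$, the cochain $\partial\bigl(r_p(f)(\bar x_1,\dots,\bar x_p)\bigr)$ evaluated at $(\sigma_1,\dots,\sigma_{q+1})$ is the sum of the term $\sigma_1.f(\sigma_2,\dots,\sigma_{q+1},s(\bar x_1),\dots,s(\bar x_p))$, the merging terms $(-1)^i f(\sigma_1,\dots,\sigma_i\sigma_{i+1},\dots,\sigma_{q+1},s(\bar x_1),\dots,s(\bar x_p))$, and $(-1)^{q+1}f(\sigma_1,\dots,\sigma_q,s(\bar x_1),\dots,s(\bar x_p))$. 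On the other side, $r_p(\partial_{q+1}f)(\bar x_1,\dots,\bar x_p)(\sigma_1,\dots,\sigma_{q+1})$ is by definition $\partial_{q+1}f$ evaluated at $(\sigma_1,\dots,\sigma_{q+1},s(\bar x_1),\dots,s(\bar x_p))$, and the formula for $\partial_{q+1}$ in \ref{def:partial-coboundary-operators} produces exactly the same three groups of summands — crucially, $\partial_{q+1}$ modifies only the first $q+1$ slots, and the merging products $\sigma_i\sigma_{i+1}$ stay inside $U$ since $U$ is a submonoid, so no section is inserted. The two composites therefore coincide slot for slot, and nothing further is needed.

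The $\delta$-square I would handle in the same manner, now against the coboundary of $C^\bullet(G/U,-)$, in which a class $\bar x_j$ acts on $C^q(U,A)$ through its representative $s(\bar x_j)$ via the conjugation action of \ref{prop:action-on-cochains-and-cohomology}. Comparing $\delta\bigl(r_p(f)\bigr)(\bar x_1,\dots,\bar x_{p+1})$ with $r_{p+1}(\delta_{p+1}f)(\bar x_1,\dots,\bar x_{p+1})$, each evaluated at $(\sigma_1,\dots,\sigma_q)$: the leading terms agree because conjugation by $s(\bar x_1)$ preserves $U^q$ (as $U$ is normal in $G$) and is exactly the action appearing in both \ref{def:partial-coboundary-operators} and \ref{prop:action-on-cochains-and-cohomology}; the final terms agree verbatim; and the merging terms agree because, although $\delta_{p+1}$ produces $f(\dots,s(\bar x_i)s(\bar x_{i+1}),\dots)$ whereas the $C^\bullet(G/U,-)$-coboundary produces $r_p(f)(\dots,\bar x_i\bar x_{i+1},\dots)=f(\dots,s(\bar x_i\bar x_{i+1}),\dots)$, the two lifts $s(\bar x_i)s(\bar x_{i+1})$ and $s(\bar x_i\bar x_{i+1})$ lie in the same coset (since $\pi$ is a homomorphism) and hence differ, in an argument slot lying among the last $p$ slots of $f$, by right multiplication by an element of $\forget U$; as $f\in I^pC^{p+q}$ those slots are $\forget U$-invariant by \ref{prop:n-invariance-enough-for-i-filt}, so $f$ does not see the difference. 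This last point is the only place the filtration hypothesis enters, and the main obstacle — insofar as there is one — is purely organisational: keeping track of which argument slots are ``$U$-slots'' and which are ``$s(G/U)$-slots'', and noting that $\partial_{q+1}$ touches only the former while $\delta_{p+1}$ touches only the latter, so that under $r_p$ they translate respectively into the $C^\bullet(U,-)$- and the $C^\bullet(G/U,-)$-coboundary. Once this is set up, the verification is just matching summands, which is why it is fair to call it immediate.
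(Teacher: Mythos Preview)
Your proposal is correct and follows the same approach the paper intends by ``immediate from the definitions'': you unwind both composites on a generic argument and match summands. You in fact go further than the paper by isolating the one point that is not entirely formal, namely that in the $\delta$-square the merging term on the $r_{p+1}\circ\delta_{p+1}$ side involves $s(\bar x_i)s(\bar x_{i+1})$ while on the $\delta\circ r_p$ side it involves $s(\bar x_i\bar x_{i+1})$, and you correctly resolve this using $\forget U$-invariance of the last $p$ slots of $f\in I^pC^{p+q}$; this is precisely why $r_p$ is only defined on $I^pC^{p+q}$ in the first place, even though the diagram in the lemma is drawn with $C^{p+q}(G,A)$ in the corners.
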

\begin{proposition}
  \label{prop:group-operates-trivially-on-its-cohomology}
  $\forget N$ operates trivially on $H^q(N,A).$
  \begin{proof}
    We use \ref{prop:shuffling-and-partial-coboundary} for the topologised
    monoid $N$: For $p=1$ and $f\in C^q(N,A)\cap\ker\partial$ this reads \[0
      =\shuffle_q^{1+q}(\partial f)= \partial_q(\shuffle_1^q(f)) +
      (-1)^q\delta_1(\shuffle_0^q(f))\] and
    hence \[\delta_1(f)\in\image\partial_q.\] But $\delta_1(f)$ is explicitly
    given by \[ (\delta_1 f)(\mbof{\underline{x}}{N^q},\mbof{y}{N})=
      y.f(y^{-1}\underline{x}y)-f(\underline{x}) =
      (y.f)(\underline{x})-f(\underline{x}),\] so $y.f$ and $f$ are
    cohomologous, as $\partial_q$ is the differential on $C^\bullet(N,A),$
    analogously to \ref{prop:partial-coboundary-and-rp}.
  \end{proof}
\end{proposition}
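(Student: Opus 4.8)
The plan is to deduce the statement from the single shuffling identity \ref{prop:shuffling-and-partial-coboundary}, applied to the topologised monoid $N$ itself in place of $G$; this is the same combinatorial collapse that, classically, shows a group acts trivially on its own cohomology. By \ref{prop:action-on-cochains-and-cohomology} the formula $(y.f)(\underline{x})=y.f(y^{-1}\underline{x}y)$ endows $C^q(N,A)$ and $H^q(N,A)$ with a $\forget N$-module structure (by restricting the $\forget G$-action), and we must show that the induced action on $H^q$ is trivial. For $q=0$ this is immediate, since $H^0(N,A)=A^{\forget N}$ and every $y\in\forget N$ fixes $A^{\forget N}$ by definition; so assume $q\geq 1$.

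First I would specialise \ref{prop:shuffling-and-partial-coboundary} to $p=1$ and to a cocycle $f\in C^q(N,A)\cap\ker\partial$. Since $\partial f=0$ and $\shuffle_0^{q}=\id$, the identity \[\shuffle_1^{q+1}(\partial f)=\partial_q\bigl(\shuffle_1^{q}(f)\bigr)+(-1)^q\,\delta_1\bigl(\shuffle_0^{q}(f)\bigr)\] collapses to $0=\partial_q(\shuffle_1^{q}f)+(-1)^q\delta_1(f)$, whence $\delta_1(f)=(-1)^{q+1}\,\partial_q(\shuffle_1^{q}f)$.

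Next I would unwind the partial coboundary operators of \ref{def:partial-coboundary-operators} for $p=1$. For $\delta_1$ only the conjugation term and the $(-1)^p$-term survive, so $(\delta_1 f)(\mbof{\underline{x}}{N^q},\mbof{y}{N})=y.f(y^{-1}\underline{x}y)-f(\underline{x})=(y.f)(\underline{x})-f(\underline{x})$; hence, for each fixed $y\in\forget N$, the $q$-cochain $(\delta_1 f)(-,y)$ is exactly $y.f-f$ in $C^q(N,A)$. For $\partial_q$ with $p=1$ the single last argument is inert, so for any $g\in C^{q}$ the map $\underline{x}\mapsto(\partial_q g)(\underline{x},y)$ is precisely the coboundary in $C^\bullet(N,A)$ of $g(-,y)\in C^{q-1}(N,A)$ — the bookkeeping underlying \ref{prop:partial-coboundary-and-rp}. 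Combining these two facts with the relation from the previous paragraph, for every $y\in\forget N$ we obtain \[y.f-f=(-1)^{q+1}\,\partial\bigl(\shuffle_1^{q}(f)(-,y)\bigr)\quad\text{in }C^q(N,A),\] so $y.f$ and $f$ are cohomologous, which is the assertion.

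The one point needing care is this last identification. First, one has to check that the shuffling identity is genuinely available over the monoid $N$: this holds because the shuffles and the partial coboundary operators are built solely from morphisms in $\cat{C}$ together with the centrality of the monoid factors — in particular, conjugation by $y\in\forget N$ makes sense, cf.~\ref{prop:existence-of-conjugation}. Second, one has to verify the explicit descriptions of $\partial_q$ and $\delta_1$ used above, namely that for $p=1$ the operator $\partial_q$ is the coboundary of $C^\bullet(N,A)$ acting in the first $q$ slots with $y$ spectating, while $\delta_1$ is the one-element conjugation difference. Everything else is formal.
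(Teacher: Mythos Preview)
Your proof is correct and follows essentially the same route as the paper: specialise \ref{prop:shuffling-and-partial-coboundary} to $p=1$ over the monoid $N$, read off $\delta_1(f)\in\image\partial_q$, and identify $\delta_1(f)(-,y)=y.f-f$ while $\partial_q$ with the last argument frozen is the ordinary coboundary on $C^\bullet(N,A)$. Your explicit treatment of $q=0$ and of the sign $(-1)^{q+1}$ is a welcome bit of extra care, but the argument is otherwise the same.
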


\begin{theorem}
  \label{prop:topological-hochschild-serre}
  There is a convergent $E_2$-spectral sequence \[ H^p(G/U, H^q(U,A))
    \Longrightarrow H^{p+q}(G,A).\]

  Even if $N$ is not necessarily open, we have the classical five term exact sequence:
  \[ \begin{tikzcd}
      0 \arrow{r} & H^1(G/N,A^N) \arrow{r} & H^1(G,A)\arrow{r} &
      H^1(N,A)^{\forget(G/N)}\arrow[looseness=1.2, overlay, out=355, in=175]{dll} \\
      & H^2(G/N,A^N)\arrow{r} & H^2(G,A).
    \end{tikzcd}\]
  \begin{proof}
    Consider first the case of $N=U.$
    By \cref{prop:hochschild-serre-iso-on-e1} it suffices to show that the
    following diagram commutes:
    \begin{equation}
      \label[diagram]{diag:hs-comparing-e1-differentials}
      \tag{$\star$}
      \begin{tikzcd}
        ss(I^\bullet C^\bullet)^{p,q}_1 \arrow{r}{\partial}\arrow{d}{r_p} &
        ss(I^\bullet C^\bullet)^{p+1,q}_1 \arrow{d}{r_{p+1}}\\
        C^p(G/U, H^q(U,A)) \arrow{r}{(-1)^q\delta} & C^{p+1}(G/U, H^q(U,A))
      \end{tikzcd}
    \end{equation}
    Here $\delta$ denotes the coboundary operator on $C^p$, not on lifted
    maps $U^q\times G^p\sra A.$ By \ref{prop:partial-coboundary-and-rp} \[
      \delta \circ r_p = r_{p+1}\circ\delta_{p+1}.\]

    For $q=0,$ the commutativity follows immediately from the definitions, so
    assume $q\geq 1.$ Take $f\in I^pC^{p+q}$ with $\partial f\in
    I^{p+1}C^{p+q+1}$ Then by \cref{prop:shuffling-and-partial-coboundary} and
    multiple applications of \cref{prop:shuffling-is-identity-if-deep-enough},
    \begin{align*}
      r_{p+1}(\partial f) ={}& r_{p+1}(\shuffle_{p+1}^{p+q+1}(\partial f))\\
      ={}& r_{p+1}\left(  \partial_q (\shuffle_{p+1}^{p+q} f) + (-1)^q \delta_{p+1}(\shuffle_p^{p+q} f) \right)\\
      ={}& r_{p+1}(\partial_q(\shuffle_{p+1}^{p+q}f)) + (-1)^q\delta(r_p(\shuffle_p^{p+q} f)) \\
      ={}&r_{p+1}(\partial_q(\shuffle_{p+1}^{p+q}f)) + (-1)^q\delta(r_p( f)) .
    \end{align*}
    As clearly $r_{p+1}(\partial_q(\shuffle_{p+1}^{p+q} f))=0 $ in $C^{p+1}(G/U, H^q(U,A))$
    by \ref{prop:partial-coboundary-and-rp}, this finishes the proof for open
    subgroups.

    For normal subgroups $N$, the spectral sequence \[ss(I^\bullet
      C^\bullet)^{p,q}_2 \Longrightarrow H^{p+q}(G,A)\] still yields a five term
    exact sequence and we are left with showing that the groups $ss(I^\bullet
    C^\bullet)^{1,0}_2$, $ss(I^\bullet C^\bullet)^{2,0}_2$ and $ss(I^\bullet
    C^\bullet)^{0,1}_2$ are precisely the cohomology groups we were looking for.
    For $q=0,$ the same as above argument works, using
    \ref{prop:iso-on-e1-for-closed-subgrp} instead of
    \ref{prop:hochschild-serre-iso-on-e1}. For the case of $p=0,q=1,$ we cannot
    use \ref{diag:hs-comparing-e1-differentials}. But the same argument as above
    yields a commutative diagram
    \[
      \begin{tikzcd}
        ss(I^\bullet C^\bullet)^{0,1}_1 \arrow{rr}{\partial}\arrow{d}{r_0} & &
        ss(I^\bullet C^\bullet)^{1,1}_1 \arrow[hook]{d}\\
        H^1(N,A) \arrow{r}{=} & C^0(\forget(G/N), H^1(N,A)) \arrow{r}{(-1)\delta} & C^{1}(\forget(G/N), H^1(N,A))
      \end{tikzcd}
    \]
    where the map on the left is induced by the restriction of $f\in C^1(G,A)$
    to $N$. The map on the right, defined analogously to before, is however only
    injective.
    
    The map on the left is
    however still bijective, adapting the proof of
    \ref{prop:hochschild-serre-iso-on-e1}: Represent an element of $ss(I^\bullet
    C^\bullet)^{0,1}_1$ by $f\in C^1(G,A).$ Assume its restriction to $N$ is given
    by $f(n)= n.a-a$ for some $a\in A$. Consider $h = f - (x\mapsto x.a-a)$,
    which is the same as $f$ in \[ss(I^\bullet C^\bullet)^{0,1}_1=\frac{\ker C^1\ra
        C^2/I^1C^2}{\partial(C^0)+I^1C^1}.\] We will show
    that indeed $h\in I^1C^1$ and that hence $h$ and
    therefore $f$ is zero in $ss(I^\bullet C^\bullet)^{0,1}_1$. By assumption,
    $\partial f(x,n)=0$ for all $x\in G,n\in N$, so actually \[ f(xn) = x.f(n) +
      f(x)\] We immediately find that
    \[h(xn) =f(xn) - xn.a + a = x.(n.a-a) + f(x) - xn.a + a = f(x)- (x.a - a) =
      h(x),\] so $h\in I^1C^1$ by \ref{prop:n-invariance-enough-for-i-filt}.

    For surjectivity, choose a representative $\widetilde{f}\in C^1(N,A)$ and
    simply define $f$ via $\widetilde{f}\circ (-)_N.$

    Therefore, \[ss(I^\bullet C^\bullet)^{0,1}_2\cong \ker\delta = \{ f\in
      H^1(N,A)\mid g.f-f=0 \text{ for all } g\in G\},\] which is precisely
    $H^1(N,A)^{\forget(G/N)}$, as $N$ already operates trivially by
    \ref{prop:group-operates-trivially-on-its-cohomology}.
  \end{proof}
\end{theorem}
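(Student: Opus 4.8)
The plan is to read the spectral sequence off the $E_1$-spectral sequence $ss(I^\bullet C^\bullet)_1^{p,q}\Longrightarrow H^{p+q}(G,A)$ already attached to the filtered complex $I^\bullet C^\bullet$. Since the filtration of each $C^n$ is finite, this spectral sequence is first quadrant and converges, so it suffices to identify its second page and, for the second assertion, its three corner terms in total degrees $1$ and $2$. For $N=U$, \ref{prop:hochschild-serre-iso-on-e1} identifies $ss(I^\bullet C^\bullet)_1^{p,q}$ with $C^p(G/U,H^q(U,A))$ via $r_p$, and by \ref{prop:action-on-cochains-and-cohomology} together with \ref{prop:group-operates-trivially-on-its-cohomology} the abstract module $H^q(U,A)$ is an honest $\forget(G/U)$-module, so that $C^\bullet(G/U,H^q(U,A))=C^\bullet(G/U,\cat{F}H^q(U,A))$ is a genuine complex computing $H^\bullet(G/U,H^q(U,A))$. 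Everything therefore comes down to showing that, under $r_\bullet$, the $E_1$-differential corresponds to $(-1)^q\delta$ (the sign being irrelevant for the resulting cohomology), i.e.\ to the commutativity of \ref{diag:hs-comparing-e1-differentials}.

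For the row $q=0$ this commutativity is immediate from the definitions, and here \ref{prop:iso-on-e1-for-closed-subgrp} is available even when $N$ is not open, yielding $ss(I^\bullet C^\bullet)_2^{p,0}\cong H^p(G/N,A^N)$. For $q\geq 1$ I would take $f\in I^pC^{p+q}$ representing an $E_1$-cycle, so $\partial f\in I^{p+1}C^{p+q+1}$, and chase it through the shuffling machinery: by \ref{prop:shuffling-is-identity-if-deep-enough} one may replace $\partial f$ by $\shuffle_{p+1}^{p+q+1}(\partial f)$ without affecting $r_{p+1}$; the shuffling identity \ref{prop:shuffling-and-partial-coboundary} rewrites this as $\partial_q(\shuffle_{p+1}^{p+q}f)+(-1)^q\delta_{p+1}(\shuffle_p^{p+q}f)$; the first summand is annihilated by $r_{p+1}$ after passage to $H^q(U,A)$, because $r_\bullet$ intertwines $\partial_q$ with the $U$-coboundary (\ref{prop:partial-coboundary-and-rp}); and in the second summand \ref{prop:shuffling-is-identity-if-deep-enough} gives $\shuffle_p^{p+q}f=f$ on $N^q\times G^p$, while \ref{prop:partial-coboundary-and-rp} gives $r_{p+1}\circ\delta_{p+1}=\delta\circ r_p$, so that this summand becomes $(-1)^q\delta(r_p(f))$. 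This is precisely the asserted commutativity and completes the case $N=U$; convergence then yields the claimed $E_2$-spectral sequence.

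For the five term exact sequence with $N$ not necessarily open, the spectral sequence $ss(I^\bullet C^\bullet)_2^{p,q}\Longrightarrow H^{p+q}(G,A)$ still exists and still produces a five term exact sequence in low degrees, so it remains to name $ss(I^\bullet C^\bullet)_2^{1,0}$, $ss(I^\bullet C^\bullet)_2^{0,1}$ and $ss(I^\bullet C^\bullet)_2^{2,0}$. The two $q=0$ corners are covered by the previous paragraph and give $H^1(G/N,A^N)$ and $H^2(G/N,A^N)$. The term $ss(I^\bullet C^\bullet)_2^{0,1}$ must be treated separately, since \ref{prop:hochschild-serre-iso-on-e1} no longer applies: I would redo its argument in the single bidegree $(0,1)$, first showing that restriction of cochains to $N$ induces a bijection $ss(I^\bullet C^\bullet)_1^{0,1}\cong H^1(N,A)$ (injectivity: if $f|_N$ equals a coboundary $n\mapsto n.a-a$, subtract off $x\mapsto x.a-a$ and use $\partial f(x,n)=0$ together with \ref{prop:n-invariance-enough-for-i-filt} to land in $I^1C^1$; surjectivity: precompose a representative of a class with $(-)_N$), and then running the same shuffling computation — now with a merely injective comparison map $ss(I^\bullet C^\bullet)_1^{1,1}\hookrightarrow C^1(\forget(G/N),H^1(N,A))$ — to see that the $E_1$-differential out of it is $(-1)\delta$. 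Passing to $\ker\delta$ and invoking \ref{prop:group-operates-trivially-on-its-cohomology} to absorb the residual $\forget N$-action then identifies $ss(I^\bullet C^\bullet)_2^{0,1}$ with $H^1(N,A)^{\forget(G/N)}$.

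The main obstacle is the shuffling computation of the middle paragraph: it requires threading together the three reductions coming from \ref{prop:shuffling-is-identity-if-deep-enough}, the shuffling identity \ref{prop:shuffling-and-partial-coboundary}, and the compatibility \ref{prop:partial-coboundary-and-rp} in exactly the right order, so that the unwanted $\partial_q$-term disappears only after passage to $U$-cohomology. The $(0,1)$-case in the non-open setting is a smaller but genuine obstacle, because the comparison map on the target is only injective, so bijectivity of the identification on the source has to be proved by hand rather than quoted from \ref{prop:hochschild-serre-iso-on-e1}.
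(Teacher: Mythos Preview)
Your proposal is correct and follows essentially the same route as the paper: you use the filtered complex $I^\bullet C^\bullet$, identify the $E_1$-page via $r_p$ using \ref{prop:hochschild-serre-iso-on-e1} (resp.\ \ref{prop:iso-on-e1-for-closed-subgrp} for $q=0$), verify the commutativity of \ref{diag:hs-comparing-e1-differentials} via exactly the same chain of applications of \ref{prop:shuffling-is-identity-if-deep-enough}, \ref{prop:shuffling-and-partial-coboundary}, and \ref{prop:partial-coboundary-and-rp}, and handle the $(0,1)$-corner in the non-open case by the same hand-argument (subtracting $x\mapsto x.a-a$ and invoking \ref{prop:n-invariance-enough-for-i-filt}) that the paper uses. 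The only cosmetic difference is that you are slightly more explicit about why $H^q(U,A)$ is a genuine $\forget(G/U)$-module before forming the complex $C^\bullet(G/U,H^q(U,A))$.
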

\begin{remark}
  With all this effort, we still cannot recover the Hochschild-Serre spectral
  sequence for Hausdorff compactly generated topological groups $G$ with closed
  normal subgroup $N$ and discrete coefficients $A$. From the point of view
  presented above, the spectral sequence \[ H^p(G/N,H^q(N,A))\Longrightarrow
    H^{p+q}(G,A)\] is actually an anomaly: The category $\cat{C}$ would be the
  category of compactly generated weakly Hausdorff spaces, which is cartesian
  closed, where the exponential objects are given by $\Hom_{\cat{C}}(X,Y)$,
  endowed with the compact-open topology
  (cf.\,\ref{rmk:internal-hom-in-cgwh-and-discrete-objects}). The analogue of
  \ref{prop:hochschild-serre-iso-on-e1}, which shows an isomorphism of the
  $E_1$-page, is then generally only a bijection -- but for discrete $A$,
  $C^q(N,A)$ (and hence also $H^q(N,A)$) is again discrete and bijectivity then
  suffices for showing the isomorphism.

  In any case, it is much more convenient to derive said spectral sequence from
  homological algebra and reserve the \emph{direct method} for cases where the
  homological arguments fail.
\end{remark}

\section{A Double Complex}
\label{sec:double-complex}
This section is devoted to making a precise statement of the following sort and
proving it afterwards:

\begin{prototheorem}
  Let $G$ be a topologised monoid, $D$ an abelian discrete monoid and $A$ a topologised
  $D\times G$-module. Then in the derived category of abelian groups the
  following holds:
  \[ C^\bullet(D\times G, A) \cong \total C^\bullet(D,C^\bullet(G,A)),\]
\end{prototheorem}

This is very much related to the previous results: There, we filtered the
complex on the left hand side. If we are not looking at a direct product
$D\times G$, there is no double complex on the right hand side -- but a
hypothetical double complex would have $C^\bullet(D,H^\bullet(G,A))$ as the
cohomology in one direction. We compared this cohomology with the $E_1$-page of
the filtered complex and showed that indeed they coincide.

\subsection{Setup and Precise Statement}

For the whole section, we fix:
\begin{itemize}
\item a topological category $\cat{C}$,
\item a topologised monoid $G$ in $\cat{C}$ as in \ref{sec:setup-topological-cochains},
\item a discrete abelian monoid $D$,
\item $-^*\colon D\times G\ra D\times G,$ the morphism of topologised monoids
  which on the level of sets is given by $(d,g)\mapsto (d,1),$
\item $-_G\colon D\times G\ra D\times G,$ the morphism of topologised monoids
  which on the level of sets is given by $(d,g)\mapsto (1,g),$
  \item the canonical projections $\pi_D\colon D\times G\ra D, \pi_G\colon
    D\times G\ra G,$ and
  \item such a $D\times G$-module with $\cat{C}$-rigidification $A,$ that $D$ is
    $A$-pliant.
\end{itemize}

As before, $C^n(G,A)$ denotes the set of normalised (inhomogeneous) cochains
$G^n\sra A.$ We write $C^\bullet$ for $C^\bullet(D\times G,A)$ and denote the
boundary operator of \ref{sec:setup-topological-cochains} by $\partial.$ The
filtration $I^\bullet C^n$ will be taken with respect to the submonoid $G$ of
$D\times G.$

\begin{lemma}\label{prop:direct-product-yields-module-structure-on-cochains}
  The assignment
  \[(\mbof{d}{D}f)(\mbof{\underline{x}}{G^n}) =df(\underline{x})\left(=  df(d^{-1}\underline{x}d)\right) \]
  gives $C^n(G,A)$ the structure of a $D$-module.
  \begin{proof}
    Clear from \ref{prop:action-on-cochains-and-cohomology}.
  \end{proof}
\end{lemma}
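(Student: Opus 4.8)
The plan is to obtain this as a direct special case of \ref{prop:action-on-cochains-and-cohomology}, applied to the ambient topologised monoid $D\times G$ together with its normal submonoid $G$. First I would check that the pair $(D\times G,\,G)$ fits the setup of \ref{sec:setup-topological-cochains}: in that bookkeeping the group part of $D\times G$ is $G'$, its discrete abelian monoid factors are $D$ together with the $M_i$, the submonoid $G$ is obtained by keeping $G'$ and all the $M_i$ but dropping the factor $D$, and the section $(D\times G)/G\cong D\ra D\times G$ required there is precisely the morphism $-^*$ fixed in the present section, whose image contains the unit. Granting this, \ref{prop:action-on-cochains-and-cohomology} endows $C^n(G,A)$ with the structure of a $\forget(D\times G)$-module by $(y.f)(\underline{x}) = y.\bigl(f(y^{-1}\underline{x}y)\bigr)$, where $y^{-1}\underline{x}y$ refers to the conjugation morphism of \ref{prop:existence-of-conjugation}.

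Next I would restrict this action along the monoid homomorphism $D\ra D\times G$, $d\mapsto(d,1)$. For $d\in D$ the conjugation morphism on $G$ induced by $(d,1)$ is the identity: by \ref{prop:existence-of-conjugation} it is trivial on every monoid factor and is conjugation by the $G'$-component of $(d,1)$, i.\,e.\ by the unit, on $G'$; equivalently, $D$ is central in the direct product $D\times G$. Hence $d^{-1}\underline{x}d=\underline{x}$ for all $\underline{x}\in G^n$, the formula collapses to $(d.f)(\underline{x}) = d.f(\underline{x})$, and this is exactly the assignment in the statement (with the harmless rewriting $d.f(\underline{x}) = d.f(d^{-1}\underline{x}d)$). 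Being the restriction of a module structure along a monoid homomorphism, it is automatically a $D$-module structure on $C^n(G,A)$, which is the claim.

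I do not expect any real obstacle. The only points needing a line of care are the bookkeeping in the first step — confirming that $G\leq D\times G$ genuinely satisfies the hypotheses under which \ref{prop:action-on-cochains-and-cohomology} and \ref{prop:existence-of-conjugation} were established, in particular the existence of the relevant section — and observing that the multiplication-by-$d$ morphism $A\ra A$ implicit in $d.f(\underline{x})$ is available via \ref{prop:top-cat-has-const-morph} since $D$ is discrete. Alternatively one could verify the $D$-module axioms on $C^n(G,A)$ by hand, but this only reproduces the computation already carried out for \ref{prop:action-on-cochains-and-cohomology}, so routing everything through that proposition is cleaner.
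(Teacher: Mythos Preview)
Your proposal is correct and follows exactly the route the paper intends: it simply unpacks what ``Clear from \ref{prop:action-on-cochains-and-cohomology}'' means, applying that proposition with $G$ in the role of the normal submonoid of $D\times G$ and then restricting the resulting $\forget(D\times G)$-action along $D\hookrightarrow D\times G$, using centrality of $D$ to simplify the conjugation. There is nothing to add.
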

\begin{remark}
  This only works because of the direct product structure of $D\times G$,
  cf.~\cref{sec:meaning-of-ci-g-mod-n-cj-n-a}.
\end{remark}

\begin{definition}
  Denote by $C^{\bullet,\bullet}$ the commutative double complex
  $C^p(D,C^q(G,A))$ with differentials
  \begin{align*}
    \delta\colon C^p(D,C^q(G,A))\ra C^{p+1}(D,C^q(G,A))\\
    \partial\colon C^p(D,C^q(G,A))\ra C^p(D,C^{q+1}(G,A))\\
  \end{align*}
  explicitly given by
  \begin{align*}
    \delta(f)(\mbof{\underline{y}}{D^{p+1}})(\mbof{\underline{x}}{G^q}) &= y_1 
                                                                          f(y_2,\dots,y_{p+1})(y_1^{-1}\underline{x}y_1) + (-1)^{p+1}f(y_1,\dots,y_p)(\underline{x})\\
    &\quad\quad + \sum_{i=1}^p f(y_1,\dots,y_{i-1},y_iy_{i+1},y_{i+2},\dots,y_{p+1})(\underline{x})
  \end{align*}
  and
\begin{align*}
    \partial(f)(\mbof{\underline{y}}{D^{p}})(\mbof{\underline{x}}{G^{q+1}}) &= x_1 
                                                                          f(\underline{y})(x_2,\dots,x_{q+1}) + (-1)^{q+1}f(\underline{y})(x_1,\dots,x_q)\\
    &\quad\quad + \sum_{i=1}^q f(\underline{y})(x_1,\dots,x_{i-1},x_ix_{i+1},x_{i+2},\dots,x_{p+1}).
  \end{align*}
  That this is indeed a double complex follows from
  \ref{prop:direct-product-yields-module-structure-on-cochains} and the previous
  discussion in \ref{sec:setup-topological-cochains}. We form the total
  complex \[ \left( \total C^{\bullet,\bullet}\right)^n = \bigoplus_{p+q=n}
    C^{p,q}\] with total differential
 \begin{gather*}
    \Delta\colon C^{p,q}\ra C^{p+1,q} \oplus C^{p,q+1}\\
   \Delta = \partial + (-1)^q\delta
 \end{gather*}

\end{definition}

\begin{remark}\label{rmk:total-complex-single-operator}
  If $D\cong \N_0$ (or $D\cong\Z$) operates via a single operator $\varphi,$
  then because of \ref{prop:free-resolution-of-z-is-resolution} and the fact
  that \[ 0 \ra \Z[\varphi]\oset{\varphi -1}{\ra} \Z[\varphi] \ra \Z\ra 0\] is
  also a free resolution of the integers, we see that (by abstract nonsense) \[
    \total C^{\bullet,\bullet} \cong \total \left( C^\bullet(G,A)\oset{\varphi
        -1}{\ra} C^{\bullet}(G,A) \right)\] in the derived category of abelian
  groups. This immediately generalises to monoids $D\cong \N_0^r\times\Z^s$ by
  induction.
\end{remark}

Our main result of this section can now be stated as follows:

\begin{theorem}\label{prop:quasi-iso-in-direct-product-case}
  There is a quasi-isomorphism of complexes \[ C^\bullet\ra \total
    C^{\bullet,\bullet}.\]
\end{theorem}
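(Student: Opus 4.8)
The plan is to write down an explicit cochain-level version of the shuffle (Eilenberg--Zilber) map $\Psi\colon C^\bullet\ra\total C^{\bullet,\bullet}$ and then, after passing to the associated graded objects of suitable filtrations, to identify it with the comparison isomorphism of \ref{prop:hochschild-serre-iso-on-e1}; a spectral sequence comparison then finishes the proof. Concretely, for $f\in C^{p+q}$, $\underline{d}\in D^p$ and $\underline{g}\in G^q$ write $\underline{w}=\bigl((1,g_1),\dots,(1,g_q),(d_1,1),\dots,(d_p,1)\bigr)\in(D\times G)^{p+q}$ and set
\[ \Psi_{p,q}(f)(\mbof{\underline{d}}{D^p})(\mbof{\underline{g}}{G^q}) = \sum_{\phi}\sgn(\phi)\,f(\underline{w}^\phi), \]
the sum running over injective morphisms of ordered sets $\phi\colon\ceil{p}\ra\ceil{p+q}$ and $\underline{w}^\phi$ being formed as in \ref{def:shuffle} with the $(1,g_i)$ as $x$-entries and the $(d_j,1)$ as $y$-entries. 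Since the conjugation action of any $D$-element on $D\times G$ is trivial (\ref{prop:existence-of-conjugation}), the vector $\underline{w}^\phi$ is just the $\phi$-shuffle of the two blocks; each summand is $f$ precomposed with a morphism in $\cat{C}$ (a permutation of coordinates together with the constant maps of \ref{prop:top-cat-has-const-morph}), so $\Psi_{p,q}(f)\in h_A$, and it is normalised because substituting $1$ for some $d_i$ or $g_j$ puts a $1$ into every $\underline{w}^\phi$. Hence $\Psi_{p,q}(f)\in C^p(D,C^q(G,A))$, and we let $\Psi=\bigoplus_{p+q=n}\Psi_{p,q}$ in degree $n$.

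Next I would verify that $\Psi$ is a morphism of complexes. For $f\in C^{p+q-1}$ one has $\sum_\phi\sgn(\phi)(\partial f)(\underline{w}^\phi)=\shuffle_p^{p+q}(\partial f)(\underline{w})$, which \ref{prop:shuffling-and-partial-coboundary} rewrites as $\partial_q\bigl(\shuffle_p^{p+q-1}f\bigr)(\underline{w})+(-1)^q\delta_p\bigl(\shuffle_{p-1}^{p+q-1}f\bigr)(\underline{w})$. Unravelling the partial coboundary operators of \ref{def:partial-coboundary-operators} for $D\times G$ — using once more that the conjugation by $D$-elements occurring in $\delta_p$ acts trivially on the $G$-coordinates, exactly as the conjugation defining the differential of $C^\bullet(D,-)$ does, cf.~\ref{prop:direct-product-yields-module-structure-on-cochains} and \ref{prop:partial-coboundary-and-rp} — the first term becomes $\partial\bigl(\Psi_{p,q-1}f\bigr)(\underline{d})(\underline{g})$ and the second becomes $\delta\bigl(\Psi_{p-1,q}f\bigr)(\underline{d})(\underline{g})$. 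As the total differential is $\Delta=\partial+(-1)^q\delta$, this is precisely $\Psi\circ\partial=\Delta\circ\Psi$.

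Finally I would deduce the quasi-isomorphism from a filtration argument, applying the results of \ref{sec:setup-topological-cochains} and \ref{sec:hochschild-serre} to $D\times G$ with the choice $N=U=G$; this is admissible, as $(D\times G)/G\cong D$ is discrete and $A$-pliant by hypothesis (cf.~\ref{prop:quotient-well-behaved-after-product-with-monoid}) with section $d\mapsto(d,1)$. Filter $C^\bullet$ by $I^\bullet C^\bullet$ and $\total C^{\bullet,\bullet}$ by the column filtration $F^p(\total C^{\bullet,\bullet})^n=\bigoplus_{p'\geq p}C^{p',n-p'}$; both filtrations are finite in each total degree. The map $\Psi$ is filtered: if $f\in I^pC^n$ then $f$ depends only on the $D$-coordinates of its last $p$ arguments, while in $\underline{w}^\phi$ at least $p-p'$ of those positions carry a $(1,g_j)$ — whose $D$-coordinate is $1$ — whenever $p'<p$, so $\Psi_{p',n-p'}(f)=0$ by normalisation; this is the mechanism already used in \ref{prop:rp-trivial-if-too-deep} and \ref{prop:shuffling-is-identity-if-deep-enough}. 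On $E_1$-pages, the source gives $ss(I^\bullet C^\bullet)^{p,q}_1\cong C^p(D,H^q(G,A))$ via $r_p$ by \ref{prop:hochschild-serre-iso-on-e1}, the target gives the $\partial$-cohomology of the columns, again $C^p(D,H^q(G,A))$, and by \ref{prop:shuffling-is-identity-if-deep-enough} the restriction of $\Psi_{p,q}$ to $I^pC^{p+q}$ coincides with $r_p$; hence the map induced by $\Psi$ on $E_1$ is exactly this isomorphism. Since both filtrations are bounded in every degree, the comparison theorem for spectral sequences of filtered complexes shows that $\Psi$ is a quasi-isomorphism.

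The step I expect to be the main obstacle is the verification in the second paragraph: reconciling the shuffle identity \ref{prop:shuffling-and-partial-coboundary} with the two explicit differentials of the double complex, keeping careful track of the signs (which is exactly why $\Delta$ carries the factor $(-1)^q$) and of the fact that every conjugation surviving in the direct-product case is a trivial one, together with the bookkeeping that shows $\Psi$ restricts to $r_p$ on the associated graded. Once this is in place, the remaining steps are routine applications of the material of \ref{sec:hochschild-serre}.
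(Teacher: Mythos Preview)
Your proof is correct and your map $\Psi$ is precisely the paper's $\alpha$, but the route you take to show it is a quasi-isomorphism is genuinely different from the paper's. You filter both complexes, observe that $\Psi$ is a filtered morphism, and invoke \ref{prop:hochschild-serre-iso-on-e1} (applied with the ambient monoid $D\times G$ and $U=G$) to identify the induced map on $E_1$ with the isomorphism $r_p$; the comparison theorem for bounded filtered complexes then finishes. This is clean and short: once you have checked that $\Psi$ respects the filtrations and that $\Psi_{p,q}|_{I^pC^{p+q}}=r_p$, everything follows from results already proved in \ref{sec:hochschild-serre}. The only point to be careful about is that the target $E_1$-page really is $C^p(D,H^q(G,A))$, which amounts to the exactness of $C^p(D,-)$; since $D$ is discrete this is harmless.

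The paper instead argues directly on the cochain level: it builds an explicit right inverse to $\alpha$ (extension along zero, cf.~\ref{prop:ext-by-zero-is-morphism} and \ref{prop:ext-by-zero-is-section-of-alpha}), which immediately gives surjectivity on cohomology, and then proves injectivity by a step-by-step ``improvement'' argument that pushes a cocycle with $\alpha(f)$ a coboundary deeper and deeper into the filtration $I^\bullet C^\bullet$ until it vanishes. What the paper's approach buys is an explicit quasi-inverse as a morphism of complexes --- potentially useful when one needs to transport constructions back and forth --- together with concrete formulae (cf.~\ref{prop:explicit-determination-of-ext-by-zero-for-products}) that are not visible from the spectral-sequence argument. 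Your approach buys brevity and conceptual clarity, at the cost of this explicitness.
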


The preparations of its proof will span the next couple of pages, which itself
is given on page~\pageref{proof:quasi-iso-in-direct-product-case}.

\subsection{The morphism ...}

\begin{definition}
  For $f\in C^{p+q}$ denote by $r_p(f)\in C^{p,q}$ the map \[
    r_p(f)(\mbof{\underline{y}}{D^p})(\mbof{\underline{x}}{G^q}) =
    f((1,x_1),\dots,(1,x_q), (y_1,1),\dots, (y_p,1))
  \]
  and by $\alpha$ the map \begin{gather*}\alpha\colon C^n\ra \bigoplus_{p+q=n} C^{p,q}\\\alpha(f) =
    \bigoplus_{p+q=n} r_p(\shuffle_p^{p+q}(f)).\end{gather*}
\end{definition}
\begin{proposition}
 $\alpha$ is a morphism of complexes $C^\bullet \ra \total C^{\bullet,\bullet}$.
 \begin{proof}
   Consider the diagram
   \[
     \begin{tikzcd}
       C^{n-1} \arrow{rr}\arrow{d}{\partial}& &  \bigoplus_{p+q=n-1} C^{p,q}\arrow{d}{\Delta}\\
       C^{n}\arrow{rr} &&\bigoplus_{p+q=n-1} \left( C^{p+1,q}\oplus C^{p,q+1} \right).
     \end{tikzcd}
   \]
   To show that it commutes, let $p'+q'=n$. We will compare \[
     r_{p'}(\shuffle_{p'}^n(\partial f))\] with the entry of $\Delta(\alpha(f))$
   in $C^{p',q'}$. By definition, this entry is equal to
   \[ \partial(r_{p'}(\shuffle_{p'}^{p'+q'-1}(f))) +
     (-1)^{q'}\delta(r_{p'-1}(\shuffle_{p'-1}^{p'-1+q'}(f))).\]
   By \ref{prop:partial-coboundary-and-rp}, \[ \partial \circ r_{p'} = r_{p'}\circ\partial_{q'}\]
   and \[\delta\circ r_{p'-1} = r_{p'}\circ\delta_{p'},\] where $\partial_{q'}$
   and $\delta_{p'}$ are the maps from \ref{def:partial-coboundary-operators}.
   The claim then follows immediately from the additivity of $r_{p'}$ and
   \ref{prop:shuffling-and-partial-coboundary}.
 \end{proof}
\end{proposition}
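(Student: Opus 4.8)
The plan is to verify the chain-map identity $\Delta\circ\alpha=\alpha\circ\partial$ directly, working one bidegree at a time. Fix $n$ and $f\in C^n$, and fix a bidegree $(p',q')$ with $p'+q'=n$; it suffices to show that the $C^{p',q'}$-components of $\Delta(\alpha(f))$ and of $\alpha(\partial f)$ coincide. The latter is by definition $r_{p'}(\shuffle_{p'}^n(\partial f))$, so the entire task is to identify the former with this expression.

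First I would unpack the total differential. Since $\Delta=\partial+(-1)^q\delta$ on $C^{p,q}$, with $\partial\colon C^{p,q}\ra C^{p,q+1}$ and $\delta\colon C^{p,q}\ra C^{p+1,q}$, the component of $\Delta(\alpha(f))$ in bidegree $(p',q')$ receives exactly two contributions: the image under $\partial$ of the $C^{p',q'-1}$-summand of $\alpha(f)$, and the image under $(-1)^{q'}\delta$ of the $C^{p'-1,q'}$-summand. Spelling out the definition $\alpha(f)_{p,q}=r_p(\shuffle_p^{p+q}(f))$, this component equals
\[
\partial\bigl(r_{p'}(\shuffle_{p'}^{\,p'+q'-1}(f))\bigr) + (-1)^{q'}\,\delta\bigl(r_{p'-1}(\shuffle_{p'-1}^{\,p'-1+q'}(f))\bigr).
\]

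Next I would push the coboundary operators through the restriction maps using \ref{prop:partial-coboundary-and-rp}, applied in the present situation with $D\times G$, $D$, $G$ in place of $G$, $G/U$, $U$; matching inner and outer degrees gives $\partial\circ r_{p'}=r_{p'}\circ\partial_{q'}$ and $\delta\circ r_{p'-1}=r_{p'}\circ\delta_{p'}$. After this substitution both summands have the form $r_{p'}(\cdots)$, so additivity of $r_{p'}$ lets me pull it out front and rewrite the bidegree-$(p',q')$ component of $\Delta(\alpha(f))$ as
\[
r_{p'}\Bigl(\partial_{q'}(\shuffle_{p'}^{\,p'+q'-1}(f)) + (-1)^{q'}\delta_{p'}(\shuffle_{p'-1}^{\,p'-1+q'}(f))\Bigr).
\]
The inner expression is precisely the right-hand side of the shuffling identity \ref{prop:shuffling-and-partial-coboundary} with parameters $(p',q')$, hence equals $\shuffle_{p'}^{\,p'+q'}(\partial f)=\shuffle_{p'}^n(\partial f)$. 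Thus the component is $r_{p'}(\shuffle_{p'}^n(\partial f))$, matching $\alpha(\partial f)$; since $(p',q')$ and $n$ were arbitrary, $\Delta\circ\alpha=\alpha\circ\partial$ follows.

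The only genuine obstacle is bookkeeping rather than new mathematics: I must confirm that the sign $(-1)^{q'}$ built into $\Delta$ is exactly the sign $(-1)^q$ appearing in \ref{prop:shuffling-and-partial-coboundary}, and that the subscripts on $\partial_{q'},\delta_{p'}$ and the superscripts on the shuffle maps agree with the degrees produced by $r_{p'}$ and $r_{p'-1}$. Both the combinatorial heart (the shuffling identity) and the compatibility of $r_\bullet$ with the partial coboundaries are already established in \ref{prop:shuffling-and-partial-coboundary} and \ref{prop:partial-coboundary-and-rp}, so the proposition is an assembly of those two results with careful index matching, and no further combinatorics is required.
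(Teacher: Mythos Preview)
Your proposal is correct and follows essentially the same approach as the paper: both identify the $C^{p',q'}$-component of $\Delta(\alpha(f))$, commute $\partial,\delta$ past $r_{p'},r_{p'-1}$ via \ref{prop:partial-coboundary-and-rp}, and then invoke the shuffling identity \ref{prop:shuffling-and-partial-coboundary}. One minor slip: you write $f\in C^n$ but then take $p'+q'=n$ for the target bidegree; these should satisfy $p'+q'=n+1$ (equivalently, take $f\in C^{n-1}$ as the paper does), though the remainder of your argument is internally consistent once this is corrected.
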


\subsection{\dots\ and its quasi-inverse}

\begin{lemma}
  \label{prop:sharp-basic-properties}
  Consider the map
  \begin{gather*}
    (-)^\sharp \colon C^{p,q}\ra C^{p+q},\\
    f^\sharp (\mbof{\underline{z}}{(D\times G)^{p+q}}) = f(\pi_D(z_{q+1}),\dots,\pi_D(z_{p+q}))(\pi_G(z_1),\dots,\pi_G(z_q)).
  \end{gather*}
  Its image lies in $I^pC^{p+q}$ and the
  composition \[ \begin{tikzcd}C^{p,q}\arrow{r}{-^\sharp} & I^p
      C^{p+q}\arrow{r}{r_p} & C^{p,q}\end{tikzcd}\] is the identity.
  \begin{proof}
    Clear from the definitions.
  \end{proof}
\end{lemma}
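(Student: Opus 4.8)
The plan is to verify both assertions by directly unwinding the definition of $(-)^\sharp$; neither needs any machinery beyond the characterisation of the filtration in \ref{prop:n-invariance-enough-for-i-filt}, applied to the submonoid $N=G$ of $D\times G$. First I would record why $f^\sharp$ is a legitimate element of $C^{p+q}$. Writing $\mathrm{pr}\colon (D\times G)^{p+q}\ra G^q\times D^p$ for the morphism $\underline{z}\mapsto(\pi_G(z_1),\dots,\pi_G(z_q),\pi_D(z_{q+1}),\dots,\pi_D(z_{p+q}))$, which is a morphism in $\cat{C}$ because each $\pi_G,\pi_D$ is, the map $f^\sharp$ is just the composite of $\mathrm{pr}$ with $f$, the latter viewed as a map $G^q\times D^p\sra A$. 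That $f$ may be so viewed is exactly the ($p$-fold) $A$-pliancy of $D$ (cf.~\ref{def:set-w-c-rigidification}), which gives $h_A(G^q\times D^p)=\Hom_{\cat{Set}}(D^p,h_A(G^q))$, a set containing $f$. Hence $f^\sharp\in h_A((D\times G)^{p+q})$. Normalisation is immediate: if $z_i=(1,1)$ then for $i\leq q$ we have $\pi_G(z_i)=1$, so $f^\sharp$ vanishes because $f(\underline{y})$ is normalised on $G^q$, while for $i>q$ we have $\pi_D(z_i)=1$, so $f^\sharp$ vanishes because $f$ is normalised on $D^p$.

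For the statement that the image lies in $I^pC^{p+q}$ I would invoke \ref{prop:n-invariance-enough-for-i-filt}: it suffices to check that $f^\sharp$ is $\forget G$-invariant in its last $p$ arguments. Replacing $z_{q+j}$ by $z_{q+j}\cdot(1,g_j)$ with $(1,g_j)\in G$ leaves $\pi_D(z_{q+j})$ unchanged, since the $G$-factor has trivial $D$-component, and $f^\sharp$ depends on the last $p$ arguments only through their $D$-parts. Thus the value is unchanged, the last $p$ slots are $\forget G$-invariant, and $f^\sharp\in I^pC^{p+q}$ as claimed.

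The second assertion is then a one-line substitution. By the definition of $r_p$ in this section, $r_p(f^\sharp)(\underline{y})(\underline{x})=f^\sharp((1,x_1),\dots,(1,x_q),(y_1,1),\dots,(y_p,1))$ for $\underline{y}\in D^p$ and $\underline{x}\in G^q$. Here $\pi_G((1,x_i))=x_i$ and $\pi_D((y_j,1))=y_j$, so the right-hand side equals $f(y_1,\dots,y_p)(x_1,\dots,x_q)=f(\underline{y})(\underline{x})$; hence $r_p\circ(-)^\sharp=\id_{C^{p,q}}$.

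There is no genuine obstacle here — consistent with the intended ``clear from the definitions'' — and the only point demanding a moment's care is the index bookkeeping: the first $q$ arguments carry the $G$-data and the last $p$ the $D$-data, exactly matching the conventions in the definition of $r_p$ and in the filtration (last $p$ arguments $\forget G$-invariant). Keeping these two orderings aligned is what makes both computations come out on the nose.
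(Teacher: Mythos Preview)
Your proof is correct and is exactly the intended unwinding of the definitions; the paper's own proof is simply ``Clear from the definitions,'' and you have carefully spelled out precisely those verifications (well-definedness via pliancy and functoriality, normalisation, the filtration check via \ref{prop:n-invariance-enough-for-i-filt}, and the direct substitution for $r_p\circ(-)^\sharp$).
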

\begin{proposition}
  \label{prop:section-of-comparison-as-module}
  $(-)^\sharp$ induces a map \[ (-)^\sharp\colon (\total C^{\bullet,\bullet})^n\ra C^n \]
  with \[ \alpha\circ(-)^\sharp = \id_{(\total C^{\bullet,\bullet})^n}.\]
  \begin{proof}
    Let $f\in C^{p,q}$.
    \Cref{prop:sharp-basic-properties,prop:shuffling-is-identity-if-deep-enough}
    (with $N=G$) imply that \[ r_p(\shuffle_p^{p+q}(f^\sharp)) = f.\] It
    now remains to show that for $p'\neq p$,
    $r_{p'}(\shuffle_{p'}^{p+q}(f^\sharp))=0.$

    Let $\varphi\colon\ceil{p'}\ra\ceil{p+q}$ be an injective map of
    ordered sets, so
    \begin{align*} 
      r_{p'}((f^\sharp)^\phi)(\mbof{\underline{y}}{D^{p'}})(\mbof{\underline{x}}{G^{p+q-p'}})
      &=(f^\sharp)^\phi\left( (1,x_1),\dots, (1,x_{p+q-p'}), (y_1,1),\dots, (y_{p'},1) \right)\\
      &=f^\sharp(\gamma_1,\dots,\gamma_{p+q}),
    \end{align*}
    with $\gamma_k$ a conjugate of one of the $(1,x_i)$ or one of the $(y_i,1)$.
    Therefore, at least $p'$ of the $\gamma_k$ have $\pi_G(\gamma_k)=1$ and at
    least $p+q-p'$ of the $\gamma_k$ have $\pi_D(\gamma_k)=1.$
    Now \[f^\sharp(\gamma_1,\dots,\gamma_{p+q}) =
      f(\pi_D(\gamma_{q+1}),\dots,\pi_D(\gamma_{p+q}))(\pi_G(\gamma_1),\dots,\pi_G(\gamma_q)),\]
    and all cocycles are normalised, so this can only be non-zero if all
    $\gamma_k$ with $\pi_D(\gamma_k)=1$ are among the first $q$, so \[p+q-p'\leq
      q\] and if all $\gamma_k$ with $\pi_G(\gamma_k)=1$ are among the last $p$,
    so \[ p' \leq p.\] But this is impossible if $p'\neq p.$ Therefore
    $r_{p'}((f^\sharp)^\phi)=0$ and hence
    also \[r_{p'}(\shuffle_{p'}^{p+q}(f^\sharp))=0.\]
  \end{proof}
\end{proposition}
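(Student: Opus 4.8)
The plan is to establish the two assertions in turn: that $(-)^\sharp$ descends to a map on the total complex, and that $\alpha$ is a left inverse to it. The first is essentially free from \ref{prop:sharp-basic-properties}. Since the component map $(-)^\sharp\colon C^{p,q}\ra C^{p+q}$ lands in $I^pC^{p+q}\subseteq C^{p+q}=C^n$ whenever $p+q=n$, and is additive, I would simply set $\left(\bigoplus_{p+q=n}f_{p,q}\right)^\sharp=\sum_{p+q=n}(f_{p,q})^\sharp\in C^n$; this is manifestly well-defined.

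For the identity $\alpha\circ(-)^\sharp=\id$, additivity lets me reduce to a single homogeneous summand. So I would fix $f\in C^{p,q}$ and, for each pair $(p',q')$ with $p'+q'=n$, compute the $C^{p',q'}$-component of $\alpha(f^\sharp)$, which by definition is $r_{p'}(\shuffle_{p'}^{n}(f^\sharp))$. The aim is to show this equals $f$ when $p'=p$ and vanishes otherwise.

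The diagonal case $p'=p$ is short. The map $r_p$ evaluates a cochain only on tuples whose first $q$ entries lie in the submonoid $G$ (which plays the role of $N$ here) and whose remaining $p$ entries lie in $D$. Since $f^\sharp\in I^pC^{p+q}$ by \ref{prop:sharp-basic-properties}, \ref{prop:shuffling-is-identity-if-deep-enough} applied with $N=G$ gives $\shuffle_p^{p+q}(f^\sharp)=f^\sharp$ on precisely such tuples, whence $r_p(\shuffle_p^{p+q}(f^\sharp))=r_p(f^\sharp)=f$, the last equality being the composition statement of \ref{prop:sharp-basic-properties}.

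The hard part will be the off-diagonal vanishing for $p'\neq p$. Here I would expand $r_{p'}(\shuffle_{p'}^{p+q}(f^\sharp))$ as a signed sum over injective order maps $\phi\colon\ceil{p'}\ra\ceil{p+q}$ as in \ref{def:shuffle}, each term evaluating $f^\sharp$ at a tuple $(\gamma_1,\dots,\gamma_{p+q})$ whose entries are conjugates of the $D$-arguments $(y_i,1)$ or the $G$-arguments $(1,x_j)$. Because $D$ is central, conjugation preserves the triviality of each projection, so exactly $p'$ of the $\gamma_k$ satisfy $\pi_G(\gamma_k)=1$ and exactly $p+q-p'$ satisfy $\pi_D(\gamma_k)=1$. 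Now $f^\sharp(\underline\gamma)$ feeds $\pi_G(\gamma_1),\dots,\pi_G(\gamma_q)$ into the normalised inner $G$-cochain and $\pi_D(\gamma_{q+1}),\dots,\pi_D(\gamma_{p+q})$ into the normalised outer $D$-cochain. Nonvanishing thus forces all $p'$ of the $\pi_G$-trivial entries into the last $p$ positions, giving $p'\leq p$, and all $p+q-p'$ of the $\pi_D$-trivial entries into the first $q$ positions, giving $p\leq p'$; together these contradict $p'\neq p$. Hence every summand vanishes, the off-diagonal component is zero, and the proof is complete. The one delicate point is exactly this double count -- keeping straight which normalisation kills a misplaced argument -- but it is purely combinatorial once the projection bookkeeping is set up.
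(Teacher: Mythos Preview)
Your proof is correct and follows essentially the same route as the paper's: both handle the diagonal component via \ref{prop:sharp-basic-properties} and \ref{prop:shuffling-is-identity-if-deep-enough}, and for the off-diagonal vanishing both expand the shuffle, track which projections of the $\gamma_k$ are trivial, and use normalisation to force the incompatible inequalities $p'\leq p$ and $p\leq p'$. Your explicit remark that centrality of $D$ makes conjugation preserve triviality of each projection is a nice clarification; the paper phrases the count as ``at least'' rather than ``exactly'', but either suffices.
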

\begin{remark}
  The map $(-)^\sharp$ of \ref{prop:section-of-comparison-as-module} is not a
  map of complexes, so while it is easy to construct preimages in the direct
  product case, these are not particularly useful. Showing that $\alpha$ is a
  quasi-isomorphism hence again uses the calculations of \ref{sec:extension-of-cochains}.
\end{remark}

\begin{proposition}
  \label{prop:explicit-determination-of-ext-by-zero-for-products}
  Let $u\in C^{p,q}$ and $g=g(u,0)\in I^pC^{p+q}$ its extension along $0$ from
  \ref{prop:cochain-ext-main-work}. Then \[ g(\mbof{x_1}{D\times
      G},\dots,\mbof{x_q}{D\times G},\mbof{\underline{y}}{(D\times G)^p}) =
    x_1^*\dots x_q^* . u^\sharp(x_1,\dots, x_q,\underline{y}).\]
  \begin{proof}
    Note first that by definition of $-^\sharp,$
    \[ u^\sharp(z_1,\dots,z_q, z_1',\dots,z_p') =
      u^\sharp((z_1)_G,\dots,(z_q)_G, z_1'^*,\dots,z_p'^*).\] Define as in
    \ref{prop:cochain-ext-main-work} \[ g_1(\mbof{x_1}{D\times G},
      \mbof{\underline{\sigma}}{G^{q-2}}, \mbof{\underline{y}}{(D\times G)^p}) =
      x_1^*. u^\sharp((x_1)_G,\underline{\sigma},\underline{y}).
    \] and $g_k = \ext_0(g_{k-1})$ for $2\leq k \leq q,$ so that $g=g_q.$ We
    will inductively show that \[ g_k(\mbof{x_1}{D\times G},\dots,
      \mbof{x_k}{D\times G},
      \mbof{\underline{\sigma}}{G^{q-k}},\mbof{\underline{y}}{(D\times G)^p}) =
      x_1^*\dots x_k^* .
      u^\sharp(x_1,\dots,x_k,\underline{\sigma},\underline{y}),\] which
    is trivial for $k=1$.
    By definition of the extension,
      \[g_{k+1}(\mbof{x_1}{D\times G}, \dots, \mbof{x_k}{D\times G},
      \mbof{x_{k+1}}{D\times G},
      \mbof{\underline{\sigma}}{G^{q-k-1}},\mbof{\underline{y}}{(D\times G)^p})
      = g_k(x_1,\dots, x_{k-1}, x_kx_{k+1}^*, (x_{k+1})_G, \underline{\sigma},
      \underline{y}), \]
    which by induction hypothesis is exactly
    \[x_1^* \dots x_{k-1}^* \cdot (x_k x_{k+1}^*)^*. u^\sharp((x_1)_G,\dots,
      (x_{k-1})_G, (x_kx_{k+1}^*)_G, (x_{k+1})_G,
      \underline{\sigma},\underline{y})\] As in our case $-^*$ and $-_G$ are
    homomorphisms with $-_G\circ -^*\equiv 1$, this shows the proposition.
  \end{proof}
\end{proposition}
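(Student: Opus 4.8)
The plan is to unwind the recursive definition of $g=g(u,0)$ from \ref{prop:cochain-ext-main-work} and to prove the closed formula by induction on the number of extension steps. The entire point is that, in the direct-product situation, the two structure maps $-^*$ and $-_G$ are genuine monoid homomorphisms satisfying $(z^*)_G=1$ and $(z^*)^*=z^*$, and it is precisely these identities that make the recursion collapse into a single product. In the general setting of \ref{prop:cochain-ext-main-work} these identities fail, which is why no such explicit formula is available there; so the proof is really an exploitation of the direct-product structure.

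First I would record a reformulation of $-^\sharp$ that will serve as the workhorse of the argument: straight from its definition in \ref{prop:sharp-basic-properties}, $u^\sharp$ depends only on the $G$-components of its first $q$ arguments and on the $D$-components of its last $p$ arguments, so
\[
  u^\sharp(z_1,\dots,z_q,z_1',\dots,z_p') = u^\sharp((z_1)_G,\dots,(z_q)_G, z_1'^*,\dots,z_p'^*).
\]
With this invariance the base case is immediate: specialising the definition of $g_1$ from \ref{prop:cochain-ext-main-work} to $f=0$ gives $g_1(x_1,\underline{\sigma},\underline{y}) = x_1^*.g_0((x_1)_G,\underline{\sigma},\underline{y})$, and since $g_0$ agrees with $u^\sharp$ on the relevant locus this is exactly $x_1^*.u^\sharp((x_1)_G,\underline{\sigma},\underline{y}) = x_1^*.u^\sharp(x_1,\underline{\sigma},\underline{y})$, matching the asserted formula for $k=1$.

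For the inductive step I would apply $\ext_0$, i.e.\ \ref{def:cochain-ext} with $f=0$, which turns the recursion into
\[
  g_{k+1}(x_1,\dots,x_{k+1},\underline{\sigma},\underline{y}) = g_k(x_1,\dots,x_{k-1}, x_kx_{k+1}^*, (x_{k+1})_G, \underline{\sigma},\underline{y}).
\]
Feeding in the induction hypothesis and using that $-^*$ is an idempotent homomorphism, so $(x_kx_{k+1}^*)^* = x_k^*x_{k+1}^*$, produces the prefactor $x_1^*\cdots x_{k+1}^*$ (here invoking that the $D\times G$-action respects products, so the accumulated prefactors combine by associativity of the action). Using that $-_G$ is a homomorphism with $(x_{k+1}^*)_G=1$, so $(x_kx_{k+1}^*)_G = (x_k)_G$, together with the $u^\sharp$-invariance above, collapses the argument list back to $u^\sharp(x_1,\dots,x_{k+1},\underline{\sigma},\underline{y})$. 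This yields exactly $x_1^*\cdots x_{k+1}^*.u^\sharp(x_1,\dots,x_{k+1},\underline{\sigma},\underline{y})$, closing the induction at $k=q$, where $g=g_q$.

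There is no deep obstacle: once the homomorphism identities for $-^*$ and $-_G$ are isolated, the argument is bookkeeping. The only thing requiring genuine care is keeping the action prefactor and the steadily shifting argument list in sync through each application of $\ext_0$, and confirming that the invariance of $u^\sharp$ is applicable at every stage, since it is what allows one to reinstate the original arguments $x_1,\dots,x_{k+1}$ after they have been mangled into their $-^*$- and $-_G$-components.
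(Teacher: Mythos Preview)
Your proposal is correct and follows essentially the same route as the paper: both record the invariance $u^\sharp(z_1,\dots,z_q,z_1',\dots,z_p') = u^\sharp((z_1)_G,\dots,(z_q)_G,z_1'^*,\dots,z_p'^*)$, verify the base case $k=1$ directly, and then run the induction via $g_{k+1}(\dots)=g_k(x_1,\dots,x_{k-1},x_kx_{k+1}^*,(x_{k+1})_G,\underline{\sigma},\underline{y})$, using that $-^*$ and $-_G$ are homomorphisms with $(-)_G\circ(-)^*\equiv 1$. If anything, you spell out the identities $(x_kx_{k+1}^*)^*=x_k^*x_{k+1}^*$ and $(x_kx_{k+1}^*)_G=(x_k)_G$ slightly more explicitly than the paper does.
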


\begin{corollary}\label{prop:cochain-lift-trivial-if-first-arguments-in-quotient}
  Let $u\in C^{p,q}$ and define $g=g(u,0)$ as in \ref{prop:cochain-ext-main-work}.
  Then
  \[ g(\mbof{\underline{x}}{(D\times G)^{q}},\mbof{\underline{y}}{(D\times
      G)^p}) = 0 \] if one of the first $q$ arguments lies in $D$.
  \begin{proof}
    Clear from \ref{prop:explicit-determination-of-ext-by-zero-for-products} and
    the definition of $-^\sharp$.
  \end{proof}
\end{corollary}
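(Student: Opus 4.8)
The final statement, \Cref{prop:cochain-lift-trivial-if-first-arguments-in-quotient}, asserts a vanishing property of the extension-by-zero cochain $g=g(u,0)$: it is zero as soon as one of its first $q$ arguments lies in $D$ (i.e. has trivial $G$-component). My plan is to deduce this directly from the explicit closed-form expression for $g$ furnished by the immediately preceding \Cref{prop:explicit-determination-of-ext-by-zero-for-products}, rather than re-running the inductive construction from \Cref{prop:cochain-ext-main-work}.

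First I would recall that \Cref{prop:explicit-determination-of-ext-by-zero-for-products} gives
\[
  g(x_1,\dots,x_q,\underline{y}) = x_1^*\cdots x_q^*\,.\,u^\sharp(x_1,\dots,x_q,\underline{y}),
\]
so the entire $g$-value is, up to the (invertible, hence harmless) scalar action of $x_1^*\cdots x_q^*$, governed by the single term $u^\sharp(x_1,\dots,x_q,\underline{y})$. Thus it suffices to show that this $u^\sharp$-value vanishes whenever some $x_i$ ($1\le i\le q$) lies in $D$, meaning $\pi_G(x_i)=1$. Here I would unwind the definition of $(-)^\sharp$ from \Cref{prop:sharp-basic-properties}: by construction,
\[
  u^\sharp(x_1,\dots,x_q,\underline{y}) = u(\pi_D(y_1),\dots,\pi_D(y_p))(\pi_G(x_1),\dots,\pi_G(x_q)),
\]
where the inner argument list feeds the $G$-components $\pi_G(x_1),\dots,\pi_G(x_q)$ into the cochain $u(\underline{\pi_D(y)})\in C^q(G,A)$.

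The key observation is then that if some $x_i$ with $1\le i\le q$ lies in $D$, then $\pi_G(x_i)=1$, so the $i$-th argument of the normalised cochain $u(\pi_D(y_1),\dots,\pi_D(y_p))\in C^q(G,A)$ is the identity of $G$. Since every cochain in $C^q(G,A)$ is normalised by the standing convention of \ref{sec:setup-topological-cochains}, the value is zero. Hence $u^\sharp(x_1,\dots,x_q,\underline{y})=0$, and therefore $g(x_1,\dots,x_q,\underline{y})=0$ as well. This is essentially the content flagged in the proof as ``Clear from \ref{prop:explicit-determination-of-ext-by-zero-for-products} and the definition of $-^\sharp$.''

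I anticipate no genuine obstacle here: the entire argument is a two-line unwinding of definitions once the closed form is in hand. The only point meriting a moment's care is making explicit which index convention is in play — that the \emph{first} $q$ slots of $g$ correspond precisely to the $G$-arguments $\pi_G(x_1),\dots,\pi_G(x_q)$ of $u^\sharp$ (the last $p$ slots carrying the $D$-data through $\pi_D(y_j)$) — so that ``one of the first $q$ arguments lies in $D$'' translates correctly into ``one of the first $q$ $G$-arguments of $u^\sharp$ is trivial.'' Once that bookkeeping is pinned down, normalisation of $u(\underline{\pi_D(y)})\in C^q(G,A)$ closes the argument.
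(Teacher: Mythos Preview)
Your proposal is correct and follows exactly the approach indicated in the paper: you invoke the closed form from \Cref{prop:explicit-determination-of-ext-by-zero-for-products} and then unwind the definition of $(-)^\sharp$ to see that normalisation of $u(\pi_D(\underline{y}))\in C^q(G,A)$ forces vanishing when some $\pi_G(x_i)=1$. There is nothing to add.
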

\begin{proposition}\label{prop:ext-by-zero-is-section-of-alpha}
  Let $u\in C^{p,q}$ and $g=g(u,0)\in I^pC^{p+q}$ its extension along $0$ from
  \ref{prop:cochain-ext-main-work}. Then
  $\alpha(g)=(0,\dots,0,\mbof{u}{C^{p,q}},0,\dots,0).$
  \begin{proof}
    As $g\in I^pC^{p+q},$ we have for all $p'<p$ \[ \alpha(g)^{p',
        p+q-p'}=r_{p'}(\shuffle_{p'}^{p+q}(g)) = r_{p'}(g)=0 \] by
    \ref{prop:shuffling-is-identity-if-deep-enough,prop:rp-trivial-if-too-deep}.
    By \ref{prop:cochain-ext-main-work,prop:shuffling-is-identity-if-deep-enough}, $\alpha(g)^{p,q}=u,$ so it
    remains to show that $\alpha(g)^{p', p+q-p'}=0$ for $p'>p,$ i.\,e., that \[
      \shuffle_{p'}^{p+q} g (\mbof{\underline{d}}{D^{p'}},
      \mbof{\underline{y}}{G^{p+q-p'}}) = 0.\] But the definition of the shuffle
    operator implies that this is the sum of values of the form \[ \pm
      g(\mbof{\underline{\gamma}}{(D\times G)^{p+q}})\] where at least $p'$
    arguments lie in $D.$ As $p'>p$, one of these arguments that lie in $D$ is
    in one of the first $q$ positions, so $g(\underline{\gamma})=0$ by
    \ref{prop:cochain-lift-trivial-if-first-arguments-in-quotient}.
  \end{proof}
\end{proposition}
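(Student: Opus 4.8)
The plan is to compute $\alpha(g)$ one component at a time. By definition $\alpha(g)^{p',p+q-p'}=r_{p'}(\shuffle_{p'}^{p+q}(g))$, so it suffices to prove that this component equals $u$ when $p'=p$ and vanishes otherwise. I would organise the verification into the three cases $p'<p$, $p'=p$, and $p'>p$, keeping in mind that throughout the filtration $I^\bullet$ is taken with respect to the submonoid $N=G$ of $D\times G$, and that $r_{p'}$ evaluates its argument on tuples whose first $p+q-p'$ entries lie in $G$ and whose last $p'$ entries lie in $D$.

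For $p'\leq p$ I would first reduce the shuffle to the identity. Since the filtration $I^\bullet C^{p+q}$ is decreasing, $g\in I^pC^{p+q}\subseteq I^{p'}C^{p+q}$, so \ref{prop:shuffling-is-identity-if-deep-enough} (applied with ambient monoid $D\times G$ and normal submonoid $N=G$) shows that $\shuffle_{p'}^{p+q}(g)$ coincides with $g$ precisely on the tuples whose first $p+q-p'$ entries lie in $G$ — that is, exactly on the tuples $r_{p'}$ plugs in. Hence $r_{p'}(\shuffle_{p'}^{p+q}(g))=r_{p'}(g)$. For $p'<p$ this is $0$ by \ref{prop:rp-trivial-if-too-deep}, and for $p'=p$ the second assertion of \ref{prop:cochain-ext-main-work} gives $r_p(g)=u$, which is the single nonzero entry of the claimed tuple.

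The remaining case $p'>p$ is where the real work sits, and I expect it to be the main obstacle. Here I would expand $r_{p'}(\shuffle_{p'}^{p+q}(g))$ as a signed sum $\sum_\phi\sgn(\phi)\,g(\gamma_1,\dots,\gamma_{p+q})$ over injective morphisms of ordered sets $\phi\colon\ceil{p'}\ra\ceil{p+q}$, in which the $p'$ arguments coming from $D$ land in the positions $\image\phi$ while the $p+q-p'$ arguments coming from $G$ — conjugated, but still with trivial $D$-component since $\pi_D$ is a homomorphism — land in $\image\phi^*$. The decisive point is a counting argument: because $p'>p$, the $p'$ many $D$-positions cannot all fit into the last $p$ slots $\{q+1,\dots,p+q\}$, so at least one of the first $q$ arguments of $g(\gamma_1,\dots,\gamma_{p+q})$ lies in $D$. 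By \ref{prop:cochain-lift-trivial-if-first-arguments-in-quotient} every such summand is zero, so the whole component vanishes. Combining the three cases gives $\alpha(g)=(0,\dots,0,u,0,\dots,0)$ as asserted.
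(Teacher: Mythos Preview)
Your proposal is correct and follows essentially the same route as the paper: the same three-case split on $p'$, the same reduction of the shuffle to the identity via \ref{prop:shuffling-is-identity-if-deep-enough} for $p'\le p$ (you make the inclusion $I^pC^{p+q}\subseteq I^{p'}C^{p+q}$ explicit, which the paper leaves implicit), and the same pigeonhole argument combined with \ref{prop:cochain-lift-trivial-if-first-arguments-in-quotient} for $p'>p$.
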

\begin{proposition}\label{prop:ext-by-zero-is-morphism}
  Extension along zero is a morphism of complexes $\total C^{\bullet,\bullet}\ra
  C^\bullet.$
  \begin{proof}
    We need to show the following: Let $u\in C^{p,q}$ with $\Delta u= v + w$
    with $v\in C^{p+1,q}$ and $w\in C^{p,q+1}$. Call their respective extensions
    along zero from \ref{prop:cochain-ext-main-work} \begin{align*}g&=g(u,0)\in I^pC^{p+q},\\
      h&=g(v,0)\in I^{p+1}C^{p+q+1}, \text{ and }\\ h'&=g(w, 0) \in I^pC^{p+q+1}.\end{align*}
    Then \[ \partial g = h+h'.\]

    Using \ref{prop:explicit-determination-of-ext-by-zero-for-products}, this is
    now a straight forward (albeit lengthy) calculation.
    First of all,
    \begin{align*}
      \partial g(\mbof{\underline{x}}{(D\times G)^{p+q+1}}) &= x_1 . g(x_2,\dots,x_{p+q+1}) + (-1)^{p+q+1} g(x_1,\dots, x_{p+q}) \\
                                                          &\quad\quad + \sum_{i=1}^{p+q} (-1)^i g(x_1,\dots, x_{i-1}, x_ix_{i+1}, x_{i+2},\dots, x_{p+q+1})\\
                                                          & = x_1\cdot x_2^*\dots x_{q+1}^* . u^\sharp(x_2,\dots, x_{q+1}, x_{q+2},\dots,x_{p+q+1})\\
                                                          &\quad\quad + (-1)^{p+q+1}(x_1\dots x_q)^* . u^\sharp (x_1,\dots, x_q, x_{q+1},\dots x_{p+q})\\
     \tag{$\Sigma$.1}                                                     &\quad\quad + \sum_{i=1}^{q} (-1)^i (x_1\dots x_{q+1})^* u^\sharp(x_1,\dots, x_i x_{i+1}, \dots, x_{p+q+1}) \\
    \tag{$\Sigma$.2}                                                      &\quad\quad + \sum_{i=q+1}^{p+q}(-1)^i (x_1\dots x_q)^* u^\sharp(x_1,\dots, x_{i-1}, x_i x_{i+1}, x_{i+2},\dots, x_{p+q+1} ) 
    \end{align*}
    Expanding $h$ we first get
    \begin{align*}
      h(\mbof{\underline{x}}{(D\times G)^{p+q+1}}) &= (x_1\dots x_q)^* v^\sharp(x_1,\dots, x_q, x_{q+1},\dots, x_{p+q+1}) \\
                                                   &= (x_1\dots x_q)^* v(x_{q+1}^*,\dots, x_{p+q+1}^*)((x_1)_G,\dots, (x_q)_G).
    \end{align*}
    We can furthermore express $(-1)^q v(x_{q+1}^*,\dots, x_{p+q+1}^*)((x_1)_G,\dots,
    (x_q)_G)$ as follows:
    \begin{align*}
      &(-1)^q v(x_{q+1}^*,\dots, x_{p+q+1}^*)((x_1)_G,\dots,
    (x_q)_G)\\
      &= x_{q+1}^* . u(x_{q+2}^*,\dots, x_{p+q+1}^*)((x_1)_G,\dots, (x_q)_G)\\
      &\quad\quad + (-1)^{p+1} u(x_{q+1}^*,\dots,x_{p+q}^*)((x_1)_G,\dots,(x_q)_G) \\
      &\quad\quad + \sum_{i=1}^p (-1)^i u(x_{q+1}^*,\dots, x_{q+i-1}^*, (x_{q+1}x_{q+i+1})^*, x_{q+i+2}^*,\dots, x_{p+q+1}^*)((x_1)_G,\dots, (x_q)_G)\\
      &= x_{q+1}^* . u^\sharp(x_1,\dots, x_q,x_{q+2},\dots, x_{p+q+1})\\
      &\quad\quad + (-1)^{p+1} u^\sharp(x_1,\dots,x_q,x_{q+1},\dots,x_{p+q})\\
      &\quad\quad + \sum_{i=1}^p (-1)^i u^\sharp(x_1,\dots, x_{q+i-1}, x_{q+i}x_{q+i+1}, x_{q+i+2},\dots, x_{p+q+1}).
    \end{align*}
    On the other hand,
    \begin{align*}
      h'(\mbof{\underline{x}}{(D\times G)^{p+q+1}}) &= (x_1\dots x_{q+1})^* w^\sharp(x_1,\dots, x_{q+1}, x_{q+2},\dots,x_{p+q+1})\\
                                                    &= (x_1\dots x_{q+1})^* w(x_{q+2}^*,\dots, x_{p+q+1}^*)((x_1)_G,\dots, (x_{q+1})_G),
    \end{align*}
    and we can express $w(x_{q+2}^*,\dots, x_{p+q+1}^*)((x_1)_G,\dots,
    (x_{q+1})_G)$ as follows:
    \begin{align*}
      & w(x_{q+2}^*,\dots, x_{p+q+1}^*)((x_1)_G,\dots,
      (x_{q+1})_G)\\
      &= (x_1)_G . u(x_{q+2}^*,\dots, x_{p+q+1}^*)((x_2)_G,\dots, (x_{q+1})_G)\\
      &\quad\quad + (-1)^{q+1} u(x_{q+2}^*,\dots, x_{p+q+1}^*)((x_1)_G,\dots, (x_{q})_G)\\
      &\quad\quad + \sum_{i=1}^q (-1)^{i} u(x_{q+2}^*,\dots, x_{p+q+1}^*)((x_1)_G,\dots, (x_{i-1})_G, (x_ix_{i+1})_G, x_{i+2},\dots, (x_{q+1})_G)\\
    &= (x_1)_G . u^\sharp(x_2,\dots, x_{q+1},x_{q+2},\dots, x_{p+q+1})\\
      &\quad\quad + (-1)^{q+1} u^\sharp(x_1,\dots, x_{q},x_{q+2},\dots, x_{p+q+1})\\
      &\quad\quad + \sum_{i=1}^q (-1)^{i} u^\sharp(x_1,\dots, x_{i-1}, x_ix_{i+1}, x_{i+2},\dots, x_{q+1},x_{q+2},\dots, x_{p+q+1}).
    \end{align*}
    We see at once that ($\Sigma$.1) appears in our expansion of $h'$ and that
    ($\Sigma$.2) appears in our expansion of $h$. The remaining terms are as follows:
    \begin{align*}
      & (\partial g - h -h')(\mbof{\underline{x}}{(D\times G)^{p+q+1}}) \\
      & \tag{$\star$.1} = x_1\cdot x_2^*\dots x_{q+1}^* . u^\sharp(x_2,\dots, x_{p+q+1})\\
      \tag{$\star$.2}                                                              &\quad\quad + (-1)^{p+q+1}(x_1\dots x_q)^* . u^\sharp (x_1,\dots,x_{p+q})\\
      \tag{$\star$.3}                                                             &\quad\quad - (-1)^q (x_1\dots x_q)^* x_{q+1}^* u^\sharp(x_1,\dots,x_q, x_{q+2},\dots, x_{p+q+1})\\
      \tag{$\star$.4}                                              &\quad\quad - (-1)^{p+q+1} (x_1\dots x_q)^* u^\sharp(x_1,\dots,x_{p+q}) \\
      \tag{$\star$.5}                               &\quad\quad - (x_1\dots x_{{q+1}})^* (x_1)_G u^\sharp(x_2,\dots,x_{p+q+1}) \\
      \tag{$\star$.6}                &\quad\quad - (-1)^{q+1}(x_1\dots x_{{q+1}})^* u^\sharp(x_1,\dots,x_q,x_{q+2},\dots,x_{p+q+1}) .
    \end{align*}
    By construction, \[ x_1\cdot x_2^*\dots x_{q+1}^* = (x_1)_G\cdot x_1^*\dots
      x_{q+1}^* = (x_1\dots x_{{q+1}})^* (x_1)_G, \] so ($\star$.1) and
    ($\star$.5) cancel. Also ($\star$.2) and ($\star$.4) cancel, as do
    ($\star$.3) and ($\star$.6).
  \end{proof}
\end{proposition}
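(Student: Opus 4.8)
The plan is to check directly that extension along zero, which I write as $\epsilon\colon C^{p,q}\to I^pC^{p+q}\subseteq C^{p+q}$, commutes with the differentials. Fix $u\in C^{p,q}$ and put $g=\epsilon(u)$. Since the total differential is $\Delta=\partial+(-1)^q\delta$, with $\partial u\in C^{p,q+1}$ and $(-1)^q\delta u\in C^{p+1,q}$, the chain-map property is precisely the single identity
\[ \partial g = h + h',\qquad h=\epsilon\bigl((-1)^q\delta u\bigr),\quad h'=\epsilon(\partial u), \]
which is the equality to be established. All three cochains are given in closed form by \ref{prop:explicit-determination-of-ext-by-zero-for-products}, namely $g(\underline{x})=x_1^*\cdots x_q^*.u^\sharp(\underline{x})$ and likewise for $h$ and $h'$, so the problem reduces to unfolding these formulas against the inhomogeneous coboundary operator.

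First I would expand $\partial g$ on an argument $(x_1,\dots,x_{p+q+1})\in(D\times G)^{p+q+1}$ via the coboundary formula of \ref{prop:coboundary-operator-well-defined}, substituting the closed form of $g$. The alternating sum of merge terms splits according to whether the merged index $i$ lies in the group block $\{1,\dots,q\}$ or the monoid block $\{q+1,\dots,p+q\}$, since the prefactor $x_1^*\cdots x_q^*$ loses a factor exactly when two group-block arguments are merged. Next I would expand $h$ and $h'$, each of which requires unfolding the inner double-complex differential ($\delta$ inside $v^\sharp$ for $h$, $\partial$ inside $w^\sharp$ for $h'$); here one uses that $D$ is central, so the conjugation appearing in $\delta$ acts trivially on the $G$-arguments. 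The monoid-block merges produced by $h$ reproduce the interior terms of $\partial g$ with $i\geq q+1$, while the group-block merges produced by $h'$ reproduce those with $i\leq q$, and I would match these pair by pair, tracking signs.

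The genuinely delicate points are the three boundary contributions of $\partial g$: the leading action term $x_1.g(x_2,\dots)$, the tail term $(-1)^{p+q+1}g(x_1,\dots,x_{p+q})$, and the single merge straddling the two blocks (at $i=q$). Reconciling them with the matching terms of $h$ and $h'$ rests on the homomorphism identity
\[ x_1\cdot x_2^*\cdots x_{q+1}^* = (x_1)_G\cdot x_1^*\cdots x_{q+1}^* = (x_1\cdots x_{q+1})^*\,(x_1)_G, \]
which holds because in the direct product $D\times G$ both $(-)^*$ and $(-)_G$ are monoid homomorphisms with $(-)_G\circ(-)^*\equiv 1$. Using it, the leading term of $\partial g$ cancels the $(x_1)_G.u^\sharp(x_2,\dots)$ contribution from $h'$, the tail term cancels its counterpart in $h$, and the straddling merge cancels the remaining boundary term.

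The main obstacle is thus purely combinatorial: it is the bookkeeping of the action prefactor $x_1^*\cdots x_q^*$ and the systematic passage between an argument $x_i$ and its components $x_i^*\in D$, $(x_i)_G\in G$ as indices cross the block boundary under merging. Once the explicit formula and the homomorphism identity are in hand, every surviving summand of $\partial g$ is paired with exactly one summand of $h$ or $h'$, and since $u\in C^{p,q}$ was arbitrary this yields $\partial g = h + h'$ for all bidegrees, establishing that $\epsilon$ is a morphism of complexes.
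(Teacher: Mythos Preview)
Your proposal is correct and follows essentially the same route as the paper: reduce to $\partial g = h+h'$ via the closed formula of \ref{prop:explicit-determination-of-ext-by-zero-for-products}, split the coboundary sum into the $G$-block and $D$-block merges, match these against the interior sums of $h'$ and $h$ respectively, and settle the leftovers with the identity $x_1\cdot x_2^*\cdots x_{q+1}^* = (x_1\cdots x_{q+1})^*(x_1)_G$. One small correction to your bookkeeping: the merge at $i=q$ is not a separate boundary term---it is absorbed in the $G$-block sum and matches the $i=q$ summand of $h'$---whereas the third surviving pair that needs cancellation actually consists of the leading $x_{q+1}^*$-term from $h$ against the $(-1)^{q+1}$-term from $h'$; but this you will see immediately once you write the six residual terms out.
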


Armed with this, we are now ready to prove the main result of this section.

\begin{proof}[Proof of \ref{prop:quasi-iso-in-direct-product-case}]
  \label{proof:quasi-iso-in-direct-product-case}
  Indeed $\alpha$ is the required quasi-isomorphism
  \[\alpha\colon C^\bullet\oset{\approx}{\ra}\total C^{\bullet,\bullet}.\]
  Surjectivity on the level of cohomology follows immediately from
  \ref{prop:ext-by-zero-is-morphism,prop:ext-by-zero-is-section-of-alpha}.

  It remains to see that $\alpha$ is injective on cohomology. For this matter,
  take $f\in C^{n}$ with $\partial f=0$ and $\alpha(f)=\Delta(u)$ for some $u\in
  (\total C^{\bullet,\bullet})^{n-1}$. Write $\alpha(f) = (f^{p,q})_{p,q}\in
  \bigoplus_{p+q=n} C^{p,q}$.  We will now modify $f$ step
  by step by elements of $\partial(C^{n-1})$ such that it lies in higher and
  higher $I^pC^n$ until it lies in $I^{n+1}C^n=0$, i.\,e., $f$ is
  cohomologous to zero.

  Let $\widetilde{f}\in C^n$ and $\widetilde{u}\in (\total
  C^{\bullet,\bullet})^{n-1}$. We call the tuple $(\widetilde{f},\widetilde{u})$
  better than $(f,u)$ at $p$ if the following hold:
  \begin{enumerate}
  \item $f-\widetilde{f}\in \partial(C^{n-1})$,
  \item $\widetilde{f}\in I^pC^n$,
  \item $\alpha(\widetilde{f})=\Delta(\widetilde{u})$, and
  \item $\widetilde{u}^{k,n-1-k}=0$ for $k<p$.
  \end{enumerate}

  We will inductively construct an $\widetilde{f}\in C^n,$ such that
  $(\widetilde{f},0)$ is better than $(f,u)$ at $n$. We will afterwards show
  that this $\widetilde{f}$ is already zero and hence $f\in \partial(C^{n-1})$.
  
  Obviously $(f,u)$ itself is better than $(f,u)$ at $0$. If
  $(\overline{f},\overline{u})$ is better than $(f,u)$ at $p$, we construct a
  tuple $(\widetilde{f},\widetilde{u})$ which is better than $(f,u)$ at $p+1$ as
  follows:
  Note that analogously to \ref{prop:ext-by-zero-is-section-of-alpha}, by
  \ref{prop:shuffling-is-identity-if-deep-enough,prop:rp-trivial-if-too-deep}, 
  $\overline{f}^{p',q}=0$ for all $p'<p$ and
  $\overline{f}^{p,n-p}=r_p(\overline{f})$.
  By assumption, \[r_p(\overline{f})=\partial \overline{u}^{p,n-1-p}.\]
  If $p\leq n-2$, we can do the following: By
  \ref{prop:cochain-ext-main-work} (with $u=\overline{u}^{p,n-p-1},
  f=\overline{f}, p=p, q=n-p$) we find $g\in I^pC^{n-1}$ with the following
  properties:
  \begin{enumerate}
  \item $\alpha(g)^{p',n-1-p'} = \overline{u}^{p',n-1-p'}$ for all $p'\leq p$,
  \item $\overline{f}-\partial(g)\in I^{p+1}C^n$.
  \end{enumerate}
  Note that for \ref{prop:cochain-ext-main-work} to be applicable, we need the
  assumption that $p\leq n-2.$
  
  Now set $\widetilde{f}=\overline{f}-\partial(g)$ and
  $\widetilde{u}=\overline{u}-\alpha(g).$ To show that $(\widetilde{f},\widetilde{u})$ is
  better than $(f,u)$ at $p+1$, we only have to show
  that \[\alpha(\widetilde{f})=\Delta(\widetilde{u}),\] but this is straight
  forward:

  \[ \alpha(\widetilde{f}) = \alpha(\overline f) - \alpha(\partial g) =
    \Delta(\overline u) - \Delta(\alpha(g)) = \Delta(\widetilde{u}).\]

  Repeating this process, we get a tuple $(\overline{f},\overline{u})$, which is
  better than $(f,u)$ at $n-1$, so \[ \overline{u} = 0 \oplus 0 \oplus \dots
    \oplus 0 \oplus \overline{u}^{n-1,0} \] and \[ \alpha(\overline{f}) = 0
    \oplus \dots \oplus 0 \oplus \overline{f}^{n-1,1} \oplus
    \overline{f}^{n,0}.\] Now set $g=(\overline{u}^{n-1,0})^\sharp \in
  I^{n-1}C^{n-1}(D\times G,A), \widetilde{f} = \overline{f}-\partial(g),
  \widetilde{u}=\overline{u}-\alpha(g).$ (Note that by construction,
  $\widetilde{u}=0.$)

  It is immediately clear that \[ \alpha(\widetilde{f}) = \Delta(\widetilde{u})
    = 0.\] To see that $(\widetilde{f},0)$ is better than $(f,u)$ at $n$, it
  only remains to show that $\widetilde{f}\in I^nC^n.$ As it is clear from the
  construction that $\widetilde{f}\in I^{n-1}C^n,$ we only need to show
  that \[\widetilde{f}\left(
      (\mbof{d}{D},\mbof{\sigma}{G}),\mbof{\underline{x}}{(D\times
        G)^{n-1}}\right) = \widetilde{f}((d,1), \underline{x}).\]

  The equality $\overline{f}^{n-1,1} = \partial \overline{u}^{n-1,0}$ (together
  with $\overline{f}\in I^{n-1}C^n$) implies
  \begin{align*}\overline{f}((1,\mbof{\sigma}{G}),
    \underline{x}) &= r_{n-1}(\overline{f})(\pi_D(\underline{x}))(\sigma)\\
                   &= \overline{f}^{n-1,1}(\pi_D(\underline{x}))(\sigma)\\
                   &= \partial(\overline{u}^{n-1,0}(\pi_D(\underline{x})))(\sigma)\\
                   &= \sigma
                     \overline{u}^{n-1,0}(\pi_D(\underline{x})) - \overline{u}^{n-1,0}(\pi_D(\underline{x}))\\
                   &= (1,\sigma) (\overline{u}^{n-1,0})^\sharp(\underline{x}) - (\overline{u}^{n-1,0})^\sharp(\underline{x}).
  \end{align*}

  As $\partial(\overline{f})=0$ and $\overline{f}\in I^{n-1}C^n,$ the
  coboundary expansion of $\partial\overline{f}((d,1),(1,\sigma),\underline{x})=0$
  yields
  \begin{align*}
    \overline{f}((d,\sigma),\underline{x}) &=
    (d,1)\overline{f}((1,\sigma),\underline{x}) +
    \overline{f}((d,1),\underline{x})\\ & =
                                          (d,1)\overline{f}((1,\sigma),\underline{x}) +\text{ terms independent of } \sigma
  \end{align*}
  and analogously \[
    \partial(g)((d,\sigma),\underline{x}) = (d,\sigma)g(\underline{x}) + \text{
      terms independent of } \sigma.\]
  We can hence compute:
  \begin{align*}
    \widetilde{f}((d,\sigma), \underline{x}) &= (\overline{f}-\partial(g))((d,\sigma), \underline{x})\\
    & = (d,1)\overline{f}((1,\sigma),\underline{x}) - (d,\sigma)g(\underline{x}) + \text{ terms independent of } \sigma\\
                                             &= (d,\sigma)(\overline{u}^{n-1,0})^\sharp(\underline{x}) - (d,1)(\overline{u}^{n-1,0})^\sharp(\underline{x}) - (d,\sigma)(\overline{u}^{n-1,0})^\sharp(\underline{x})\\
    & \quad\quad + \text{ terms independent of } \sigma,
  \end{align*}
  which is independent of $\sigma$, hence $\widetilde{f}\in I^nC^n.$

  We conclude the proof by showing that if $\widetilde{f}\in
  I^nC^n\cap\ker\alpha\cap\ker\partial,$ then $\widetilde{f}\in\partial
  C^{n-1}.$ But indeed such an $\widetilde{f}\in I^nC^n\cap\ker\alpha$ is already zero:
  \begin{align*}
    \widetilde{f}((\mbof{d_1}{D},\mbof{x_1}{G}), \dots, (\mbof{d_n}{D},\mbof{x_n}{G})) &= \widetilde{f}((d_1,1),\dots, (d_n,1))\\
                                                                                       &= r_n(\widetilde{f})(d_1,\dots,d_n)\\
& = \alpha(\widetilde{f})^{n,0}(d_1,\dots,d_n) = 0.
  \end{align*}
\end{proof}

\subsection{On a theorem of Jannsen}

The main result of \cite{MR1060372} also has a variant in the topological
setting.

We first recall the following result:

\begin{proposition}[{\cite[(2.3.4)]{MR2392026}}]
  \label{prop:total-complex-tensor-product-splitting}
  Let $C^\bullet, D^\bullet$ be complexes of modules over a Dedekind domain $R$.
  Assume that both complexes are bounded in the same direction or that one of
  them is bounded above and below. If $C^\bullet$ consists of flat $R$-modules,
  then there is a non-canonical splitting \[ H^n(\total C^\bullet\otimes_R
    D^\bullet) \cong \bigoplus_{p+q=r} ss(C^\bullet\otimes_R
    D^\bullet)_2^{p,q},\] where $ss(C^\bullet\otimes_R D^\bullet)_2^{p,q}$
  denotes the $E_2$-terms of the spectral sequence attached to the double
  complex (cf.~e.\,g.~\cite[(2.2.3)]{MR2392026} for details).
\end{proposition}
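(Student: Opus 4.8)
Since the statement is quoted verbatim from \cite[(2.3.4)]{MR2392026}, the plan is essentially to recall the argument given there, which I would organise around one structural fact: a Dedekind domain $R$ is hereditary, so $\Ext^i_R(-,-)=0$ for $i\geq 2$, and consequently every suitably bounded complex $K^\bullet$ of $R$-modules is \emph{formal}, i.e.\ isomorphic in the derived category $\mathrm{D}(R)$ to $\bigoplus_i H^i(K^\bullet)[-i]$. The first step is to prove this formality statement by peeling off cohomology one degree at a time: if $i$ is the lowest degree in which $K^\bullet$ has cohomology, the truncation triangle $H^i(K^\bullet)[-i]\to K^\bullet\to\tau_{>i}K^\bullet\xrightarrow{\delta}H^i(K^\bullet)[-i+1]$ has connecting morphism $\delta\in\Hom_{\mathrm{D}(R)}(\tau_{>i}K^\bullet,H^i(K^\bullet)[-i+1])$, and the hyper-$\Ext$ spectral sequence exhibits this group as built from terms $\Ext^s_R(H^t(\tau_{>i}K^\bullet),H^i(K^\bullet))$ with $s\geq 2$, hence it vanishes; the triangle therefore splits, and induction (using the boundedness hypothesis) gives the decomposition.

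Granting formality, the second step is a direct computation in $\mathrm{D}(R)$. Because $C^\bullet$ consists of flat modules, $\total(C^\bullet\otimes_R D^\bullet)$ represents $C^\bullet\otimes^{\Lder}_R D^\bullet$; decomposing $C^\bullet\cong\bigoplus_p H^p(C^\bullet)[-p]$ yields
\[
  C^\bullet\otimes^{\Lder}_R D^\bullet\;\cong\;\bigoplus_p\bigl(H^p(C^\bullet)\otimes^{\Lder}_R D^\bullet\bigr)[-p].
\]
Each $H^p(C^\bullet)\otimes^{\Lder}_R D^\bullet$ is again a (suitably bounded) complex of $R$-modules, hence formal, and by the universal coefficient theorem its cohomology in degree $m$ is the direct sum of $H^p(C^\bullet)\otimes_R H^m(D^\bullet)$ and $\Tor^R_1(H^p(C^\bullet),H^{m+1}(D^\bullet))$, with no other contributions. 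Taking $H^n$ of the displayed decomposition therefore produces an \emph{honest} direct sum
\[
  H^n\bigl(\total(C^\bullet\otimes_R D^\bullet)\bigr)\;\cong\;\bigoplus_{p+q=n}H^p(C^\bullet)\otimes_R H^q(D^\bullet)\;\oplus\;\bigoplus_{p+q=n+1}\Tor^R_1\bigl(H^p(C^\bullet),H^q(D^\bullet)\bigr),
\]
so in particular the spectral sequence degenerates at $E_2$ and the induced filtration on $H^n$ splits.

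The third step is to recognise the right-hand side as $\bigoplus_{p+q=n}ss(C^\bullet\otimes_R D^\bullet)_2^{p,q}$. By the convention fixed in \cite[(2.2.3)]{MR2392026} one has $ss(C^\bullet\otimes_R D^\bullet)_2^{p,q}=H^p\bigl(C^\bullet\otimes_R H^q(D^\bullet)\bigr)$, and applying formality once more to the flat complex $C^\bullet$ (now tensored with the single module $H^q(D^\bullet)$) together with the universal coefficient theorem gives $ss(C^\bullet\otimes_R D^\bullet)_2^{p,q}\cong\bigl(H^p(C^\bullet)\otimes_R H^q(D^\bullet)\bigr)\oplus\Tor^R_1\bigl(H^{p+1}(C^\bullet),H^q(D^\bullet)\bigr)$. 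Summing over $p+q=n$ and reindexing the $\Tor$-summands ($p'=p+1$) reproduces exactly the decomposition above, finishing the proof; the isomorphism is non-canonical precisely because the formality splittings used are. I expect the only genuine obstacle to be the formality statement itself, together with the care needed to match the boundedness hypotheses so that $\otimes_R$ computes $\otimes^{\Lder}_R$, all complexes in sight are formal, and the spectral sequence converges; everything after that is bookkeeping.
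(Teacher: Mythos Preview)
The paper does not prove this proposition at all; it is simply quoted from \cite[(2.3.4)]{MR2392026} and used as a black box in the next result. So there is nothing to compare against on the paper's side.

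Your sketch is essentially the standard argument behind the cited result: over a hereditary ring every suitably bounded complex is formal, and once you have formality the splitting of $H^n(\total(C^\bullet\otimes_R D^\bullet))$ into the $E_2$-terms is bookkeeping with the universal coefficient theorem. The truncation argument for formality is correct (the connecting map lands in a group whose associated graded consists of $\Ext^{\geq 2}$-groups, which vanish), and your identification of the $E_2$-terms via $ss^{p,q}_2\cong (H^p(C^\bullet)\otimes_R H^q(D^\bullet))\oplus\Tor^R_1(H^{p+1}(C^\bullet),H^q(D^\bullet))$ together with the reindexing is right. The only places requiring care are exactly the ones you flag: that the boundedness hypotheses guarantee convergence of the spectral sequences and that the flat complex $C^\bullet$ really computes the derived tensor product.
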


\begin{proposition}\label{prop:cohomology-of-double-complex-with-trivial-outer-action}
  If $\forget D$ is finite and acts trivially on $A$, then \[ H^n(D\times
    G,A) \cong \bigoplus_{p+q=n} H^p(D,H^q(G,A)).\]
  \begin{proof}
    By \ref{prop:quasi-iso-in-direct-product-case} it suffices to show
    that \[C^{\bullet,\bullet}\cong C^{\bullet}(D,\Z)\otimes C^\bullet(G,A)\] as
    double complexes, as we can then employ
    \ref{prop:total-complex-tensor-product-splitting} to get the desired result.
    As $D$ is
    finite, it is clear that
    \begin{align*}
       C^p(D,\Z)\otimes C^q(G,A) &\ra C^p(D,C^q(G,A)) \\
       (f, g) &\mapsto \left(
         \mbof{\underline{d}}{D^p} \mapsto \left( \mbof{\underline{x}}{G^q}
           \mapsto f(\underline{d})g(\underline{x}) \right) \right)
    \end{align*}
    is bijective and it is easily verified that it commutes with differentials.
  \end{proof}
\end{proposition}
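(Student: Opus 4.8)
The plan is to combine the double-complex comparison of \ref{prop:quasi-iso-in-direct-product-case} with the purely algebraic splitting \ref{prop:total-complex-tensor-product-splitting}. First I would invoke \ref{prop:quasi-iso-in-direct-product-case} to replace $C^\bullet(D\times G,A)$ by $\total C^{\bullet,\bullet}$ with $C^{p,q}=C^p(D,C^q(G,A))$, so that $H^n(D\times G,A)\cong H^n(\total C^{\bullet,\bullet})$. The observation that makes everything collapse nicely is that the conjugation action of $\forget D$ on $C^q(G,A)$ from \ref{prop:direct-product-yields-module-structure-on-cochains} is \emph{trivial}: since $D$ is central in $D\times G$ and acts trivially on $A$, one has $(d.f)(\underline{x})=d.f(d^{-1}\underline{x}d)=f(\underline{x})$ for $d\in\forget D$. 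In particular $\forget D$ also acts trivially on $H^q(G,A)$.

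Next I would produce an isomorphism of double complexes $C^\bullet(D,\Z)\otimes_\Z C^\bullet(G,A)\cong C^{\bullet,\bullet}$. Since $\forget D$ is finite, $C^p(D,\Z)$ is a free $\Z$-module of finite rank, so the natural map $C^p(D,\Z)\otimes_\Z C^q(G,A)\to C^p(D,C^q(G,A))$ sending $f\otimes g$ to $\mbof{\underline{d}}{D^p}\mapsto(\mbof{\underline{x}}{G^q}\mapsto f(\underline{d})g(\underline{x}))$ is bijective — it is finiteness of $D^p$ that forces surjectivity — and because the $D$-action on $C^q(G,A)$ is trivial, the $\delta$-differential on $C^\bullet(D,C^q(G,A))$ is precisely the one pulled back from $C^\bullet(D,\Z)$, so this map is compatible with both differentials.

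Finally I would apply \ref{prop:total-complex-tensor-product-splitting} over $R=\Z$ with $C^\bullet=C^\bullet(D,\Z)$, a complex of free and hence flat $\Z$-modules, and $D^\bullet=C^\bullet(G,A)$; both complexes are bounded below, so the hypotheses hold. This yields a non-canonical isomorphism $H^n(\total C^{\bullet,\bullet})\cong\bigoplus_{p+q=n}ss(C^\bullet(D,\Z)\otimes_\Z C^\bullet(G,A))_2^{p,q}$, and it only remains to identify the $E_2$-terms. Since each $C^p(D,\Z)$ is flat, taking cohomology first in the $C^\bullet(G,A)$-direction gives $E_1^{p,q}=C^p(D,\Z)\otimes_\Z H^q(G,A)$ and then $E_2^{p,q}=H^p(C^\bullet(D,\Z)\otimes_\Z H^q(G,A))$; finiteness of $\forget D$ once more identifies $C^\bullet(D,\Z)\otimes_\Z M$ with $C^\bullet(D,M)$ for the trivial $D$-module $M=H^q(G,A)$, so $E_2^{p,q}=H^p(D,H^q(G,A))$, and the claim follows.

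The individual steps are routine; the one point I expect to need care is the last paragraph, namely pinning down which of the two spectral sequences of the double complex appears in \ref{prop:total-complex-tensor-product-splitting} and checking that its $E_2$-page is genuinely $H^p(D,H^q(G,A))$ with no stray $\Ext^1_\Z$-correction. The resolution is that any universal-coefficient correction is already absorbed into the group cohomology $H^p(D,-)$ of the possibly non-flat coefficient module $H^q(G,A)$, so no extra summands arise.
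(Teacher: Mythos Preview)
Your proof is correct and follows essentially the same approach as the paper: reduce via \ref{prop:quasi-iso-in-direct-product-case}, identify $C^{\bullet,\bullet}$ with the tensor double complex $C^\bullet(D,\Z)\otimes_\Z C^\bullet(G,A)$ using finiteness of $D$ and triviality of the action, and then invoke \ref{prop:total-complex-tensor-product-splitting}. You are in fact more explicit than the paper in justifying why the differentials match and in identifying the $E_2$-page; your handling of the potential $\Ext^1_\Z$-worry is also correct and not a genuine gap.
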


The assumption of finite $D$ is regrettably crucial in the proof. In
\cite{MR1060372} the case of \emph{compact} (but not necessarily discrete) $D$
and discrete $A$ is considered. Every morphism $D\sra A$ then has finite image,
which induces the isomorphism above.

However, for
the easiest monoids we also have the following:

\begin{proposition}
  If $D\cong \N_0^r$ (or $D\cong\Z^r$) acts trivially on $A$, then \[
    H^n(D\times G,A) \cong \bigoplus_{k=0}^r H^{n-k}(G,A)^{\oplus
      \binom{r}{k}}.\]
  \begin{proof}
    It suffices to show the proposition for $r=1,$ as the general case then
    follows by induction. By
    \ref{rmk:total-complex-single-operator,prop:quasi-iso-in-direct-product-case} \[
      C^\bullet(D\times G, A) \cong \total\left( C^\bullet(G,A)\oset{0}{\ra}
        C^{\bullet}(G,A) \right) \cong C^\bullet(G,A)\oplus
      C^{\bullet-1}(G,A),\]
    so \[ H^n(D\times G,A) \cong H^n(G,A)\oplus H^{n-1}(G,A).\]
  \end{proof}
\end{proposition}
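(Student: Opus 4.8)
The plan is to bootstrap from the two results \ref{prop:quasi-iso-in-direct-product-case} and \ref{rmk:total-complex-single-operator}, treating $r=1$ as the base case and then inducting on $r$; the displayed formula is afterwards pure bookkeeping via Pascal's rule.

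First I would dispose of $r=1$. Since $D\cong\N_0$ (resp.\ $\Z$) acts trivially on $A$, the single operator $\varphi$ implementing the action is $\id_A$, so $\varphi-1=0$. Feeding this into \ref{rmk:total-complex-single-operator} and combining with \ref{prop:quasi-iso-in-direct-product-case} gives, in the derived category of abelian groups,
\[
  C^\bullet(D\times G, A) \cong \total\left( C^\bullet(G,A)\oset{0}{\ra} C^{\bullet}(G,A) \right).
\]
The total complex of a two-step complex of complexes with vanishing connecting map is, on the nose, the direct sum $C^\bullet(G,A)\oplus C^{\bullet-1}(G,A)$, so $H^n(D\times G,A)\cong H^n(G,A)\oplus H^{n-1}(G,A)$, which is the asserted formula since $\binom 10=\binom 11=1$.

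For the inductive step I would write $D\cong\N_0^{r-1}\times\N_0$ (resp.\ with $\Z$) and regard $D\times G$ as $\N_0^{r-1}\times(\N_0\times G)$. Here $\N_0\times G$ is again a topologised monoid of the shape required in \ref{sec:setup-topological-cochains}: one absorbs the new $\N_0$-factor into the abelian monoid factors, and the discrete factor contributes trivially to the section $s$ and to the auxiliary morphisms $(-)^*,(-)_N$, so the standing hypotheses stay in force. The induction hypothesis, applied to the discrete abelian monoid $\N_0^{r-1}$ (acting trivially) and the monoid $\N_0\times G$, gives
\[
  H^n(D\times G,A)\cong \bigoplus_{k=0}^{r-1} H^{n-k}(\N_0\times G,A)^{\oplus\binom{r-1}{k}},
\]
and the $r=1$ case rewrites each $H^{n-k}(\N_0\times G,A)$ as $H^{n-k}(G,A)\oplus H^{n-k-1}(G,A)$. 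Expanding, reindexing the shifted contributions by $j=k+1$, and invoking $\binom{r-1}{j}+\binom{r-1}{j-1}=\binom rj$ produces $\bigoplus_{k=0}^r H^{n-k}(G,A)^{\oplus\binom rk}$.

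Since every nontrivial ingredient is quoted, there is no genuine obstacle here; the load-bearing work sits entirely in \ref{prop:quasi-iso-in-direct-product-case} and \ref{rmk:total-complex-single-operator}. The only point inside this proof deserving a moment's attention is the compatibility check just mentioned — that $\N_0\times G$ still satisfies all the requirements of \ref{sec:setup-topological-cochains} so that both cited results may be invoked at each stage of the induction — and this is immediate from the explicit product form of the monoids the setup admits. One could alternatively bypass the induction on the proposition altogether and use the $\N_0^r\times\Z^s$-version of \ref{rmk:total-complex-single-operator} directly: with all operators equal to $0$ the resulting iterated mapping cone degenerates to $\bigoplus_{k=0}^r C^{\bullet-k}(G,A)^{\oplus\binom rk}$, giving the same conclusion.
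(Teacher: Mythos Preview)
Your proof is correct and follows the paper's approach essentially verbatim: the $r=1$ case is handled exactly as in the paper via \ref{rmk:total-complex-single-operator} and \ref{prop:quasi-iso-in-direct-product-case}, and the paper likewise reduces the general case to $r=1$ by induction, merely stating that ``the general case then follows by induction'' without writing out the Pascal-rule bookkeeping that you supply.
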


\section{Shapiro's Lemma for Topologised Monoids}
\label{sec:shapiro-for-monoids}

The results of the previous section allow us to extend Shapiro's lemma to
monoids.

\begin{theorem}
  \label{prop:shapiro-top-mon}
  Let $\cat{C}$ be a topological category, $G$ a topologised group in $\cat{C}$
  and $D$ a discrete monoid. Let $H\leq G$ be a subgroup as in
  \ref{sec:shapiro-top-grps} and $A$ a rigidified $D\times H$-module with $D$
  being $A$-pliant. Then \[ C^\bullet(D\times G, \Ind^H_G(A)) \cong
    C^\bullet(D\times H, A)\] in the derived category of abelian groups.
  \begin{proof}
    Let us first note that $D$ is also
    $\Ind^H_G(A)$-pliant: We need to show that for every $X\in\cat{C}$ we have
    an equality \[ \Ind^H_G(A)(D\times X) = \Hom_{\cat{Set}}(\forget
      D,\Ind^H_G(A)(X)).\] As $D$ is $A$-pliant, $\Ind^H_G(A)(D\times X)$ are
    those maps in $\Hom_{\cat{Set}}(\forget D, h_A(X\times G))$ which are $H$-linear
    in the $G$-argument. But that is exactly $\Hom_{\cat{Set}}(\forget
    D,\Ind^H_G(A)(X))$. 

    We can hence use
    \ref{prop:quasi-iso-in-direct-product-case} to see that
    \[C^\bullet(D\times G, \Ind^H_G(A)) \cong \total
      C^\bullet(D,C^\bullet(G,\Ind^H_G(A))).\] By \ref{prop:c-quasiiso-x}
    we have a quasi-isomorphism \[\total
      C^\bullet(D,C^\bullet(G,\Ind^H_G(A)))\cong\total
      X^\bullet(D,C^\bullet(G,\Ind^H_G(A))).\] As $D$ is discrete,
    $X^\bullet(D,-)=\Hom_{\Z[D]}(F_\bullet, -)$, where $F_\bullet$ is a complex
    of free $\Z[D]$-modules, cf.~\ref{prop:free-resolution-of-z-is-resolution}.
    Thus $X^\bullet(D,-)$
    preserves quasi-iso\-morphisms. Using these arguments again, together
    with \ref{prop:shapiro-top-grps}, we arrive at quasi-isomorphisms 
   \[\total
      X^\bullet(D,C^\bullet(G,\Ind^H_G(A)))\cong\total
      X^\bullet(D,C^\bullet(H,A)) \cong \total C^\bullet(D,C^\bullet(H,A))\cong
      C^\bullet(D\times H,A).\] 
  \end{proof}
\end{theorem}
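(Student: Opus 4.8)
The plan is to reduce everything to the already-proven group case \ref{prop:shapiro-top-grps} by means of the double-complex comparison \ref{prop:quasi-iso-in-direct-product-case}. As a preliminary, one records that $\Ind^H_G(A)$ carries a $D\times G$-module structure with $\cat{C}$-rigidification — the $D$-action on $A$ commutes with $H$ and hence induces an action on $\Ind^H_G(A)$ commuting with the right-translation $G$-action — and, crucially, that $D$ is $\Ind^H_G(A)$-pliant. The latter is immediate from the definitions: for $X\in\cat{C}$, $\Ind^H_G(A)(D\times X)$ consists of those $f$ in $h_A(D\times X\times G)$ which are $H$-equivariant in the last slot, and since $D$ is $A$-pliant one has $h_A(D\times X\times G)=\Hom_{\cat{Set}}(\forget D, h_A(X\times G))$; intersecting with the equivariance condition gives exactly $\Hom_{\cat{Set}}(\forget D,\Ind^H_G(A)(X))$.

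Next, applying \ref{prop:quasi-iso-in-direct-product-case} to $D\times G$ acting on $\Ind^H_G(A)$ and, in the other direction, to $D\times H$ acting on $A$, yields quasi-isomorphisms
\[ C^\bullet(D\times G,\Ind^H_G(A))\;\cong\;\total C^\bullet\bigl(D,C^\bullet(G,\Ind^H_G(A))\bigr), \qquad \total C^\bullet\bigl(D,C^\bullet(H,A)\bigr)\;\cong\;C^\bullet(D\times H,A). \]
So it remains to compare $\total C^\bullet(D,C^\bullet(G,\Ind^H_G(A)))$ with $\total C^\bullet(D,C^\bullet(H,A))$. Since $D$ is discrete, the coefficient objects $C^\bullet(G,\Ind^H_G(A))$ and $C^\bullet(H,A)$ may be regarded as complexes of $\Z[D]$-modules, and I would first replace the normalised $D$-cochains by all inhomogeneous ones: by \ref{prop:c-quasiiso-x} the inclusion $C^\bullet(D,-)\hookrightarrow X^\bullet(D,-)$ is a quasi-isomorphism, so it suffices to compare $\total X^\bullet(D,C^\bullet(G,\Ind^H_G(A)))$ with $\total X^\bullet(D,C^\bullet(H,A))$.

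Now comes the actual input. Because $D$ is discrete, $X^\bullet(D,-)=\Hom_{\Z[D]}(F_\bullet,-)$ for the free resolution $F_\bullet$ of $\Z$ from \ref{prop:free-resolution-of-z-is-resolution}; since each $F_p$ is free, the functor $\Hom_{\Z[D]}(F_p,-)$ is exact, and as the resulting total complex has finite diagonals (it is bounded below in both indices) a column-wise quasi-isomorphism assembles to a quasi-isomorphism of totals. Hence $X^\bullet(D,-)$ carries quasi-isomorphisms of complexes of $\Z[D]$-modules to quasi-isomorphisms. The group-case Shapiro lemma \ref{prop:shapiro-top-grps} supplies chain maps $\alpha_\bullet\colon C^\bullet(G,\Ind^H_G(A))\to C^\bullet(H,A)$ and $\beta_\bullet$ in the reverse direction with chain homotopy $\kappa_\bullet$; by \ref{prop:direct-product-yields-module-structure-on-cochains} the $D$-action on these cochain complexes acts only on the coefficient values, while $\alpha_\bullet,\beta_\bullet,\kappa_\bullet$ merely substitute and permute $G$- and $H$-arguments, so all three are $\Z[D]$-linear. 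Thus $\alpha_\bullet$ is a quasi-isomorphism of complexes of $\Z[D]$-modules, applying $X^\bullet(D,-)$ gives the missing comparison, and chaining the displayed quasi-isomorphisms proves $C^\bullet(D\times G,\Ind^H_G(A))\cong C^\bullet(D\times H,A)$ in the derived category of abelian groups.

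The step I expect to be the main obstacle is the bookkeeping that makes the group-case Shapiro quasi-isomorphism genuinely $\Z[D]$-equivariant, combined with the elementary but indispensable remark that $\Hom_{\Z[D]}(F_\bullet,-)$ preserves quasi-isomorphisms because $F_\bullet$ consists of free modules; everything else is a formal composition of results from the preceding sections.
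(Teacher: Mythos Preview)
Your proposal is correct and follows essentially the same route as the paper: pliancy of $D$ for the induced module, the double-complex decomposition \ref{prop:quasi-iso-in-direct-product-case} on both sides, the passage from $C^\bullet(D,-)$ to $X^\bullet(D,-)$ via \ref{prop:c-quasiiso-x}, and the observation that $X^\bullet(D,-)=\Hom_{\Z[D]}(F_\bullet,-)$ with $F_\bullet$ free so that it preserves quasi-isomorphisms, then feeding in \ref{prop:shapiro-top-grps}. Your explicit verification that $\alpha_\bullet,\beta_\bullet,\kappa_\bullet$ are $\Z[D]$-linear is a detail the paper leaves implicit but which is indeed needed for the argument to go through.
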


\printbibliography
\vfill
{\hfill\footnotesize The \textsc{Bib}\TeX-entries for this bibliography were
  mostly taken from MathSciNet.}

\end{document}